\theoremstyle{plain} %--default
\newtheorem{theorem}    [subsection]{Theorem}
\newtheorem{lemma}      [subsection]{Lemma}
\newtheorem{corollary}  [subsection]{Corollary}
\newtheorem{proposition}[subsection]{Proposition}
\newcounter{custom}
\newtheorem{conjecture_custom}[custom]{Conjecture}
\newtheorem{conjecture_disproven}[custom]{Disproven Conjecture}
\theoremstyle{definition}
\newtheorem{definition} [subsection]{Definition}
\theoremstyle{remark}
\newtheorem{remark}[subsection]{Remark}
 \newcommand{\chern}[1]{\mathrm{c}_{#1}}
\def\Sym{\operatorname{Sym}}
\def\SO{\operatorname{SO}}
\def\OO{\operatorname{O}}
\def\id{\operatorname{id}}
\def\GL{\operatorname{GL}}
\def\Mon{\operatorname{Mon}}
\def\NE{\operatorname{NE}}
\def\Res{\operatorname{Res}}
\renewcommand{\P}{\mathbb{P}}
\newcommand{\Q}{\mathbb{Q}}
\newcommand{\Z}{\mathbb{Z}}
\newcommand{\C}{\mathbb{C}}
\renewcommand{\O}{\mathcal{O}}
\newcommand{\triv}{\mathbf{1}}
\newcommand{\pt}{[\mathrm{pt}]}
\renewcommand{\phi}{\varphi}
\newcommand{\into}{\rightarrow}
\newcommand{\euler}{\mathbf{e}}
\newcommand{\K}[1]{$K3^{[#1]}$}
\newcommand{\real}[1]{\ldots}
 \title{Lagrangian $4$-Planes in Holomorphic Symplectic Varieties of \K{4}-type}
 \date{\today}
 \author{Benjamin Bakker}
 \address{B. Bakker:
 Courant Institute of Mathematical Sciences, New York University,
 251 Mercer St., New York, NY 10012
 }
 \email{bakker@cims.nyu.edu}
 \author{Andrei Jorza}
 \address{A. Jorza:
 University of Notre Dame, 275 Hurley, Notre Dame, IN 46556}
 \email{ajorza@nd.edu}
\begin{document}

% \dedicatory{}

 \begin{abstract}
We classify the cohomology classes of Lagrangian 4-planes $\P^4$ in a smooth manifold $X$ deformation equivalent to a Hilbert scheme of 4 points on a $K3$ surface, up to the monodromy action.   Classically, the cone of effective curves on a $K3$ surface $S$ is generated by nonegative classes $C$, for which $(C,C)\geq0$, and nodal classes $C$, for which $(C,C)=-2$; Hassett and Tschinkel conjecture that the cone of effective curves on a holomorphic symplectic variety $X$ is similarly controlled by ``nodal'' classes $C$ such that $(C,C)=-\gamma$, for $(\cdot,\cdot)$ now the Beauville-Bogomolov form, where $\gamma$ classifies the geometry of the extremal contraction associated to $C$.  In particular, they conjecture that for $X$ deformation equivalent to a Hilbert scheme of $n$ points on a $K3$ surface, the class $C=\ell$ of a line in a smooth Lagrangian $n$-plane $\P^n$ must satisfy $(\ell,\ell)=-\frac{n+3}{2}$.  We prove the conjecture for $n=4$ by computing the ring of monodromy invariants on $X$, and showing there is a unique monodromy orbit of Lagrangian $4$-planes.
 \end{abstract}

\maketitle

%\tableofcontents

 \section{Introduction}
Let $X$ be an irreducible holomorphic symplectic variety; thus, $X$ is a smooth projective simply-connected variety whose space $H^0(\Omega^2_X)$ of global two-forms is generated by a nowhere degenerate form $\omega$.
$H^2(X,\Z)$ carries a deformation-invariant nondegenerate primitive
integral form $(\cdot,\cdot)$ called the Beauville-Bogomolov
form \cite{beauville}.  For $X=S$ a $K3$ surface $(\cdot,\cdot)$ is the
intersection form, while for $X=S^{[n]}$ a Hilbert scheme of $n>1$
points on $S$ we have the orthogonal decomposition \cite[$\mathsection8$]{beauville}
\begin{equation}H^2(S^{[n]},\Z)_{(\cdot,\cdot)}\cong
H^2(S,\Z)\oplus_{\perp}\Z\delta\label{decomp}\end{equation}
where the form on $H^2(S,\Z)$ is the intersection form, $2\delta$ is the divisor of non-reduced subschemes, and $(\delta,\delta)=2-2n$.  The embedding of $H^2(S,\Z)$ is achieved via the canonical isomorphism
\[H^2(S,\Z)\cong H^2(\Sym^nS,\Z)\]
and pullback along the contraction
$\sigma:S^{[n]}\into \Sym^nS$.  The inverse of
$(\cdot,\cdot)$ defines a $\Q$-valued form on
$H_2(X,\Z)$ which we will also denote
$(\cdot,\cdot )$; by Poincar\'{e} duality, we obtain
a decomposition dual to \eqref{decomp}.  For example, the
class $\delta^\vee\in H_2(X,\Z)$ Poincar\'{e} dual to the exceptional divisor $\delta$ has square $(\delta^\vee,\delta^\vee)=\frac{1}{2-2n}$.  The form induces an embedding $H^2(X,\Z)\subset H_2(X,\Z)$ under which the two forms match up, and since the determinant of $(\cdot,\cdot)$ on $H^2(X,\Z)$ is $2-2n$, we can write any $\ell\in H_2(X,\Z)$ as $\ell=\frac{\lambda}{2n-2}$ for some $\lambda\in H^2(X,\Z)$.  We will refer to the smallest multiple of $\ell$ that is in $H^2(X,\Z)$ as the Beauville-Bogomolov dual $\rho$ of $\ell$.

\subsection{Cones of effective curves}
Much of the geometry of a $K3$ surface $S$ is encoded in its nodal classes, the indecomposable effective curve classes $C$ for which $(C,C)=-2$.  Suppose $S$ has an ample divisor $H$; let $N_1(S,\mathbb{Z})\subset
H_2(S,\Z)$ be the group of curve classes modulo homological
equivalence, and $\NE_1(S)\subset N_1(S,\mathbb{R})=N_1(S,\mathbb{Z})\otimes \mathbb{R}$
the cone of effective curves.  Then it is well-know that \cite[Lemma 1.6]{torelli}
\begin{equation}\label{cone}\NE_1(S)=\langle C\in N_1(S,\Z)|
 H\cdot C>0\;\mathrm{and}\;C\cdot C\geq -2\rangle\end{equation}
By Kleiman's criterion there is a dual statement for the
ample cone; here by $\langle\cdots\rangle$ we mean ``the cone
generated by $\cdots$''.

Hassett and Tschinkel \cite{extremal} conjectured that
the cone of effective curves in a holomorphic symplectic variety
$X$ is similarly determined intersection theoretically by the
Beauville-Bogomolov form.  The original form of this conjecture was:
\setcounter{custom}{0}
\begin{conjecture_disproven}(\cite[Thesis 1.1]{extremal})\label{general}  Let $X$ be an irreducible holomorphic symplectic
variety with polarization $H$.  Then there is a positive rational constant $c_X$
dependent only on the deformation class of $X$ such that
\[\NE_1(X)=\langle C\in N_1(X,\Z)|H\cdot C>0\;\mathrm{and}\;
(C,C)\geq -c_X
\rangle\]
Further, if $X$ contains a smoothly embedded Lagrangian
$n$-plane $\P^n\subset X$, and $\ell\in \NE_1(X)$ is the
class of the line in $\P^n$, then the bound is realized:
\[(\ell,\ell)=-c_X\]
\end{conjecture_disproven}
\begin{remark} 
As stated the first part of this conjecture is false.  A counterexample was originally constructed by Markman \cite{privatemarkman} for $X$ deformation equivalent to a Hilbert scheme of 5 points on a $K3$ surface, and the example is treated in detail in \cite[Remark 9.4]{macri}.  It is still expected that the cone of effective curves is still cut out intersection theoretically in terms of the Beauville-Bogomolov form, though necessarily in a more complicated fashion.  For $X$ of dimension $<8$ the original form of the conjecture is still expected to be true.

%\label{remark}In fact, there is a more precise statement of \eqref{cone}:  the cone of effective curves is generated by the closure $\overline C_+$ of the positive cone and the nodal cone $C_{-2}$,
%\begin{align*}
%\NE_1(S)&=\langle C\in N_1(S,\Z)|H\cdot
%C>0\;\mathrm{and}\;C\cdot C\geq 0\rangle\\
%&+
%\langle C\in N_1(S,\Z)|H\cdot
%C>0\;\mathrm{and}\;C\cdot C=-2\rangle
%\end{align*}
%A nodal class is necessarily
%represented by a smooth rational $\P^1\subset S$, and generates
%an extremal ray in $\NE_1(S)$ whose associated extremal
%contraction is the contraction of $\P^1$ to an isolated
%singularity.
%
%Likewise, there is a finer version of the conjecture (see \cite[$\mathsection 3$]{oldfour} for the case of fourfolds), predicting
%that the cone of effective curves on $X$ is the sum of the closure of
%the positive cone
%\[\overline C_+=\langle C\in N_1(X,\Z)|H\cdot C>0\;\mathrm{and}\;(C,C)\geq 0\rangle\] and cones of the form
%\[C_{-\gamma}=\langle C\in N_1(X,\Z)|H\cdot C>0\;\mathrm{and}\;
%(C,C)=-\gamma\rangle\]
%Further, we expect indecomposable curve classes in each $C_{-\gamma}$---``nodal classes of type $-\gamma$''---to generate extremal rays corresponding to extremal contractions of the same geometry.  For example, all curve classes that are lines in a Lagrangian $n$-plane will belong to a single $C_{-\gamma}$.  By Conjecture \ref{general}, Lagrangian $n$-planes play a particularly important role by picking out the largest value of $\gamma$ that appears in the decomposition.
\end{remark}
The sufficiency of the intersection theoretic criterion in Disproven Conjecture \ref{general} has been worked out in full detail for $X$
deformation equivalent to a Hilbert scheme of 2 points on a $K3$
surface \cite[Theorem 1]{moving}.  In this case, there are three types of indecomposable ``nodal'' classes that appear---those with Beauville-Bogomolov square $-\frac{1}{2},-2,$ and $-\frac{5}{2}$---and their extremal rays correspond
to the 2 types of extremal contractions:
\begin{itemize}
\item[(i)]  Divisorial extremal contractions.  In this case, the exceptional
divisor $E$ is contracted to a $K3$ surface $T$.  The generic fiber
over $T$ is either an $A_1$ or $A_2$ configuration of rational
curves \cite[Theorem 21]{moving}, and if $C$ is the class of the generic fiber of
the normalization, then either $(C,C)=-2$ or $-1/2$,
respectively.
\item[(ii)]  Small extremal contractions.  In this case, $f$ contracts a Lagrangian $\P^2$ to an isolated singularity, and the class of a line $\ell$ satisfies $(\ell,\ell)=-5/2$.
\end{itemize}
See \cite{extremal} for some speculations about the ``nodal" classes that appear in higher dimensions.

\subsection{Lagrangian $n$-planes}Generalizing slightly, let $X$ be an irreducible holomorphic symplectic \emph{manifold}---that is, a simply-connected K\"{a}hler manifold with $H^0(\Omega^2_X)\cong\C$ generated by a nowhere degenerate 2-form.
There are only two infinite families of deformation classes of irreducible holomorphic symplectic manifolds known:  Hilbert schemes of points on $K3$ surfaces and generalized Kummer varieties.  We will be concerned with the former; following Markman, we define $X$ to be of \K{n}-type if it is deformation equivalent to a Hilbert scheme of $n$ points on a $K3$ surface.  In this case, we expect: 
 \begin{conjecture_custom}(\cite[Conjecture 1.2]{extremal})\label{conj}  Let $X$ be of \K{n}-type, $\P^n\subset X$ a smoothly
 embedded Lagrangian $n$-plane, and $\ell\in
 H_2(X,\mathbb{Z})$ the class of the line in $\P^n$.  Then
 \[(\ell,\ell)=-\frac{n+3}{2}\]
 \end{conjecture_custom}
  Further, we expect this to still be the minimal Beauville-Bogomolov square of a reduced irreducible curve class.  The conjecture has been verified for $n=2$ in \cite{moving} and for $n=3$ in \cite{HHT}. 
 \begin{remark}There is a similar conjecture for the class of a
 line $\ell$ in a smoothly embedded Lagrangian $n$-plane
 $\P^n\subset X$ for $X$ deformation equivalent to a
 $2n$-dimensional generalized Kummer variety $K_nA$ of an abelian surface $A$.  In this case, we
 expect
 \[(\ell,\ell)=-\frac{n+1}{2}\]
 This conjecture has been verified for $n=2$ in \cite{kummer}.
 \end{remark}
 Our main result is a proof of Conjecture \ref{conj} in the $n=4$ case; furthermore, we completely classify the class of the Lagrangian $4$-plane:
 \begin{theorem}[see Theorem \ref{mainthm}]  Let $X$ be of \K{4}-type, $\P^4\subset X$ be a smoothly embedded Lagrangian 4-plane, $\ell\in H_2(X,\Z)$ the class of a line in $\P^4$, and $\rho=2\ell\in H^2(X,\Q)$.  Then $\rho$ is integral, and
 \[[\P^4]=\frac{1}{337920}\left(880\rho^4+1760\rho^2\chern{2}(X)-3520\theta^2+4928\theta
 \chern{2}(X)-1408\chern{2}(X)^2\right)\]
 Further, we must have $(\ell,\ell)=-\frac{7}{2}$.
 \end{theorem}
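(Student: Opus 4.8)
The plan is to turn the determination of $[\P^4]$ into a finite‑dimensional linear algebra problem over $H^*(X,\Q)$, and to extract $(\ell,\ell)=-\tfrac72$ as a consistency condition for that system. Three inputs are needed: (i) a small explicitly described subspace $V\subseteq H^8(X,\Q)$ that is known \emph{a priori} to contain $[\P^4]$; (ii) the restrictions of $[\P^4]$ and of the standard degree‑$\le 8$ classes to $\P^4$ itself; and (iii) enough intersection numbers on $X$ to pair the generators of $V$ against degree‑$8$ monomials in $\rho,\theta,\chern{2}(X),\chern{4}(X)$. Input (i) is precisely where the ring of monodromy invariants enters; inputs (ii) and (iii) produce the two sides of a linear system, and equating them over all relevant monomials gives an over‑determined set of equations.

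For (i) I would argue as follows. The class $[\P^4]$ is algebraic, and a smoothly embedded Lagrangian $\P^4$ persists under any deformation of $X$ along which $\rho$ stays of type $(1,1)$ (by the deformation theory of Lagrangian planes together with the Verbitsky–Markman Torelli theorem for \K{n}-type); on a very general such deformation $\mathrm{Pic}(X)=\Z\rho$. Hence $[\P^4]$ lies in the space of Hodge classes in $H^8$ of a very general \K{4}-type manifold of Picard rank one. By Markman's description of $\Mon^2(X)$ and the Looijenga–Lunts–Verbitsky realization of $H^*(X,\Q)$ as a module over $\mathfrak{so}(4,21)$, that space is spanned by $\rho^4$, $\rho^2\theta$, $\rho^2\chern{2}(X)$ together with the monodromy‑invariant degree‑$8$ classes $\theta^2$, $\theta\,\chern{2}(X)$, $\chern{2}(X)^2$, $\chern{4}(X)$, where $\theta\in H^4(X,\Q)$ is the distinguished degree‑$4$ class attached to the Beauville–Bogomolov form. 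Computing the monodromy‑invariant subring of $H^{\le 8}(X,\Q)$ explicitly serves to extract the linear relations these classes satisfy inside $H^*(S^{[4]},\Q)$ — in particular that $\chern{4}(X)$ and $\rho^2\theta$ are redundant — which lets me take $V$ to be the five‑dimensional span of the classes appearing in the theorem, and lets me transport the resulting formula to an arbitrary $X$ by parallel transport.

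For (ii) and (iii): since $\P^4$ is Lagrangian, $N_{\P^4/X}\cong\Omega^1_{\P^4}$, so with $h$ the hyperplane class $c(TX)|_{\P^4}=(1+h)^5(1-h)^5=(1-h^2)^5$, giving $\chern{2}(X)|_{\P^4}=-5h^2$, $\chern{4}(X)|_{\P^4}=10h^4$ and $[\P^4]|_{\P^4}=c_4(\Omega^1_{\P^4})=5h^4$, whence $[\P^4]^2=5$. Writing $\rho|_{\P^4}=m\,h$ and $\theta|_{\P^4}=t\,h^2$, the integer $m$ and the rational $t$ are functions of the two \emph{a priori} unknown invariants $q:=(\rho,\rho)\in 2\Z$ and the index $k\mid 6$ with $\rho=k\ell$ (so $(\ell,\ell)=q/k^2$ and $m=q/k$). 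This computes every $\int_X[\P^4]\cdot M$ for degree‑$8$ monomials $M$. On the other side, the pairings $\int_X M\cdot M'$ and the integrals $\int_X\rho^j(\cdots)$ among the generators come from the Fujiki relation and the known Chern‑number / Rozansky–Witten integrals for Hilbert schemes of points, again as explicit functions of $q$. Setting $[\P^4]=a\rho^4+b\rho^2\chern{2}(X)+c\,\theta^2+d\,\theta\chern{2}(X)+e\,\chern{2}(X)^2$ and equating the two sides over all relevant monomials yields an over‑determined linear system in $a,b,c,d,e$ and $q,k,m,t$; its solvability forces $q=-14$ and $k=2$ — so $(\ell,\ell)=-\tfrac72$ and $\rho=2\ell$ is integral (and, using that $\ell$ generates the extremal ray, primitive), with $m=-7$ and $t=-\tfrac72$ — and the solution is then exactly the displayed formula.

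Finally I would promote the statement from one model to all of \K{4}-type by showing the Lagrangian $4$‑planes form a single monodromy orbit: the numerical invariants governing the orbit — the square $(\rho,\rho)=-14$ and the divisibility of $\rho$ — are the same for every Lagrangian $\P^4$ by the analysis above, so Eichler's criterion for the \K{4}-type lattice $U^{\oplus 3}\oplus E_8(-1)^{\oplus 2}\oplus\langle -6\rangle$ together with Markman's identification of $\Mon^2(X)$ puts all such $\rho$ in one $\Mon^2(X)$‑orbit, and the deformation result for Lagrangian planes then shows any two such planes are related by parallel transport and monodromy; since the formula is compatible with parallel transport, it holds for all $X$. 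I expect the main obstacle to be input (i): computing the ring of monodromy invariants of $H^*(X,\Q)$ through degree $8$ precisely enough to pin down $V$ — in particular establishing the redundancy of $\chern{4}(X)$ and $\rho^2\theta$ — since this is what makes the linear system finite‑dimensional and the answer canonical. The intersection‑number bookkeeping behind (iii) is lengthy but essentially mechanical, and is naturally carried out with computer assistance as the macros in the preamble suggest.
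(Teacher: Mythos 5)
Your overall architecture---locate a monodromy-invariant subspace guaranteed to contain $[\P^4]$, restrict the generators to $\P^4$, and solve for the coefficients against known intersection numbers---is the same as the paper's. But your input (i) mis-identifies that subspace, and this is not a technicality: for $X$ of \K{4}-type the space $I^8_\rho(X)$ of $G_\rho$-invariant rational classes is $8$-dimensional, not $5$. Besides $\rho^4,\rho^2\theta,\rho^2\chern{2}(X)$ and the $3$-dimensional $I^8(X)=\langle\theta^2,\theta\chern{2}(X),\chern{2}(X)^2\rangle$ (where $\chern{4}(X)$ is indeed redundant, via $\theta^2=\tfrac{7}{5}\theta\chern{2}(X)-\tfrac{31}{60}\chern{2}(X)^2+\tfrac{1}{15}\chern{4}(X)$), there are two further invariants $\alpha\theta$ and $\alpha\chern{2}(X)$, where $\alpha$ is a fourth class in the $4$-dimensional $I^4_\rho(X)$ that is \emph{not} a polynomial in $\rho,\theta,\chern{2}(X)$. (The count follows from branching: $H^8(X,\C)\cong\triv^3\oplus V(1)^2\oplus V(2)^2\oplus V(2,1)\oplus V(4)$ as a $G_X$-module, and restriction to $G_\rho$ yields $3+2+2+0+1=8$ trivial summands.) Your ansatz for $[\P^4]$ omits these two components with no justification; also, $\rho^2\theta$ is not ``redundant''---it is an independent basis vector whose coefficient merely happens to vanish in the final answer, which you cannot assume in advance.

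The downstream consequence is that there is no over-determined linear system whose ``solvability forces $q=-14$.'' Writing $\alpha|_{\P^4}=yh^2$ with $y\in\Q$ unknown, pairing $[\P^4]$ against the $8$ basis classes determines all its coefficients as explicit rational functions of $x=(\lambda,\lambda)$ and $y$ (note also that $\theta|_{\P^4}=-\tfrac72h^2$ is pinned down absolutely by restricting the relation above to $\P^4$, not left as a parameter). The only remaining constraint is the self-intersection $[\P^4]^2=\chern{4}(T^*_{\P^4})=5$, which yields a single quartic Diophantine equation $y^2=P(x)$ with $x\in\Z$, $y\in\Q$. Proving that $(x,y)=(-126,0)$ is its unique solution is the hardest part of the paper: it requires reducing to integral points on $32$ elliptic curves $\mathcal{E}_v$, computing Mordell--Weil generators, and for two rank-one curves certifying the rank via Kolyvagin's theorem and a delicate numerical evaluation of $L'(E,1)$. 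Your proposal has no substitute for this step, and the ``mechanical bookkeeping'' framing substantially underestimates it. The concluding parallel-transport/monodromy-orbit argument is fine in spirit and matches how the paper transports the formula from $S^{[4]}$ to general $X$.
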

 Here $\theta$ is the image of the dual to the Beauville-Bogomolov form, thought of as an element of $\Sym^2H_2(X,\Q)^*\cong \Sym^2H^2(X,\Q)$, under the cup product map $\Sym^2H^2(X,\Q)\rightarrow H^4(X,\Q)$.  Likewise in the $n=3$ case the class of the Lagrangian 3-plane is completely determined by $\ell$, \emph{cf.} \cite[Theorem 1.1]{HHT}.  Our theorem provides evidence that Conjecture \ref{conj} is true in general, and conjecturally determines the minimal Beauville-Bogomolov square of indecomposable nodal classes on eightfolds deformation equivalent to Hilbert schemes of points on $K3$ surfaces.
 \subsection{Monodromy}
 We prove our result by using the representation theory of the monodromy group of $X$ to relate the intersection theory of $X$ to that of a Hilbert scheme of 4 points on a $K3$ surface, where the cohomology ring is actually computable.  In doing so we completely determine the ring of monodromy invariants on $X$.  
 
Recall that a monodromy operator is the parallel
translation operator on $H^*(X,\Z)$ associated to a smooth
family of deformations of $X$; the monodromy group $\Mon(X)$ is the subgroup of $\GL(H^*(X,\Z))$ generated by
all monodromy operators.  Let $\Mon^2(X)\subset \GL(H^2(X,\Z))$
be the quotient acting nontrivially on degree 2 cohomology, and
$\overline{\Mon}(X)\subset \GL(H^*(X,\C))$ (respectively $\overline{\Mon^2}(X)\subset \GL(H^2(X,\C))$) the Zariski closure of $\Mon(X)$ (respectively $\Mon^2(X)$).  By
the deformation invariance of the Beauville-Bogomolov form,
$\Mon^2(X)$ is actually contained in $\OO(H^2(X,\Z))$, the
orthogonal group of $H^2(X,\Z)$ with respect to
$(\cdot,\cdot)$.  \emph{A priori}, the full Lie group $G_X=\SO(H^2(X,\C))$ only acts on
$H^2(X,\C)$, but in fact for $X$ of \K{n}-type, the full cohomology ring $H^*(X,\C)$ carries a representation of $G_X=\SO(H^2(X,\C))$ compatible with cup product (\cite[Proposition 4.1]{HHT}).  The basic reason for this is two-fold, both results of Markman:
\begin{itemize}
\item[(a)] the quotient $\Mon(X)\into\Mon^2(X)$ has finite
kernel \cite[$\mathsection 4.3$]{autos};
\item[(b)] $G_X$ is a connected component of $\overline{\Mon^2}(X)$ \cite[$\mathsection 1.8$]{autos}.
\end{itemize}
The representation of $\Mon(X)$ on $H^*(X,\C)$ extends to one of $\overline\Mon(X)$.  By the above the connected component of the universal covers of $\overline{\Mon}(X),\overline{\Mon^2}(X)$ and $G_X$ are all identified, so the universal cover of $G_X$ acts on all of $H^*(X,\C)$; the representation descends to $G_X$ because of the vanishing of odd cohomology.

The action respects the Hodge structure, so we may consider the ring of Hodge classes:
\[I^*(X)=H^*(X,\Q)\cap H^*(X,\C)^{G_X}\]
Of course, $I^*(X)$ contains the Chern classes of the tangent bundle of $X$ and the Beauville-Bogomolov class $\theta\in H^4(X,\Q)$, but there can be many other Hodge classes.  Markman \cite{classes} constructs another series of Hodge classes $k_i\in I^{2i}(X)$, $i\geq2$, as characteristic classes of monodromy-invariant twisted sheaves.

Given $\lambda\in H^2(X,\Q)$, let $G_\lambda\subset G_X$ be the stabilizer of $\lambda$.  Define 
\[I_\lambda^*(X)=H^*(X,\Q)\cap H^*(X,\C)^{G_\lambda}\]
to be the ring of cohomology classes invariant under the monodromy group preserving $\lambda$.  For example, given a Lagrangian $n$-plane $\P^n\subset X$, the
deformations of $X$ that deform $\P^n$ are precisely those in
$H^{1,1}(X)\cap\rho^\perp$, where $\rho$ is the
Beauville-Bogomolov dual of the class of the line in $\P^n$, and
the orthogonal is taken with respect to the Beauville-Bogomolov
form \cite{hodge1,hodge2}.  Thus, the class $[\P^n]\in H^{2n}(X,\Z)$ must lie in the subring $I^*_\rho(X)$.  $G_X$ will act on these cohomology classes, and up to this action we expect there is a unique Lagrangian $n$-plane in general.  For $n=4$, this is a consequence of our result since $G_X$ acts transitively on rays in $H^2(X,\C)$:
\begin{corollary}\label{ring}For $X$ of \K{4}-type, there is a unique $G_X$ orbit of smooth Lagrangian $4$-plane classes $[\P^4]\in H^8(X,\C)$.
\end{corollary}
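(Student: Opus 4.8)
The plan is to deduce Corollary \ref{ring} directly from Theorem \ref{mainthm}, using only the $G_X$-equivariance implicit in the explicit formula for $[\P^4]$. First I would record the two structural facts already available: $G_X=\SO(H^2(X,\C))$ acts on the entire ring $H^*(X,\C)$ by cup-product automorphisms, and the classes $\theta\in H^4(X,\Q)$ and $\chern{2}(X)\in H^4(X,\Q)$ lie in $I^*(X)$, hence are fixed by $G_X$. It follows that the polynomial operator
\[F(v):=\frac{1}{337920}\bigl(880v^4+1760v^2\chern{2}(X)-3520\theta^2+4928\theta\chern{2}(X)-1408\chern{2}(X)^2\bigr),\qquad v\in H^2(X,\C),\]
is $G_X$-equivariant: $g\cdot F(v)=F(gv)$ for all $g\in G_X$, since $g$ commutes with cup products and fixes $\theta$ and $\chern{2}(X)$.

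Next I would take two smooth Lagrangian $4$-planes $\P^4,{\P'}^4\subset X$ — the case of planes living in distinct deformations of $X$ reduces to this one, since deformation equivalence identifies the cohomology rings and monodromy groups — with lines $\ell,\ell'$ and Beauville-Bogomolov duals $\rho=2\ell$, $\rho'=2\ell'$. Theorem \ref{mainthm} then gives $[\P^4]=F(\rho)$, $[{\P'}^4]=F(\rho')$, and $(\rho,\rho)=4(\ell,\ell)=-14=(\rho',\rho')$. Since the Beauville-Bogomolov form is nondegenerate and $\dim H^2(X,\C)=23\ge 3$, the group $G_X=\SO(H^2(X,\C))$ acts transitively on the level set $\{v:(v,v)=-14\}$ (standard; compare the transitivity on rays noted in the introduction). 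Hence there is $g\in G_X$ with $g\rho=\rho'$, and then
\[g\cdot[\P^4]=g\cdot F(\rho)=F(g\rho)=F(\rho')=[{\P'}^4].\]
Thus any two smooth Lagrangian $4$-plane classes lie in a single $G_X$-orbit, which is exactly Corollary \ref{ring}.

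I do not expect a genuine obstacle at this stage: essentially all the work has gone into Theorem \ref{mainthm}. The one point worth flagging is that this short argument really does use the \emph{explicit} formula — it is precisely the fact that $[\P^4]$ depends on the plane only through $\rho$, together with the $G_X$-fixed classes $\theta$ and $\chern{2}(X)$, that forces the equivariance exploited above. Absent the formula, one would instead have to show that the degree-$8$ part of $I^*_\rho(X)$ is, modulo monomials in $\theta$ and $\chern{2}(X)$, essentially one-dimensional; but establishing that is tantamount to the representation-theoretic computation of the monodromy-invariant ring carried out in the body of the paper, so here it can simply be invoked.
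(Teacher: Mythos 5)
Your proof is correct and is essentially the paper's own argument: the paper derives Corollary \ref{ring} from Theorem \ref{mainthm} in one line, citing exactly the transitivity of $G_X=\SO(H^2(X,\C))$ on vectors of fixed nonzero Beauville--Bogomolov square (phrased there as transitivity on rays) together with the fact that $[\P^4]$ is a universal polynomial in $\rho$, $\theta$, and $\chern{2}(X)$ with the latter two $G_X$-invariant. Your write-up just makes the equivariance of the formula and the Witt-type transitivity explicit, which is a faithful expansion rather than a different route.
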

% In the $n=3$ case, the degree $6$ part of $I^*_\rho(X)$ is
% 3-dimensional, spanned by $\rho\chern{2}(X),\rho^3$ and the
% unique 6-dimensional Hodge class $\eta\in I^6(X)$.  There is an
% affirmative answer to Conjectures \ref{conj} and \ref{ring}:
% \begin{theorem}(\cite[Theorem 1.1]{HHT})\label{n=3}  Let $X$ be of $K3^{[3]}$-type, $\P^3\subset X$ a
% smoothly embedded Lagrangian 3-plane, $\ell\in H_2(X,\Z)$ the
% class of the line in $\P^3$, and $\rho=2\ell\in H^2(X,\Q)$.
% Then $\rho$ is integral, and
% \[[\P^3]=\frac{1}{48}\left(\rho^3+\rho\chern{2}(X)\right)\]
% Further, we must have $(\ell,\ell)=-3$.
% \end{theorem}
%
%Note that $\rho$ is therefore the Beauville-Bogomolov dual to the line.  We prove below an affirmative answer to both Conjectures \ref{conj} and \ref{ring} in the $n=4$ case:
%\begin{theorem}[see Theorem \ref{mainthm}]  Let $X$ be of $K3^{[4]}$-type, $\P^4\subset X$ be a smoothly embedded Lagrangian 4-plane, $\ell\in H_2(X,\Z)$ the class of a line in $\P^4$, and $\rho=2\ell\in H^2(X,\Q)$.  Then $\rho$ is integral, and
% \[[\P^4]=\frac{1}{337920}\left(880\rho^4+1760\rho^2\chern{2}(X)-3520\theta^2+4928\theta
% \chern{2}(X)-1408\chern{2}(X)^2\right)\]
% Further, we must have $(\ell,\ell)=-\frac{7}{2}$.
% \end{theorem}
%  Since $G_X$ acts transitively on $\rho\in H^2(X,\C)$ with $(\rho,\rho)=-\frac{7}{2}\cdot 4=-14$, we have
% \begin{corollary}For $X$ of \K{4}-type, there is a unique class of a Lagrangian 4-plane $[\P^4]\in H^*(X,\Z)$ up to the action of $G_X$.
% \end{corollary}
\subsection*{Method of Proof and Outline}
We prove our result by first completely determining $I^*_\lambda(X)$ for $X=S^{[4]}$ a Hilbert scheme of 4 points on a $K3$ surface $S$ and $\lambda=\delta$.  This is done in Section 1 using the Nakajima basis and the results of \cite{cupproduct} on cup product.  The ring $I^*_\lambda(X)$ in the general case of $X$ of \K{4}-type and $\lambda\in H^2(X,\Z)$ will be isomorphic since $G_X$ acts transitively on rays in $H^2(X,\Z)$.  In Section 2 we construct an explicit isomorphism by finding a  monodromy invariant basis for $I_\lambda^*(X)$, from which we are able to derive the intersection form on $I_\lambda^8(X)$.  In Section 3 we take $\lambda$ proportional to the Beauville-Bogomolov dual of the class of a line in a smooth Lagrangian $4$-plane $\P^4\subset X$ and produce a diophantine equation in the coefficients of the class $[\P^4]$ with respect to the basis from Section 2.  In Section 4, we show the only solution to the diophantine equation is the conjectural one.  For completeness we include an appendix summarizing our localization computations to calculate the Fujiki constants in Section 2.
 \subsection*{Acknowledgements}We are grateful to Y. Tschinkel for suggesting the problem, and for many insights.  We would also like to thank B. Hassett and M. Thaddeus for useful conversations, and M. Stoll for explaining to us how to compute integral points on elliptic curves in Magma.   Finally, we thank the referee for useful comments.  The first author was supported in part by NSF Fellowship DMS-1103982. This project was completed while the second author was a postdoc at the California Institute of Technology. Some computations were performed on William Stein's server \texttt{geom.math.washington.edu}, supported by NSF grant DMS-0821725.
 \section{Structure of the ring of monodromy invariants}

 \subsection{The Lehn-Sorger formalism}We briefly summarize the
 work of Lehn and Sorger in \cite{cupproduct} on the cohomology ring of a Hilbert
 scheme of points on a $K3$ surface.   Given a Frobenius algebra $A$, they construct a Frobenius algebra
 $A^{[n]}$ such that when $A=H^*(S,\Q)$ for $S$ a $K3$ surface,
 $A^{[n]}$ is canonically $H^*(S^{[n]},\Q)$. 

The algebra $A=H^*(S,\mathbb{Q})$ comes equipped with a form
 $T=-\int_S:A\into \Q$ and a multiplication $m:A\otimes A\into A$ (given by cup-product) such
 that the pairing $(x,y)=T(xy)$ is nondegenerate.  There is also
 a comultiplication $\Delta:A\into A\otimes A$ adjoint to $m$
 with respect to the form $T\otimes T$ on $A\otimes A$. In this
 case $\Delta$ is the push-forward along the diagonal. Writing
 $1\in H^0(S,\mathbb{Z})$ for the unit, $\pt\in
 H^4(S,\mathbb{Z})$ for the point class, $e_1,\ldots, e_{22}$ as
 a basis for $H^2(S,\mathbb{Z})$, and
 $e_1^\vee,\ldots,e_{22}^\vee$ for the dual basis with respect
 to the intersection form, a simple computation using
 adjointness shows that $\Delta(1)=-\sum_j e_j\otimes
 e_j^\vee-\pt\otimes1-1\otimes\pt$,
 $\Delta(e_j)=-e_j\otimes\pt-\pt\otimes e_j$,
 $\Delta(e_j^\vee)=-e_j^\vee\otimes\pt-\pt\otimes e_j^\vee$ and
 $\Delta(\pt)=-\pt\otimes\pt$. Thus $\euler=-24\pt$. 

 We also have an $n$-fold multiplication $m[n]:A^{\otimes n}\into A$ and its adjoint $\Delta[n]:A\into A^{\otimes n}$. Note that $m[1]=\Delta[1]=\id$, $m[2]=m$, and $\Delta[2]=\Delta$.
\begin{lemma}\label{delta2} Using the previous formulae one obtains:
\begin{align*}
\Delta[3](1)&=\sum_j\sum(e_j)_a\otimes
(e_j^\vee)_b\otimes\pt_c+\sum\pt_a\otimes\pt_b\otimes 1_c\\
\Delta[3](e_j)&=\sum \pt_a\otimes\pt_b\otimes (e_j)_c\\
\Delta[3](e_j^\vee)&=\sum\pt_a\otimes\pt_b\otimes (e_j^\vee)_c\\
\Delta[3](\pt)&=\pt\otimes\pt\otimes\pt
\end{align*}
\end{lemma}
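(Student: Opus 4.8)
The plan is to use the recursive description of the iterated (co)multiplication. Since cup product on $A$ is associative, the iterated multiplication factors as $m[3]=m\circ(m\otimes\id_A)$, viewing $A^{\otimes 3}=A^{\otimes 2}\otimes A$. Passing to adjoints with respect to the forms $T$, $T^{\otimes 2}$, $T^{\otimes 3}$ — and using that the adjoint of a composite reverses order and that the adjoint of $m\otimes\id_A$ is $\Delta\otimes\id_A$ — one obtains $\Delta[3]=(\Delta\otimes\id_A)\circ\Delta$. Writing instead $m[3]=m\circ(\id_A\otimes m)$ gives $\Delta[3]=(\id_A\otimes\Delta)\circ\Delta$; the two expressions agree by coassociativity of $\Delta$, which is a useful sanity check on the sign conventions.

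Granting this identity, the lemma reduces to a direct substitution. Applying $\Delta\otimes\id_A$ to the already-computed values of $\Delta$ on the basis $1,e_j,e_j^\vee,\pt$ and expanding, one reads off the answer term by term. For instance, from $\Delta(1)=-\sum_k e_k\otimes e_k^\vee-\pt\otimes 1-1\otimes\pt$ we get $\Delta[3](1)=-\sum_k\Delta(e_k)\otimes e_k^\vee-\Delta(\pt)\otimes 1-\Delta(1)\otimes\pt$, and inserting $\Delta(e_k)=-e_k\otimes\pt-\pt\otimes e_k$, $\Delta(\pt)=-\pt\otimes\pt$, and $\Delta(1)$ again yields precisely the six tensors $\sum_j e_j\otimes e_j^\vee\otimes\pt$, $\sum_j e_j\otimes\pt\otimes e_j^\vee$, $\sum_j\pt\otimes e_j\otimes e_j^\vee$, $\pt\otimes\pt\otimes 1$, $\pt\otimes 1\otimes\pt$, $1\otimes\pt\otimes\pt$, which is the symmetrized expression in the statement. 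The cases of $e_j$ and $e_j^\vee$ are shorter and completely analogous, and the last one is immediate: $\Delta[3](\pt)=-\Delta(\pt)\otimes\pt=\pt\otimes\pt\otimes\pt$.

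There is no genuine obstacle here; the work is purely bookkeeping. The two points to watch are the signs — all of which originate from the normalization $T=-\int_S$, equivalently from the minus signs already present in the formulas for $\Delta$ — and the symmetry $\sum_j e_j\otimes e_j^\vee=\sum_j e_j^\vee\otimes e_j$, which holds because the intersection form on $H^2(S,\Z)$ is symmetric (so $(e_j^\vee)^\vee=e_j$ after reindexing). This symmetry is exactly what collapses the various placements of $e_j$, $e_j^\vee$, $\pt$ into the single symmetrized sum $\sum_j\sum(e_j)_a\otimes(e_j^\vee)_b\otimes\pt_c$ written above, each summand occurring with coefficient one.
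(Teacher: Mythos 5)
Your proof is correct and follows the same route as the paper, which simply cites the recursion $m[n]=m[2]\circ(m[n-1]\otimes\id)$ and its adjoint $\Delta[n]=(\Delta[n-1]\otimes\id)\circ\Delta[2]$ and leaves the substitution to the reader. You carry out that substitution explicitly (including the sign bookkeeping from $T=-\int_S$ and the symmetry $\sum_j e_j\otimes e_j^\vee=\sum_j e_j^\vee\otimes e_j$ needed to match the symmetrized form of the statement), which is exactly what the paper's one-line proof leaves implicit.
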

By $\pt_a\otimes\pt_b\otimes 1_c\in A^{\otimes 3}$ we mean $\pt$ inserted in the $a$th and $b$th tensor factors, and $1$ inserted in the $c$th factor.  All unspecified sums in Lemma \ref{delta2} are over bijections $\{1,2,3\}\xrightarrow{\cong}\{a,b,c\}$.
\begin{proof}This follows from the relation $m[n]=m[2]\circ(m[n-1]\otimes\id)$ for $n\geq 2$ and the dual relation $\Delta[n]=(\Delta[n-1]\otimes\id)\circ\Delta[2]$.
\end{proof}

  Let $[n]=\{k\in
\mathbb{N}|k\leq n\}$.  Define the tensor
product of $A$ indexed by a finite set $I$ of cardinality $n$ as
\[A^I:=\left(\bigoplus_{\phi:[n]\xrightarrow{\cong}I}A_{\phi(1)}\otimes\cdots\otimes A_{\phi(n)}\right)/S_n\]
where $S_n$ acts by permuting the tensor factors in each summand
in the obvious way.  $A^I$ is a Frobenius algebra with
multiplication $m^I$ and form $T^I$.

Note that for (finite) sets $U,V$ and a
bijection $U\rightarrow V$ there is a canonical isomorphism $A^U\rightarrow
A^V$, so we can always choose a bijection of
$I$ with some $[k]$ to reduce
to the usual notion of finite self tensor products.
In general, for any surjection $\varphi:U\rightarrow V$, there is an obvious ring homomorphism
\[\phi^*:A^U\rightarrow A^V\]
using the ring structure to combine factors indexed by elements of $U$ in the same fiber of $\varphi$.  There is an adjoint map
\[\phi_*:A^V\rightarrow A^U\]
with the important relation
\[\phi_*(a\cdot\phi^*(b))=\phi_*(a)\cdot b\]
which follows directly from the adjointness.

For any subgroup $G\subset S_n$, we can
consider the left coset space $G\backslash[n]$, and form $A^{G\backslash[n]}$.  In particular, for
$\sigma\in S_n$ and $G=\langle\sigma\rangle$ the group generated
by $\sigma$, we denote $A^\sigma=A^{G\backslash[n]}$.  Let
\[A\{S_n\}=\bigoplus_{\sigma\in S_n} A^\sigma\cdot\sigma\]
A pure tensor element of $A^\sigma$ is
specified by attaching an element $\alpha_i\in A$ to each orbit
$i\in I=\langle \sigma\rangle\backslash[n]$.  For example, for a function $\nu:I\rightarrow
\mathbb{Z}_{\geq 0}$,
\[\euler^\nu=\otimes_{i\in I}\euler^{\nu(i)}\in A^\sigma\]

There is a natural product structure on $A\{S_n\}$.  For any
inclusion of subgroups $H\subset K$ of $S_n$ there is a
surjection $H\backslash[n]\rightarrow K\backslash[n]$ and
therefore maps
\[f^{H,K}:A^{H\backslash[n]}\rightarrow A^{K\backslash[n]}\]
\[f_{K,H}:A^{K\backslash[n]}\rightarrow A^{H\backslash[n]}\]
The product is then
\begin{equation}\label{multiplication}\begin{CD}
A^\sigma\otimes A^\tau@>>>A^{\sigma\tau}\\
a\otimes b@>>>f_{\langle\sigma,\tau\rangle,\langle\sigma\tau\rangle}\left(f^{\langle\sigma\rangle,\langle\sigma,\tau\rangle}(a)\cdot f^{\langle\tau\rangle,\langle\sigma,\tau\rangle}(b)\cdot \euler^{g(\sigma,\tau)}\right)
\end{CD}\end{equation}
where $\langle\sigma,\tau\rangle$ is the subgroup
of $S_n$ generated by $\sigma,\tau$, and the graph defect $g(\sigma,\tau):\langle\sigma,\tau\rangle\backslash[n]\rightarrow
\mathbb{Z}_{\geq 0} $ is
\[g(\sigma,\tau)(B)=\frac{1}{2}\left(|B|+2-|\langle\sigma\rangle\backslash
B|-|\langle\tau\rangle\backslash
B|-|\langle\sigma\tau\rangle\backslash B|\right)\]

$S_n$ acts naturally on $A\{S_n\}$.  For any $\tau\in A\{S_n\}$,
there is for any $\sigma\in S_n$ a bijection
$\tau:\langle\sigma\rangle\backslash[n]\into\langle\tau\sigma\tau^{-1}\rangle\backslash[n]$.
$\tau$ then acts on $A\{S_n\}$ via
$\tau^*:A^\sigma\cdot\sigma\into
A^{\tau\sigma\tau^{-1}}\cdot\tau\sigma\tau^{-1}$ on each factor.
Define
\[A^{[n]}=A\{S_n\}^{S_n}\]
Note that for any partition $\mu=(1^{\mu_1},2^{\mu_2},\cdots)$
of $n$, there is a piece
\begin{equation}A^{[n]}_\mu=\left(\bigoplus_{\sigma\in C_\mu}A^\sigma\cdot\sigma\right)^{S_n}\cong \bigotimes_i\Sym^{\mu_i}A\label{syms}\end{equation}
where $C_\mu\subset S_n$ is the conjugacy class of permutations
$\sigma$ of cycle type $\mu$.

If $A$ is a graded Frobenius algebra, then $A^{[n]}$ is
naturally graded.  $A^\sigma$ is graded as a tensor
product of graded vector spaces, and we take
\[A^\sigma\cdot\sigma\cong A^\sigma[-2|\sigma|]\]
where if the cycle type of $\sigma$ is $\mu$, $|\sigma|=\sum_i
(i-1)\mu_i$.  In particular, the $m$th graded piece of \eqref{syms} is
\begin{equation}(A^{[n]}_\mu)_{m}\cong
\bigoplus_{\substack{(w,\mu)\\|(w,\mu)|=m}}\bigotimes_i\Sym^{\mu_i}A_{w_i}\label{syms2}\end{equation}
where the sum is taken over  weighted permutations $(w,\mu)$---\emph{i.e.} a
partition $\mu$ and a weight $w_i$ associated to each part---with
\[m=|(w,\mu)|=\sum_i(i-1)\mu_i+w_i\]
We then have
\begin{theorem}(\cite[Theorem 1.1]{cupproduct})\label{isom}
For $S$ a $K3$ surface, there is a natural isomorphism of graded Frobenius algebras 
\[\left(H^*(S,\Q)[2]\right)^{[n]}\cong H^*(S^{[n]},\Q)[2n]\]
\end{theorem}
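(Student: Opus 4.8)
The plan is to deduce the isomorphism from Nakajima's Fock-space description of $\bigoplus_{n\geq 0}H^*(S^{[n]},\Q)$ together with Lehn's computation of cup product with the boundary divisor, and then to match the resulting ring structure against the combinatorial product \eqref{multiplication} on $A^{[n]}$; throughout, $A=H^*(S,\Q)[2]$.

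First I would settle the vector-space identification. By Nakajima's theorem the creation operators $\mathfrak{q}_i(\alpha)$ ($i\geq 1$, $\alpha\in A$) generate $\bigoplus_n H^*(S^{[n]},\Q)$ freely as a polynomial algebra on the vacuum, so $H^*(S^{[n]},\Q)$ has a basis of monomials $\mathfrak{q}_{i_1}(\alpha_1)\cdots\mathfrak{q}_{i_k}(\alpha_k)|0\rangle$ with $\sum_j i_j=n$; grouping the $i_j$ into a partition $\mu$ exhibits $H^*(S^{[n]},\Q)\cong\bigoplus_\mu\bigotimes_i\Sym^{\mu_i}A$, and the cohomological degree of such a monomial (after the shift by $2n$) is exactly the weighted-permutation grading of \eqref{syms2}. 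Hence both sides agree as graded vector spaces and the entire content is the compatibility of products; naturality in $S$ will be automatic from the constructions on both sides.

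Next I would bring in Lehn's boundary operator $\mathfrak{d}$, cup product with $-\tfrac12$ times the class of the non-reduced locus. Lehn's structure theorem expresses the commutator $[\mathfrak{d},\mathfrak{q}_1(\alpha)]$ as a universal expression in Nakajima operators whose coefficients are built from the multiplication $m$ and comultiplication $\Delta$ of $A$---precisely the data appearing in Lemma \ref{delta2}, with $\euler=-24\pt$ entering through $\Delta$. Together with the classical fact that the subring $\iota^*\Sym^nA\subset H^*(S^{[n]},\Q)$ pulled back from $\Sym^nS$ along the Hilbert--Chow morphism is the $\mu=(1^n)$-piece, this computes every product of the form (pulled-back class)$\,\cup\,$(power of the boundary). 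On the abstract side one checks the parallel statements: $A^{[n]}$ is generated as a ring by $A^{[n]}_{(1^n)}\cong\Sym^nA$ together with the element attached to a single transposition, and the product \eqref{multiplication}, restricted to these generators, is governed by exactly $m$, $\Delta$ and the graph defect $g$.

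Finally I would check that the assignment sending $\iota^*\Sym^nA$ to $A^{[n]}_{(1^n)}$ identically and the boundary class to the transposition element extends to a ring isomorphism, by matching the two presentations. This comparison of relations is where the real work lies, and I expect it to be the main obstacle: on the geometric side the relations come from self-intersections of the boundary divisor and from intersecting it against diagonal classes, i.e.\ from excess-intersection computations on the nested Hilbert schemes $S^{[n-1,n]}$ and their normal bundles, and the graph defect $g(\sigma,\tau)$ is precisely the bookkeeping that records these excess contributions (associativity, transparent on the algebra side, must be read off from the geometry). Since $g$ only ever involves the triple $\sigma,\tau,\sigma\tau$, the verification reduces to identities at the level of $S^{[2]}$ and $S^{[3]}$, which makes it tractable. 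An alternative that trades this obstacle for a different computation is to route through the Chen--Ruan cohomology of the symmetric orbifold $[S^n/S_n]$: identify $A^{[n]}$ with $H^*_{\mathrm{CR}}([S^n/S_n])$ by directly evaluating the obstruction bundle---clean because $K_S$ is trivial, so over the sector indexed by $(\sigma,\tau)$ its Euler class is $\euler^{g(\sigma,\tau)}$---and then invoke the crepant-resolution comparison for the semismall contraction $S^{[n]}\to\Sym^nS$ to match $H^*(S^{[n]},\Q)$ with $H^*_{\mathrm{CR}}([S^n/S_n])$ as rings.
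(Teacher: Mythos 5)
First, a point of comparison: the paper does not prove this statement at all --- Theorem \ref{isom} is quoted verbatim as Theorem 1.1 of Lehn--Sorger \cite{cupproduct} and used as a black box. So there is no internal proof to measure you against; what you have written is a sketch of how one might reprove the cited theorem. In broad strokes your outline does follow the strategy of the actual published proof: Nakajima's Fock-space description supplies the additive isomorphism and the grading match with \eqref{syms2}, and Lehn's commutator formula for the boundary operator $\mathfrak{d}$ is the multiplicative input, with the crucial simplification that for $K_S=0$ all correction terms vanish and the formulas involve only the Frobenius-algebra data $m$, $\Delta$, $\euler$. As a roadmap this is sound.

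As a proof, however, it has two genuine gaps. The first is the generation statement on which everything else rests: you assert that $H^*(S^{[n]},\Q)$ is generated as a ring by the classes pulled back from $\Sym^nS$ together with the boundary, and that $A^{[n]}$ is generated by $A^{[n]}_{(1^n)}$ together with the transposition element. Neither is obvious; the geometric statement is a nontrivial theorem in its own right, special to surfaces with numerically trivial canonical class, and is precisely one of the main technical results that must be established (via the analysis of tautological classes, or by showing that a multiplication operator is determined by its commutators with the $\mathfrak{q}_1(\alpha)$ and its value on the vacuum). Without it, the step ``match the two presentations'' has nothing to stand on --- and you yourself defer the verification of the relations as ``where the real work lies'' without carrying it out, so the core of the argument is a plan rather than a proof. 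The second gap is that your proposed alternative route is circular: the ring isomorphism $H^*(S^{[n]},\Q)\cong H^*_{\mathrm{CR}}([S^n/S_n])$ for $K3$ surfaces is not an independently available ``crepant-resolution comparison'' that one may invoke for the semismall contraction $S^{[n]}\to\Sym^nS$; it is known exactly as the combination of the Lehn--Sorger theorem with Fantechi--G\"ottsche's computation of the orbifold side, i.e.\ as a corollary of the statement you are trying to prove. (Your evaluation of the obstruction bundle's Euler class as $\euler^{g(\sigma,\tau)}$ for trivial $K_S$ is correct, but it only identifies $A^{[n]}$ with the Chen--Ruan ring of the quotient stack; the comparison with the resolution is the hard content, not a citable input.)
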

The grading shift on both sides is such that the 0th
graded piece is middle cohomology.
\begin{remark}It will be important in the next section to note
that under the isomorphism of Theorem \ref{isom},
\begin{equation}n!\pt_1\otimes\cdots\otimes\pt_n\cdot(\id)\mapsto \pt_{S^{[n]}}\label{pointclass}\end{equation}
\end{remark}
\subsection{Monodromy invariants}\label{monodromy}
Let $S$ be a $K3$ surface, and $G_S=\SO(H^2(S,\C))$ the special
orthogonal group of the intersection form $(\cdot,\cdot)$ on
$S$.  $H^*(S,\C)$ is naturally a representation of $G_S$, acting
via the standard representation on $H^2(S,\C)$ and the trivial
representations on $H^0(S,\C)$ and $H^4(S,\C)$.

Recall (see for example \cite{repn}) that positive weights of the algebra $\SO_\C(k)$ of rank $r$ ($k=2r$ or $2k+1$)
are $r$-tuples $\lambda=(\lambda_1,\ldots,\lambda_r)$ with the $\lambda_i$ either all integral or all half-integral, and either 
\begin{align*}
\lambda_1\geq\lambda_2\geq\cdots\geq \lambda_{r-1}\geq|\lambda_r|\geq 0,&\hspace{2em} k=2r\\
\lambda_1\geq\lambda_2\geq\cdots\geq \lambda_{r-1}\geq \lambda_r\geq0,&\hspace{2em} k=2r+1
\end{align*}
Let the representation of $\SO_\C$ of highest weight $\lambda$
be denoted $V(\lambda)$.  Thus, $\triv=V(0,\ldots)$ is the
trivial representation, and $V=V(1,0,\ldots)$ the standard.
$\Sym^kV$ is not irreducible, since the form yields an invariant
$\theta\in \Sym^2V$, but $V(k,0,\ldots)=\Sym^kV/\Sym^{k-2}V$.  In the sequel, we will only indicate the nonzero weights, \emph{e.g.} $V=V(1)$.

If a Frobenius algebra $A$ carries a representation of a group $G$, $A^{[n]}$ naturally carries a representation of $G$ that can easily be read off of \eqref{syms2}.  Thus,
\begin{proposition}As a representation of $G_S$, we have
\begin{align*}
H^2(S^{[4]},\C)&\cong \triv_S\oplus V_S(1)\\
H^4(S^{[4]},\C)&\cong \triv_S^4\oplus V_S(1)^2\oplus V_S(2)\\
H^6(S^{[4]},\C)&\cong\triv_S^5\oplus V_S(1)^5\oplus V_S(1,1)\oplus V_S(2)^2\oplus V_S(3)\\
H^8(S^{[4]},\C)&\cong\triv_S^8\oplus V_S(1)^6\oplus V_S(1,1)\oplus V_S(2)^4\oplus V_S(2,1)\oplus V_S(3)\oplus V_S(4)
\end{align*}
\end{proposition}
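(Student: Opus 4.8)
The plan is to read off the decomposition from the Lehn--Sorger model of $H^*(S^{[4]},\Q)$ in Theorem \ref{isom}, using that the $G_S$-action in question is the one induced functorially from the $G_S$-action on $A=H^*(S,\C)[2]$.

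First I would note that $G_S=\SO(H^2(S,\C))$ acts on $A=H^*(S,\C)[2]$ by automorphisms of graded Frobenius algebras: it acts by the standard representation $V=V(1)$ on the middle summand $H^2(S,\C)$ and trivially on $H^0(S,\C)$ and $H^4(S,\C)$, and, since it preserves the intersection form, it preserves both the cup product (which on $H^2\times H^2$ is $(\alpha,\beta)\mapsto(\alpha,\beta)\pt$, and is multiplication by the unit or zero in the remaining bidegrees) and the trace form $T=-\int_S$. Hence the functor $A\mapsto A^{[4]}$ of \cite{cupproduct} transports this to a $G_S$-representation structure on $H^*(S^{[4]},\C)$ compatible with \eqref{syms}--\eqref{syms2}, under which the isomorphism of Theorem \ref{isom} is $G_S$-equivariant. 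By \eqref{syms}--\eqref{syms2}, $H^*(S^{[4]},\C)$ is then a direct sum indexed by a partition $\mu$ of $4$ together with a choice, for each cycle, of one of $1\in H^0(S)$, a class of $H^2(S)$, or $\pt\in H^4(S)$; the corresponding summand is $\bigotimes_i\Sym^{a_i}V$, where $a_i$ is the number of length-$i$ cycles carrying an $H^2$-class (a cycle carrying $H^0$ or $H^4$ contributing a trivial factor), and a length-$k$ cycle carrying a class in $H^d(S)$ contributes $2(k-1)+d$ to the cohomological degree of the summand.

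Then I would run through the five partitions $(1^4),(2,1,1),(2^2),(3,1),(4)$ of $4$; for each, enumerate the labellings landing in cohomological degrees $2,4,6,8$; and decompose each resulting tensor product of symmetric powers of $V$ into $\SO$-irreducibles using $\Sym^kV\cong V(k)\oplus V(k-2)\oplus\cdots$, $V\otimes V\cong V(2)\oplus V(1,1)\oplus\triv$, and $V\otimes V(2)\cong V(3)\oplus V(2,1)\oplus V(1)$ (whence $V\otimes\Sym^2V\cong V(3)\oplus V(2,1)\oplus V(1)^2$). Summing over all partitions and labellings and collecting terms yields the four displayed isomorphisms; Poincar\'{e} duality then determines $H^{2k}(S^{[4]},\C)$ for $k>4$. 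The total $G_S$-character can be cross-checked against the coefficient of $z^4$ in the $\SO(H^2(S,\C))$-equivariant G\"{o}ttsche product $\prod_{k\ge1}\big((1-z^kq^{2k-2})^{-1}(1-z^kq^{2k+2})^{-1}\sum_{j\ge0}z^{kj}q^{2kj}[\Sym^jV]\big)$.

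There is no real conceptual obstacle; the two points that require care are the degree bookkeeping in \eqref{syms2}, so that each summand lands in the intended cohomological degree, and the extraction of the non-symmetric-power irreducibles $V(1,1)$ and $V(2,1)$ with their correct multiplicities --- these arise only through genuine tensor products $V\otimes V$ or $V\otimes\Sym^2V$ of the factors attached to two distinct cycles (or to a single $\Sym^{\ge2}$ over a repeated cycle type), and never from symmetric powers of a single $V$.
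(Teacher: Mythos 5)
Your proposal is correct and is precisely the paper's argument: the paper simply asserts that the decomposition ``can easily be read off of \eqref{syms2}'', and your enumeration of labelled partitions together with the standard $\SO$ decompositions of tensor products of symmetric powers of $V$ is exactly that computation spelled out (and the dimension counts check against $\chi(S^{[4]})=25650$). One hair to split: $\Sym^{\geq 2}V$ contains only the representations $V(k)$, so $V(1,1)$ and $V(2,1)$ arise exclusively from tensor factors attached to cycles of \emph{distinct} lengths and never from a single symmetric power over a repeated cycle type, contrary to your final parenthetical --- but this does not affect the computation.
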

Poincar\'{e} duality is compatible with the $G_S$ action, so the above determines all cohomology groups.

Note that the invariant class in $H^2(S^{[n]},\C)$ is exactly $\delta$.  The decomposition \eqref{decomp} identifies the action of $G_S$ on $H^*(S^{[n]},\C)$ with that of $G_{\delta}\subset G_{S^{[n]}}$, the stabilizer of $\delta$.  In other words, deformations of $S^{[n]}$ orthogonal to the exceptional divisor $\delta$ remain Hilbert schemes of points of a $K3$ surface, and therefore come from a deformation of $S$.

Recall that $\SO_\C(k)$ has universal branching rules.  For
$\SO_\C(k-1)\subset\SO_\C(k)$ the stabilizer of a nonisotropic
vector $v\in V$, $(v,v)\neq 0$, we have
\[\Res^{\SO_\C(k)}_{\SO_\C(k-1)}V(\lambda)=\bigoplus_{\lambda'}V(\lambda')\]
where the sum is taken over all weights $\lambda'$ with
\[\lambda_1\geq\lambda_1'\geq \lambda_2\geq
\lambda_2'\geq\cdots\geq\lambda_r\geq|\lambda_r'|\geq 0\]

For $X$ of \K{n}-type, we can therefore deduce the structure of
$H^*(X,\C)$ as a $G_X$ representation from the structure of
$H^*(S^{[n]},\C)$ as a $G_S$ representation:
\begin{corollary} For $X$ of \K{4}-type, 
\begin{align*}
H^2(X,\C)&\cong V_X(1)\\
H^4(X,\C)&\cong \triv_X^2\oplus V_X(1)\oplus V_X(2)\\
H^6(X,\C)&\cong\triv_X\oplus V_X(1)^2\oplus V_X(1,1)\oplus V_X(2)\oplus V_X(3)\\
H^8(X,\C)&\cong\triv_X^3\oplus V_X(1)^2\oplus V_X(2)^2\oplus V_X(2,1)\oplus V_X(4)
\end{align*}
\end{corollary}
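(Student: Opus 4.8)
The plan is to read off the $G_X$-module structure of $H^*(X,\C)$ from the $G_S$-module structure of $H^*(S^{[4]},\C)$ recorded in the Proposition, by inverting the universal branching rule for the pair $\SO_\C(23)\supset\SO_\C(22)$. Since $X$ is of \K{4}-type, the $G_X$-representation on $H^*(X,\C)$ is deformation-invariant (as established above), so parallel transport gives a $G$-equivariant isomorphism $H^*(X,\C)\cong H^*(S^{[4]},\C)$ and it suffices to treat $X=S^{[4]}$. There $b_2=23$, so $G_X=\SO_\C(23)$; the class $\delta$ has $(\delta,\delta)=-6\neq0$, so its stabilizer $G_\delta\subset G_X$ equals $\SO_\C(\delta^\perp)=\SO_\C(22)$, which by the discussion after \eqref{decomp} is exactly $G_S$ under the identification $\delta^\perp\cong H^2(S,\C)$, and the restriction of the $G_X$-module $H^*(S^{[4]},\C)$ to $G_\delta$ is the $G_S$-module of the Proposition.

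The key point is that the stated branching rule is multiplicity-free and interlacing: $V_S(\mu)$ occurs in $\Res^{G_X}_{G_\delta}V_X(\lambda)$, then with multiplicity one, exactly when $\lambda_1\ge\mu_1\ge\lambda_2\ge\mu_2\ge\cdots$. In particular $\mu_j\le\lambda_j$ for all $j$, and since every weight occurring here has at most two nonzero parts, this forces $|\mu|\le|\lambda|$ with equality only for $\mu=\lambda$. By exactness of restriction, for each even degree $2i$,
\[[H^{2i}(S^{[4]},\C):V_S(\mu)]=[H^{2i}(X,\C):V_X(\mu)]+\sum_{\substack{|\lambda|>|\mu|\\ \mu\text{ interlaces }\lambda}}[H^{2i}(X,\C):V_X(\lambda)],\]
which is unitriangular for the partial order by $|\cdot|$. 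Hence a finite-dimensional $G_X$-module is determined by its restriction to $G_\delta$ (two with isomorphic restrictions would first disagree in a weight of maximal $|\cdot|$, contradicting the identity), so there is at most one answer, and I would compute it top-down: the weight of largest $|\cdot|$ appearing on the $S^{[4]}$ side contributes exactly its multiplicity as a $V_X(\cdot)$, one subtracts off the corresponding restriction, and repeats on the residue.

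Running this degree by degree is then routine. In $H^8$, for instance, one peels off $V_X(4)$ (forced by $V_S(4)$), then $V_X(2,1)$ (forced by the surviving $V_S(2,1)$), then two copies of $V_X(2)$, then two copies of $V_X(1)$, leaving $\triv_X^3$ — reproducing the last line of the Corollary; degrees $H^2,H^4,H^6$ involve fewer weights and are quicker. As a safeguard one checks at each stage that the residue is a genuine non-negative $G_S$-module, and at the end that $\sum_\lambda[H^{2i}(X,\C):V_X(\lambda)]\dim V_X(\lambda)=b_{2i}$ — which is automatic once the $G_S$-decompositions match, since restriction preserves dimension and $\dim V_X(\lambda)=\sum_{\mu\text{ interlaces }\lambda}\dim V_S(\mu)$. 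Poincar\'{e} duality, which is $G_X$-equivariant, then gives the cohomology in degrees $>8$. The only real obstacle is the combinatorial bookkeeping of the interlacing conditions across the four decompositions, together with ruling out a spurious ``large'' $V_X(\lambda)$ hiding on the $X$ side — which is precisely what the dimension count prevents.
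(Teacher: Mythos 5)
Your proposal is correct and is essentially the paper's own (largely implicit) argument: the Corollary is obtained from the Proposition by inverting the universal $\SO_\C(23)\downarrow\SO_\C(22)$ interlacing branching rule, using the identification of $G_S$ with the stabilizer $G_\delta\subset G_{S^{[4]}}$. Your explicit observation that the restriction map on multiplicities is unitriangular for the order by $|\lambda|$, so the $G_X$-decomposition is uniquely determined and computable by peeling off top weights, is exactly the step the paper leaves to the reader, and your sample computation in $H^8$ reproduces the stated answer.
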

Again, Poincar\'{e} duality determines the representations of the other cohomology groups.
 \subsection{A basis for $I^*_\delta(S^{[4]})$} 
 For a partition $\mu=(1^{\mu_1},2^{\mu_2},\ldots)$ of $n$, the number of parts of $\mu$ is $\ell(\mu)=\sum \mu_i$.  By a labelled
partition $\mathbf{\mu}$ we will mean a partition $\mu$ and an
ordered list of $\ell(\mu)$ cohomology classes $\alpha\in
H^*(S,\Q)$.  For example, $(\{1\}_2,\{1,1\}_1)$ is a labelled
partition of 4, subordinate to the partition $\mu=(1^2,2)$, and
attaching the unit class to each part of $\mu$.  Such a labelled
partition $\mathbf{\mu}$ determines an element of
the Lehn-Sorger algebra of $H^*(S,\Q)[2]$ by summing over all
group elements $\sigma\in S_n$ with cycle type $\mu$, for
example
\begin{align*}
I(\{1\}_2,\{1,1\}_1)&=\sum_{(12)}1_{12}\otimes1_3\otimes1_4(12)\\
&=1_{12}\otimes 1_3\otimes 1_4(12)+1_{13}\otimes 1_2\otimes
1_4(13)+1_{14}\otimes 1_2\otimes 1_3(14)\\
&+1_1\otimes1_{23}\otimes 1_4(23)+1_{1}\otimes 1_{24}\otimes 1_3(24)+1_{1}\otimes 1_2\otimes 1_{34}(34)
\end{align*}
We can generate homogeneous classes of $H^*(S^{[n]},\Q)$ invariant under $G_S$ from partitions of $n$ labelled by cohomology classes $\{1,e,e^\vee,\pt\}$, where every time we have a label $e$, there must be a paired $e^\vee$ label, corresponding to inserting $e_j$ and $e_j^\vee$ in the corresponding tensor factors and summing over $j$.  For example, $I^2_\delta(S^{[4]})$ is spanned by $\delta=I(\{1\}_2, \{1, 1\}_1)$.  Generating sets for $I_\delta^{2k}(S^{[4]})$ for $k=2,3,4$ are given by:
\begin{center}
\begin{tabular}{rl|rl|rl}
\multicolumn{2}{c|}{$I^4_\delta(S^{[4]})$}		&	\multicolumn{2}{c}{$I^6_\delta(S^{[4]})$}		&	\multicolumn{2}{|c}{$I_\delta^8(S^{[4]})$}\\
\hline
$W=$&$I(\{1\}_3, \{1\}_1)$	&	$P=$&$I(\{1\}_4)$			&	$A=$&$I(\{e\}_3,\{e^\vee\}_1)$\\
$X=$&$I(\{1,1\}_2)$			& 	$Q=$&$I(\{\pt\}_2, \{1,1\}_1)$	&	$B=$&$I(\{1\}_3,\{\pt\}_1)$\\
$Y=$&$I(\{1, 1,1,\pt\}_1)$		&	$R=$&$I(\{1\}_2,\{1,\pt\}_1)$	&	$C=$&$I(\{\pt\}_3,\{1\}_1)$\\
$Z=$&$I(\{1,1,e,e^\vee\}_1)$	&	$S=$&$I(\{e^\vee\}_2,\{e,1\}_1)$&	$D=$&$I(\{1,\pt\}_2)$\\
&						&	$T=$&$I(\{1\}_2,\{e,e^\vee\}_1)$&	$E=$&$I(\{e,e^\vee\}_2)$\\
&&&&$F=$&$I(\{1,1,\pt,\pt\}_1)$\\
&&&&$G=$&$I(\{1,e,e^\vee,\pt\}_1)$\\
&&&&$H=$&$I(\{e,e,e^\vee,e^\vee\}_1)$\\
\end{tabular}
\end{center}
%Indeed, $I_\delta^2(S^{[4]})$ is spanned by $\delta=I(\{1\}_2, \{1, 1\}_1)$; $I^4_\delta(S^{[4]})$ is spanned by $W=I(\{1\}_3, \{1\}_1)$, $X=I(\{1,1\}_2)$, $Y=I(\{1, 1,1,\pt\}_1)$ and $Z=I(\{1,1,e,e^\vee\}_1)$; $I^6_\delta(S^{[4]})$ is spanned by $P=I(\{1\}_4)$, $Q=I(\{\pt\}_2, \{1,1\}_1)$, $R=I(\{1\}_2,\{1,\pt\}_1)$, $S=I(\{e^\vee\}_2,\{e,1\}_1)$ and $T=I(\{1\}_2,\{e,e^\vee\}_1)$; $I_\delta^8(S^{[4]})$ is spanned by $A=I(\{e\}_3,\{e^\vee\}_1)$, $B=I(\{1\}_3,\{\pt\}_1)$, $C=I(\{\pt\}_3,\{1\}_1)$, $D=I(\{1,\pt\}_2)$, $E=I(\{e,e^\vee\}_2)$, $F=I(\{1,1,\pt,\pt\}_1)$, $G=I(\{1,e,e^\vee,\pt\}_1)$ and $H=I(\{e,e,e^\vee,e^\vee\}_1)$. 
These classes are all clearly independent, and therefore by the computation of the dimensions of $I^*_\delta(S^{[4]})$ in the previous section they are bases.
\subsection{Cup product on $I^*_\delta(S^{[4]})$}
Using \eqref{multiplication} we compute the multiplicative
structure of $I^*_\delta(S^{[4]})$ in the above basis. These
computations are straightforward; for example,
\begin{align*}
\delta^2&=\left(\sum_{(12)}1_{12}\otimes 1_3\otimes
1_4(12)\right)^2\\
&=\sum_{(12)}\Bigl(\Delta(1)_{1,2}\otimes 1_3\otimes
1_4(\id)+1_{1,2,3}\otimes 1_4(132)\\
&\hspace{.5in}+1_{1,2,4}\otimes 1_3(142)+1_{1,2,3}\otimes
1_4(123)+1_{1,2,4}\otimes 1_3(124)+1_{12}\otimes
1_{34}(12)(34)\Bigl)\\
&=-3\sum_{1}\pt_1\otimes 1_2\otimes 1_3\otimes
1_4(\id)-\sum_{(12)}\sum_j(e_j)_1\otimes (e_j^\vee)_2\otimes
1_3\otimes 1_4(\id)\\
&\hspace{.5in}+3\sum_{(123)}1_{123}\otimes
1_4(123)+2\sum_{(12)(34)}1_{12}\otimes 1_{34}(12)(34)\\
&=-3Y-Z+3W+2X
\end{align*}The multiplication table for degree 4 elements is:
\begin{center}
\begin{tabular}{c||c|c|c|c}
&$W$&$X$&$Y$&$Z$\\
\hline\hline
$W$&$\substack{-3A-3B-27C-8D\\-8E+4F+2G}$&$-3A-3B-3C$&$B+3C$&$3A+66C$\\
\hline
$X$&&$-2D-2E+2F+G+H$&$2D$&$22D+4E$\\
\hline
$Y$&&&$2F$&$G$\\
\hline
$Z$&&&&$22F+2G+2H$\\
\end{tabular}
\end{center}
%Multiplication by $\delta$ is given by: $\delta^2=2X-3Y-Z+3W$, $\delta W=4P-4Q-2R-2S$, $\delta X=2P-R-T$, $\delta Y=2Q+R$, $\delta Z= 22Q+2S+T$, $\delta P=-3A-3B-3C-4D-4E$, $\delta Q=3C+D-F$, $\delta R=3B+3C+2D-4F-G$, $\delta S=3A+66C+4E-2G$, $\delta T=3A+22D-G-2H$, which implies that 
In particular, note that:
\begin{align}
\delta^4&=(\delta^2)^2=-81A- 81B- 729C- 192D-96E+ 84F + 30G + 6H\label{d4}
\end{align}
The multiplication table for $A,B,C,D,E,F,G,H$ is much simpler, 
\begin{center}
\begin{tabular}{c||c|c|c|c|c|c|c|c}
&$A$&$B$&$C$&$D$&$E$&$F$&$G$&$H$\\
\hline\hline
$A$&$\frac{176}{24}$&0&0&0&0&0&0&0\\
\hline
$B$&&0&$\frac{8}{24}$&0&0&0&0&0\\
\hline
$C$&&&0&0&0&0&0&0\\
\hline
$D$&&&&$\frac{6}{24}$&0&0&0&0\\
\hline
$E$&&&&&$\frac{66}{24}$&0&0&0\\
\hline
$F$&&&&&&$\frac{6}{24}$&0&0\\
\hline
$G$&&&&&&&$\frac{264}{24}$&0\\
\hline
$H$&&&&&&&&$\frac{1584}{24}$\\
\end{tabular}
\end{center}
where we have identified top cohomology $H^{16}(S^{[4]},\Q)\cong\Q$ as usual via the point class $\pt_{S^{[4]}}=24\pt_1\otimes\pt_2\otimes\pt_3\otimes\pt_4(\id)$ from \eqref{pointclass}. As a consistency check, from Corollary \ref{fujikis} we have $\delta^8=105(\delta,\delta)^4=136080$ and indeed, from \eqref{d4}, $\delta^8=(-81A - 81B - 729C - 192D - 96E + 84F + 30G + 6H)^2=136080$.
Note that the remaining classes and products (of cohomological degree divisible by 4, which is all we need) are determined by
Poincar\'{e} duality.
\subsection{The Beauville-Bogomolov form}From \eqref{decomp}, we
can explicitly write down $\theta$ in the $W,X,Y,Z$ basis:
\begin{align}
\theta&=\sum_j\left(\sum_{1}(e_j)_1\otimes1_2\otimes1_3\otimes1_4(\id)\right)\cdot\left(
\sum_{1}(e_j^\vee)_1\otimes1_2\otimes1_3\otimes1_4(\id)\right)-\frac{1}{6}\delta^2\notag\\
&=-\frac{1}{2}W-\frac{1}{3}X+\frac{45}{2}Y+\frac{13}{6}Z\label{bogoform}
\end{align}
By direct compoutation, using the results of the previous
section,
\begin{lemma}\label{theta}
\begin{align*}
\theta^4&=450225\\
\delta^2\theta^3&=-117450=19575(-6)\\
\delta^4\theta^2&=84564=2349\cdot(-6)^2\\
\delta^6\theta&=-93960=435\cdot(-6)^3\\
\delta^8&=136080=105\cdot(-6)^4
\end{align*}
\end{lemma}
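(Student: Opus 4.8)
The plan is to reduce the five identities to the explicit data of Section~1.4. Both $\theta$ and $\delta^2$ lie in the four-dimensional space $I^4_\delta(S^{[4]})$ with basis $W,X,Y,Z$: from \eqref{bogoform} we have $\theta=-\tfrac12W-\tfrac13X+\tfrac{45}{2}Y+\tfrac{13}{6}Z$, while the computation of $\delta^2$ at the start of that subsection gives $\delta^2=3W+2X-3Y-Z$. Feeding these into the degree-$4$ multiplication table (the $W,X,Y,Z$ table), I would first write $\theta^2$ and $\delta^2\theta$ as explicit $\Q$-linear combinations of the degree-$8$ basis $A,B,C,D,E,F,G,H$ of $I^8_\delta(S^{[4]})$; the remaining class we need, $\delta^4$, is already recorded in \eqref{d4}.

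With these three elements of $I^8_\delta(S^{[4]})$ in hand, each top intersection number is a single bilinear pairing read off from the $A,\dots,H$ multiplication table, whose only nonzero entries are $A^2,D^2,E^2,F^2,G^2,H^2$ and the off-diagonal $BC$ (so that $B^2=C^2=0$), all already normalized by the point-class identification $\pt_{S^{[4]}}=24\,\pt_1\otimes\pt_2\otimes\pt_3\otimes\pt_4\cdot(\id)$ of \eqref{pointclass}. Concretely $\theta^4=\theta^2\cdot\theta^2$, $\delta^2\theta^3=(\delta^2\theta)\cdot\theta^2$, $\delta^4\theta^2=\delta^4\cdot\theta^2=(\delta^2\theta)\cdot(\delta^2\theta)$, $\delta^6\theta=\delta^4\cdot(\delta^2\theta)$, and $\delta^8=\delta^4\cdot\delta^4$, where $\delta^4\theta^2$ being computable two ways gives a built-in consistency check.

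Finally one reads off the advertised factorizations $-117450=19575\cdot(-6)$, $84564=2349\cdot(-6)^2$, $-93960=435\cdot(-6)^3$, $136080=105\cdot(-6)^4$, with $(\delta,\delta)=2-2n=-6$; these are exactly what the generalized Fujiki relation predicts, since $\int_X\alpha^{8-2k}\theta^k$ is a universal constant times $(\alpha,\alpha)^{4-k}$ for every $\alpha\in H^2$, and they can be cross-checked against Corollary~\ref{fujikis} just as the text does for $\delta^8$. There is no conceptual obstacle here: Theorem~\ref{isom} together with the Lehn--Sorger product rule \eqref{multiplication} makes every one of these products algorithmically computable, so the only genuine hazard is bookkeeping---chiefly not dropping a monomial when squaring $\theta$ in the $W,X,Y,Z$ basis, and remembering the factor of $2$ coming from the off-diagonal $B$--$C$ pairing---which is precisely what the Fujiki-relation and ``compute it two ways'' checks are there to catch.
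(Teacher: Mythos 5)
Your proposal is correct and is exactly the paper's (largely implicit) argument: the paper proves Lemma \ref{theta} ``by direct computation, using the results of the previous section,'' i.e.\ by expanding $\theta$ and $\delta^2$ in the $W,X,Y,Z$ basis, pushing the products into the $A,\dots,H$ basis via the degree-$4$ multiplication table, and pairing with the degree-$8$ table normalized by \eqref{pointclass}. Your bookkeeping cautions (the factor of $2$ from the off-diagonal $B$--$C$ pairing, the two independent routes to $\delta^4\theta^2$, and the $\delta^8=105\cdot(-6)^4$ cross-check against Corollary \ref{fujikis}) are sound, and the computation indeed reproduces the stated values.
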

\section{Hodge classes on $X$}
Let $X$ be of \K{4}-type and $\lambda\in H^2(X,\Q)$.  The rings $I^*(X)$ and $I^*_\lambda(X)$ are isomorphic to the rings $I^*(S^{[4]})$ and $I^*_\delta(S^{[4]})$ since the action of $G_X$ is transitive on rays, but to construct an explicit isomorphism, we must find a geometric basis.  To do this, we need to understand the products of Hodge classes.
\subsection{Computation of the Fujiki constants for $S^{[4]}$}\label{fujikisection}
 Let $X$ be smooth variety of dimension $n$, and $\mu$
a partition of a nonegative integer $|\mu|$ (we allow the empty
partition of $0$).  To each $\mu$ we can associate a Chern monomial
$\chern{\mu}(X)=\prod_{i=1}^k\chern{k}^{\mu_k}(X)$. 
Given a formal power series $\phi(x)\in\Q[[x]]$, define the associated genus
\[\phi(X)=\prod_i\phi(x_i)\in H^*(X,\Q)\]
where the $x_i$ are the Chern roots of the tangent bundle $TX$.  Taking the universal formal power series
\[\Phi(x)=1+a_1x+a_2x^2+\cdots\in \Q[a_1,a_2,\ldots][[x]]\]
we define the universal genus $\Phi(X)$ of any smooth variety as
an element of $H^*(X,\Q)[a_1,a_2\ldots]$.  $\Phi(X)$
is a universal formal power series in the Chern classes
$\chern{1},\chern{2},\ldots$ with coefficients polynomials in
$a_1,a_2,\ldots$.  In particular, taking $a_1=1$ and $a_i=0$ for $i>1$, we get the total Chern class.  We will only need the universal genus for vanishing odd Chern classes; the reader may find the expansion of $\Phi$ in this case up to degree 16 in the appendix.

%\begin{align*}
%\Phi=1&+a_1c_1\\
%&+a_2c_1^2+(a_1^2-2a_2)c_2\\
%&+a_3c_1^3+(a_1a_2-3a_3)c_1c_2+(a_1^3-3a_1a_2+3a_3)c_3\\
%&+a_4c_1^4+ (a_1^2a_2 - 2a_2^2 - a_1a_3 + 4a_4)c_1c_3+
%(a_2^2 - 2a_1a_3 + 2a_4)c_2^2+ (a_1a_3 - 4a_4)c_1^2c_2\\
%&\hspace{1in}+ (a_1^4 - 4a_1^2a_2 + 2a_2^2 + 4a_1a_3 - 4a_4)c_4 + \cdots
%\end{align*}

Let $S$ be a smooth surface, $\phi(x)\in\Q[[x]]$ a formal power series in $x$.  Recall that $\O^{[n]}$ is the push-forward of the structure sheaf of the universal subscheme $Z\subset S\times S^{[n]}$ to $S^{[n]}$, and that $\det\O^{[n]}=-\delta$.  A result of \cite[Theorem 4.2]{lehn} implies that there are universal formal power
series $A(z),B(z)$ in $z$ such that
\[\sum_{n\geq
  0}z^n\int_{S^{[n]}}\exp(\det\O^{[n]})\phi(S^{[n]})=A(z)^{\chern{1}(S)^2}B(z)^{\chern{2}(S)}\]
  for any smooth surface $S$.  Let
\[\mathbf{F}_S(z)=\sum_{n\geq0}z^n\int_{S^{[n]}}\exp(\det\O^{[n]})\Phi(S^{[n]})\in \Q[a_1,a_2,\ldots][[z]]\]
and let $\mathbf{A}(z),\mathbf{B}(z)\in\Q[a_1,a_2,\ldots][[z]]$ be the universal power series associated to $\Phi$.  $\mathbf{F}_{\P^2}(z)=\mathbf{A}(z)^9\mathbf{B}(z)^3$ and $\mathbf{F}_{\P^1\times\P^1}(z)=\mathbf{A}(z)^8\mathbf{B}(z)^4$ can be easily computed by routine equivariant localization and therefore one can compute $\mathbf{A}(z),\mathbf{B}(z)$; see the appendix for a brief summary of the computation.
%\begin{align*}
%\mathbf{A}(z)&=1+a_2z+\left(- a_{1}^{3} + 3 a_{1}^{2} a_{2} + \frac{1}{4} a_{1}^{2} + a_{1} a_{2} - \frac{9}{2} a_{2}^{2} + a_{1} a_{3} + \frac{1}{6} a_{1} - \frac{3}{2} a_{2} + 3 a_{3} - 10 a_{4} - \frac{1}{48}\right) z^{2}\\
%\mathbf{B}(z)&=1+
%+ \left(a_{1}^{2} - 2 a_{2}\right) z +
%\left(2 a_{1}^{4} - 8 a_{1}^{2} a_{2} - \frac{5}{4} a_{1}^{2} + \frac{31}{2} a_{2}^{2} - 15 a_{1} a_{3} + \frac{5}{2} a_{2} + 15 a_{4} + \frac{1}{48}\right) z^{2} 
%\end{align*}
Since $\P^1\times\P^1,\P^2$ generate the cobordism ring, this determines $\mathbf{F}_S(z)$ for a $K3$ surface $S$, and in particular we can compute all products
\begin{equation}\int_{S^{[n]}}\delta^{2k}\chern{\mu}(S^{[n]})
\label{products}\end{equation}
By the following result of Fujiki, \eqref{products} determines all products of the form
\[\int_Xf^{2k}\chern{\mu}(X)\]
for arbitrary $f\in H^2(X,\Q)$:
\begin{theorem}\label{fujiki}\cite{fujiki}  For $X$ an irreducible holomorphic symplectic variety of dimension $n$ and $\mu$ an even partition of an integer $|\mu|$, there are 
rational constants $\gamma_X(\mu)$ such that, for any class $f\in
H^2(X,\mathbb{Z})$,
\[\int_Xf^{2k}\chern{\mu}(X)=\gamma_X(\mu)\cdot(f,f)^{k},\indent |mbox{for}\indent 2k=2n-|\mu|\]

Moreover, the constant $\gamma_X(\mu)$ is a deformation invariant.
\end{theorem}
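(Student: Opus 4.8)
This is Fujiki's theorem; I would prove it by recognizing $f\mapsto\int_X f^{2k}\chern{\mu}(X)$ as a homogeneous polynomial function on $H^2(X,\C)$ that is invariant under a group acting orthogonally for $(\cdot,\cdot)$, and then quoting classical invariant theory. Set $P_\mu(f)=\int_X f^{2k}\chern{\mu}(X)$, where $2k$ is forced by $4k+2|\mu|=2\dim_\C X$. Since cup product and $\int_X$ are multilinear, $P_\mu$ is the restriction to the diagonal of a symmetric $2k$-linear form on $H^2(X,\C)$, hence a homogeneous polynomial of degree $2k$, with rational coefficients because $\chern{\mu}(X)$ and the lattice $H^2(X,\Z)$ are defined over $\Q$.

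\textbf{Invariance step.} For $X$ of \K{n}-type the group $G_X=\SO(H^2(X,\C))$ acts on all of $H^*(X,\C)$ by ring automorphisms; being connected and semisimple (as $b_2\geq 3$) it acts trivially on the top cohomology, so $\int_X\circ\, g=\int_X$ for every $g\in G_X$. Moreover $\chern{\mu}(X)\in I^*(X)\subset H^*(X,\C)^{G_X}$, since the Chern classes of $TX$ are monodromy invariant and the $G_X$-representation extends the monodromy representation. Hence for $g\in G_X$,
\[
P_\mu(gf)=\int_X g(f)^{2k}\,\chern{\mu}(X)=\int_X g\!\left(f^{2k}\chern{\mu}(X)\right)=\int_X f^{2k}\chern{\mu}(X)=P_\mu(f),
\]
so $P_\mu$ is a $G_X$-invariant polynomial on the standard representation $H^2(X,\C)$.

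\textbf{Invariant theory, rationality, deformation invariance.} The ring of $\SO(V,q)$-invariant polynomials on a single copy of the standard representation $V$ of a nondegenerate quadratic form $q$ in $\dim V\geq 3$ variables is $\C[q]$ (the determinant/Pfaffian-type invariants distinguishing $\OO$ from $\SO$ require $\dim V$ copies of $V$). Therefore $P_\mu=\gamma_X(\mu)\cdot q^k$ with $q(f)=(f,f)$, and $\gamma_X(\mu)\in\Q$ upon comparing one coefficient, using the first paragraph. Finally, for a smooth proper family $\mathcal X\to B$ over a connected base, the parallel-transport isomorphisms $H^*(X_b,\Z)\cong H^*(X_{b'},\Z)$ preserve cup product and the fundamental class (topological), the Beauville-Bogomolov form (deformation invariant by Beauville), and the Chern classes $\chern{i}(T_{X_b})=\chern{i}(T_{\mathcal X/B})\big|_{X_b}$ (restrictions of a fixed class on $\mathcal X$); hence $\gamma_{X_b}(\mu)=P_\mu(f)/q(f)^k$ is locally constant, so constant.

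\textbf{Main obstacle.} The only genuinely non-formal input is the one used in the invariance step: a cup-product-compatible $G_X$-action on $H^*(X,\C)$ under which $\chern{\mu}(X)$ is invariant, which for \K{n}-type is Markman's theorem recalled above. For an arbitrary irreducible holomorphic symplectic $X$ this ingredient is unavailable, and one instead runs Fujiki's original twistor-family argument: along a twistor line the tangent-bundle Chern classes are constant and the three Kähler forms span a $3$-plane on which $(\cdot,\cdot)$ is Euclidean up to scale and rotated by $\SO(3)$, which forces $\int_X f^{2k}\chern{\mu}(X)$ to depend only on $(f,f)$ on a Zariski-dense set of classes $f$ of positive square, whence $P_\mu=\gamma_X(\mu)q^k$ on all of $H^2(X,\C)$ by density, and one concludes as above. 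Establishing this density (equivalently, sweeping out enough of $H^2$ by twistor planes of deformations) is the technical heart of the general statement.
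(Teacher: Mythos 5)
The paper gives no proof of this statement: it is imported verbatim from Fujiki (with the Chern-monomial refinement usually attributed to Gross--Huybrechts--Joyce), so there is no internal argument to compare yours against. Your proof is correct for the case the paper actually needs, $X$ of \K{n}-type, and it is a genuinely different (and historically inverted) route: you deduce Fujiki's relation from Markman's monodromy results --- the cup-product-compatible action of $G_X=\SO(H^2(X,\C))$ on $H^*(X,\C)$ fixing $\chern{\mu}(X)$ and, by connectedness and semisimplicity, the top class --- together with the first fundamental theorem for $\SO(V)$, which on a single copy of $V$ with $\dim V=23\geq 2$ yields $\C[V]^{\SO(V)}=\C[q]$ since the bracket invariants vanish when a vector argument is repeated. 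There is no circularity, as Markman's theorems do not rest on Fujiki's; what your route buys is a two-line derivation once the representation-theoretic framework of the paper's monodromy section is in place, whereas Fujiki's original twistor-family/period-map argument is what one must run for an arbitrary irreducible holomorphic symplectic variety, where no such $G_X$-action is available. You flag this correctly, and since the paper only ever applies the theorem to \K{4}-type varieties, the restriction is harmless. Your rationality and deformation-invariance steps (evaluate at an integral nonisotropic class, which exists by nondegeneracy of the lattice form; parallel transport preserves cup product, $\chern{\mu}$, the Beauville--Bogomolov form and the fundamental class) are fine; note also that your degree count $4k+2|\mu|=2\dim_\C X$ silently corrects the statement's ``dimension $n$'' to ``dimension $2n$''.
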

Of course, if $|\mu|>\dim X$, we have $\gamma_X(\mu)=0$. Also, because $X$ is holomorphic symplectic, all odd Chern
classes $\chern{i}(X)$ vanish, so we require $\mu$ to be an even partition.  We collect here the Fujiki
constants $\gamma(\mu)$ for $n=4$ for reference:
\begin{corollary}\label{fujikis}  For $X$ of \K{4}-type, we have
\[\begin{array}{rlrlrlll}
\gamma_X(2^4)=&1992240&\gamma_X(2^3)=&59640&\gamma_X(2^2)=&4932&\gamma_X(2^1)=630&\gamma_X(\varnothing)=105\\
\gamma_X(2^24^1)=&813240&\gamma_X(2^14^1)=&24360&\gamma_X(4^1)=&2016&&\\

\gamma_X(2^16^1)=&182340&\gamma(6^1)=&5460&&&\\
\gamma_X(8^1)=&25650&&&&&&\\
\gamma_X(4^2)=&332730&&&&&&\\
\end{array}\]
%\begin{align*}
%\gamma_X(4;\varnothing)&=105\\
%&\\
%\gamma_X(3;2^1)&=630\\
%&\\
%\gamma_X(2;2^2)&=4932\\
%\gamma_X(2;4^1)&=2016\\
%&\\
%\gamma_X(1;2^3)&=59640\\
%\gamma_X(1;2^1,4^1)&=24360\\
%\gamma(1;6^1)&=5460\\
%&\\
%\gamma_X(0;2^4)&=1992240\\
%\gamma_X(0;2^2,4^1)&=813240\\
%\gamma_X(0;2^1,6^1)&=182340\\
%\gamma_X(0;8^1)&=25650\\
%\gamma_X(0;4^2)&=332730\\
%\end{align*}
\end{corollary}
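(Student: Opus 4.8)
The plan is to combine Fujiki's theorem with the Lehn-type generating function quoted above. By Theorem \ref{fujiki} each constant $\gamma_X(\mu)$ is a deformation invariant, so it suffices to compute it for $X=S^{[4]}$ with $S$ a $K3$ surface; moreover we may take $f=\delta$, for which $(\delta,\delta)=2-2\cdot 4=-6$. Hence for every even partition $\mu$ with $2k=8-|\mu|\geq 0$ we need only evaluate the single intersection number $\int_{S^{[4]}}\delta^{2k}\chern{\mu}(S^{[4]})$ and divide by $(-6)^k$; the pairs $(\mu,k)$ with $2k+|\mu|=8$ and $\mu$ even are precisely the twelve listed in the statement, so the task reduces to computing twelve rational numbers.

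\textbf{Step 1: the generating function.} By the result of Lehn quoted above there are universal series $\mathbf{A}(z),\mathbf{B}(z)\in\Q[a_1,a_2,\ldots][[z]]$ with
\[\mathbf{F}_S(z)=\sum_{n\geq 0}z^n\int_{S^{[n]}}\exp(\det\O^{[n]})\Phi(S^{[n]})=\mathbf{A}(z)^{\chern{1}(S)^2}\mathbf{B}(z)^{\chern{2}(S)}\]
for every smooth surface $S$. Computing $\mathbf{F}_{\P^2}(z)=\mathbf{A}(z)^9\mathbf{B}(z)^3$ and $\mathbf{F}_{\P^1\times\P^1}(z)=\mathbf{A}(z)^8\mathbf{B}(z)^4$ modulo $z^5$ by torus localization on the toric Hilbert schemes $(\P^2)^{[n]}$ and $(\P^1\times\P^1)^{[n]}$ for $n\leq 4$, and solving the two relations, determines $\mathbf{A}(z)$ and $\mathbf{B}(z)$ modulo $z^5$ (a $K3$ surface carries no such action, so the detour through toric surfaces is exactly the point of the universality statement). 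Since $\P^2$ and $\P^1\times\P^1$ generate the complex cobordism ring and a $K3$ surface has $\chern{1}(S)=0$ and $\chern{2}(S)=24$, this gives $\mathbf{F}_S(z)\equiv\mathbf{B}(z)^{24}\pmod{z^5}$.

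\textbf{Step 2: reading off the constants.} The coefficient of $z^4$ in $\mathbf{F}_S(z)$ is $\int_{S^{[4]}}\exp(-\delta)\,\Phi(S^{[4]})$ since $\det\O^{[4]}=-\delta$; only the degree-$16$ part of the integrand contributes, and the odd powers of $\delta$ drop out because the odd Chern classes of the holomorphic symplectic manifold $S^{[4]}$ vanish, so this coefficient equals $\sum_{k\geq 0}\frac{1}{(2k)!}\int_{S^{[4]}}\delta^{2k}\,[\Phi(S^{[4]})]_{16-4k}$. Expanding each $[\Phi(S^{[4]})]_{16-4k}$ as the universal polynomial in $\chern{2},\chern{4},\chern{6},\chern{8}$ with coefficients polynomial in the $a_i$ (the needed expansion of $\Phi$ through degree $16$ is recorded in the appendix) and comparing the coefficient of each monomial in the $a_i$ on the two sides isolates every integral $\int_{S^{[4]}}\delta^{2k}\chern{\mu}(S^{[4]})$ separately; dividing by $(-6)^k$ produces the table. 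As a consistency check, the empty partition yields $\gamma_X(\varnothing)=\frac{8!}{4!\,2^4}=105$, the Fujiki constant of the Beauville--Bogomolov form on \K{4}-type manifolds, consistent with $\delta^8=105\cdot(-6)^4=136080$.

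\textbf{Main obstacle.} The genuine content is the localization computation of Step 1: one must enumerate the monomial-ideal fixed points of $(\P^2)^{[n]}$ and $(\P^1\times\P^1)^{[n]}$ for $n\leq 4$, record the tangent weights at each, and evaluate the universal genus $\Phi$ (truncated at degree $16$) against them. This is lengthy bookkeeping rather than a conceptual difficulty, and it is summarized in the appendix; once $\mathbf{A}(z)$ and $\mathbf{B}(z)$ are known modulo $z^5$, Steps 1 (reduction) and 2 (extraction) are routine linear algebra in the formal variables $a_i$.
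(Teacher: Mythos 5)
Your proposal is correct and follows essentially the same route as the paper: deformation invariance reduces everything to $S^{[4]}$ with $f=\delta$, $(\delta,\delta)=-6$, and the twelve intersection numbers are extracted from the degree-$4$ coefficient of $\mathbf{F}_S(z)=\mathbf{B}(z)^{24}$, with $\mathbf{A}$ and $\mathbf{B}$ determined by localization on $(\P^2)^{[n]}$ and $(\P^1\times\P^1)^{[n]}$ exactly as in the appendix. The paper's stated proof is just a two-line compression of this argument, so there is nothing to add.
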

\begin{proof}This follows from the deformation invariance and
the degree 4 part of
\[\mathbf{F}_S(z)=\mathbf{B}(z)^{24}\]
for $S$ a $K3$ surface.  Note that $(\delta,\delta)=-6$.
\end{proof}
\begin{remark}  The first column of numbers are the Chern numbers of $X$, and were computed in \cite{lehn}; $\gamma_X(\varnothing)$ is the ordinary Fujiki constant.  The authors are unaware of a computation of the middle three columns in the literature.
\end{remark}
\subsection{Generalized Fujiki constants}Let $X$ be of \K{n}-type.  In general, for $\eta$ a Hodge class, an integral of the form $\int_X f^{2k}\eta$ must be compatible with the $G_X$ action, and therefore will be a rational multiple of $(f,f)^k$.  For $\eta$ a product of a power of $\theta$ and a Chern monomial, these ratios are determined by the Fujiki constants of the previous section.

Define an augmented partition $(\ell,\mu)$ to be a partition
$\mu$ of a nonnegative integer $|\mu|$ and a nonegative integer $\ell$.  Set
\[|(\ell,\mu)|=2\ell+|\mu|\]
 \begin{proposition}\label{genfujiki}For $X$ of \K{n}-type, $n>1$, and $(\ell,\mu)$ an augmented even partition, there is a
 rational constant $\gamma_X(\ell,\mu)$ such that for any $f\in H^2(X,\mathbb{Z})$,
 \[\int_Xf^{2k}\theta^\ell\chern{\mu}(X)=\gamma_X(\ell,\mu)\cdot(f,f)^{k}, \indent \mbox{for}\indent  2k=2n-2\ell-|\mu|\]
Furthermore, there are rational constants $\alpha(k,\ell)$
independent of $X$ such that
\[\gamma_X(\ell,\mu)=\alpha(k,\ell)\gamma_X(\mu), \indent\mbox{for}\indent 2k=2n-2\ell -|\mu|\]
\end{proposition}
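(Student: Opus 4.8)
The plan is to get the first assertion exactly as Fujiki's theorem (Theorem \ref{fujiki}) is proved, and the second from the representation theory of $G_X$ on the subalgebra of $H^*(X,\C)$ generated by degree-two classes.

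For existence I would note that $\theta$ and the $\chern{i}(X)$ are invariant under parallel transport in families, hence so is $\eta:=\theta^\ell\chern{\mu}(X)$, and parallel transport also fixes the fundamental class; therefore $f\mapsto\int_X f^{2k}\eta$ is a $\Mon^2(X)$-invariant homogeneous polynomial of even degree $2k$ on $H^2(X,\C)$. Its stabilizer in $\GL(H^2(X,\C))$ is Zariski closed and contains $\Mon^2(X)$, hence contains $\overline{\Mon^2}(X)\supseteq G_X=\SO(H^2(X,\C))$; since $\dim_\C H^2(X,\C)=23>2$, the $\SO$-invariant homogeneous polynomials of degree $2k$ are exactly the scalar multiples of $(f,f)^k$, so $\int_X f^{2k}\eta=\gamma_X(\ell,\mu)(f,f)^k$ for a constant $\gamma_X(\ell,\mu)$. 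Evaluating at an $f\in H^2(X,\Q)$ with $(f,f)\neq0$ shows it is rational, and it is a deformation invariant because $\theta$, the $\chern{i}(X)$, $\int_X$ and $(\cdot,\cdot)$ all are. This is the proof of Fujiki's theorem verbatim, with the Chern monomial replaced by the parallel-transport-invariant class $\eta$.

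For the universal ratio I would introduce $\mathrm{SH}^*(X)\subseteq H^*(X,\C)$, the subalgebra generated by $H^2(X,\C)$. Since $G_X$ acts on $H^*(X,\C)$ by ring automorphisms preserving $H^2(X,\C)$, it preserves $\mathrm{SH}^*(X)$, which is therefore a semisimple $G_X$-module; let $\pi$ be, in each degree, the canonical $G_X$-equivariant projection onto the $G_X$-invariants. Now $\theta=\sum_i g_i\cup g_i^\vee$ (with $\{g_i\},\{g_i^\vee\}$ dual bases of $H^2(X)$ for $(\cdot,\cdot)$) lies in $\mathrm{SH}^4(X)$, so $f^{2k}\theta^\ell$ and $f^{2(k+\ell)}$ both lie in $\mathrm{SH}^{4(k+\ell)}(X)$, the image of cup product from $\Sym^{2(k+\ell)}H^2(X)$. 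As $\bigl(\Sym^{2(k+\ell)}H^2(X)\bigr)^{G_X}$ is one-dimensional, spanned by the $(k+\ell)$-th power of the form, and a $G_X$-equivariant surjection is surjective on invariants, I get $\mathrm{SH}^{4(k+\ell)}(X)^{G_X}=\C\,\theta^{k+\ell}$, which is nonzero because $\theta^{k+\ell}\cup\theta^{\,n-k-\ell}=\theta^n$ and $\int_X\theta^n=\gamma_X(n,\varnothing)\neq0$ (for $n=4$ this is Lemma \ref{theta}). Equivariance of $\pi$ and invariance of $\theta$ then force $\pi(f^{2k}\theta^\ell)=\alpha_1(f,f)^k\theta^{k+\ell}$ and $\pi(f^{2(k+\ell)})=\alpha_2(f,f)^{k+\ell}\theta^{k+\ell}$ for scalars $\alpha_1,\alpha_2$ depending only on $k$, $\ell$ and the deformation type of $X$ (each coefficient is a $G_X$-invariant polynomial in $f$ of the stated degree, hence a multiple of the corresponding power of $(f,f)$).

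Finally I would use that $\int_X\psi\,\xi=\int_X\pi(\psi)\,\xi$ for $\psi\in\mathrm{SH}^{4(k+\ell)}(X)$ and any parallel-transport-invariant $\xi$ of complementary degree, since the Poincar\'e pairing is $G_X$-invariant and so the non-invariant isotypic part of $\psi$ pairs trivially with the invariant $\xi$. Taking $\xi=\chern{\mu}(X)$ gives $\gamma_X(\ell,\mu)(f,f)^k=\alpha_1(f,f)^k\int_X\theta^{k+\ell}\chern{\mu}(X)$ and $\gamma_X(\mu)(f,f)^{k+\ell}=\alpha_2(f,f)^{k+\ell}\int_X\theta^{k+\ell}\chern{\mu}(X)$, while taking $\xi=\theta^{\,n-k-\ell}$ identifies $\alpha_1=\gamma_X(n-k,\varnothing)/\gamma_X(n,\varnothing)$ and $\alpha_2=\gamma_X(n-k-\ell,\varnothing)/\gamma_X(n,\varnothing)\neq0$. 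Dividing the two identities yields $\gamma_X(\ell,\mu)=(\alpha_1/\alpha_2)\gamma_X(\mu)$, so I set $\alpha(k,\ell):=\alpha_1/\alpha_2=\gamma_X(n-k,\varnothing)/\gamma_X(n-k-\ell,\varnothing)$, a rational number depending only on $k$ and $\ell$. I expect the delicate point to be the identification $\mathrm{SH}^{4(k+\ell)}(X)^{G_X}=\C\theta^{k+\ell}$ — that is, the nonvanishing of $\theta^{k+\ell}$ and the absence of further $G_X$-invariants in that degree of $\mathrm{SH}^*(X)$ — together with the nonvanishing of $\gamma_X(n-k-\ell,\varnothing)$ that makes the ratio meaningful; both are transparent for $n=4$ from the explicit constants in Lemma \ref{theta}.
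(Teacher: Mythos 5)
Your first paragraph is fine and is essentially what the paper does implicitly: $\theta^\ell\chern{\mu}(X)$ is monodromy-invariant, so the integral is a $G_X$-invariant degree-$2k$ polynomial in $f$, hence a multiple of $(f,f)^k$. Your projection argument in the second and third paragraphs is also correct as far as it goes (the identification $\mathrm{SH}^{4(k+\ell)}(X)^{G_X}=\C\,\theta^{k+\ell}$ and the pairing identity $\int_X\psi\,\xi=\int_X\pi(\psi)\,\xi$ both hold, granting the nonvanishing statements you flag), and it cleanly produces the relation $\gamma_X(\ell,\mu)=\bigl(\gamma_X(n-k,\varnothing)/\gamma_X(n-k-\ell,\varnothing)\bigr)\gamma_X(\mu)$.

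The genuine gap is the last clause: you assert that $\gamma_X(n-k,\varnothing)/\gamma_X(n-k-\ell,\varnothing)$ "depend[s] only on $k$ and $\ell$," but this ratio visibly involves $n$ through the exponents of $\theta$, and nothing in your argument rules out a dependence on the deformation type of $X$. That independence is precisely the nontrivial content of the proposition — the paper itself says "the interesting part is the existence of the $\alpha$" — and it is what Lemma \ref{constants} uses to import the values $\alpha(0,1),\alpha(1,1),\alpha(0,2),\dots$ from explicit computations on $S^{[3]}$ into the $S^{[4]}$ setting; with an $n$-dependent constant the whole transfer collapses. The paper's proof is engineered exactly to make this manifest: it expands $f^{2k}\theta^\ell$ as a linear combination $\sum_a\lambda_a\,\overline{p}^{\,2k+2\ell}(a)$ of symmetrized $(2k+2\ell)$-th powers of linear forms in a fixed orthonormal basis of $H^2(X,\C)$. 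This is an identity inside $\Sym^{2(k+\ell)}H^2(X,\C)$, a space of fixed rank $23$ for every $n>1$, so the coefficients $\lambda_a$ are literally the same for all $n$; applying the plain Fujiki relation (Theorem \ref{fujiki}) to each power $\bigl(\sum_ia_ix_{\sigma(i)}\bigr)^{2k+2\ell}\chern{\mu}(X)$ then yields $\alpha(k,\ell)=\frac{1}{23^k}\sum_a\lambda_a\bigl(\sum_ia_i^2\bigr)^{k+\ell}$, which is manifestly universal. To repair your proof you would either need this decomposition step, or an independent combinatorial argument that the ratio $\gamma_X(n-k,\varnothing)/\gamma_X(n-k-\ell,\varnothing)$ (equivalently, the ratio of contractions of $q^{\otimes n}$ against $f^{\otimes 2k}\otimes(q^\vee)^{\otimes(n-k)}$ and $f^{\otimes 2(k+\ell)}\otimes(q^\vee)^{\otimes(n-k-\ell)}$) is the same for all $n$ — which is not something you can get for free from Schur's lemma on a single $X$.
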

Again, $\gamma_X(k,\ell,\mu)=0$ if $|(\ell,\mu)|> \dim X$.
\begin{proof}As mentioned above, the interesting part is the
existence of the $\alpha$.  Let $x_i$ be an orthonormal basis of $H^2(X,\C)$ with respect to the Beauville-Bogomolov form.  Note that $\theta = \sum_i x_i^2$.  It suffices to consider the case $f=\sum_i x_i$, which has $(f,f)=23$.  Let 
\[p^{k}(a)=\left(\sum_i a_ix_i\right)^k\]
for $a\in\Q^{23}$.  The $p^k(a)$ span the space of degree
$k$ polynomials in $x_i$, so their
symmetrizations
\[\overline{p}^k(a)=\frac{1}{23!}\sum_{\sigma\in
  S_{23}}\left(\sum_i a_ix_{\sigma(i)}\right)^k\]
span the space of degree $k$ symmetric functions in $x_i$.  We
can therefore write
\[f^{2k}\theta^\ell=\sum_{a(k,\ell)}\lambda_{a(k,\ell)}\overline{p}^{2k+2\ell}(a(k,\ell))\]
where the sum is over finitely many $a(k,\ell)$.  This expression has no dependence on the dimension of $X$. We have
\begin{align*}
\int_Xf^{2k}\theta^\ell\chern{\mu}(X)&=\frac{1}{23!}\sum_{a(k,\ell)}\lambda_{a(k,\ell)}\sum_{\sigma\in
  S_{23}}\int_X \left(\sum_i a(k,\ell)_ix_{\sigma(i)}\right)^{2k+2\ell}\chern{\mu}(X)\\
&=\frac{1}{23!}\sum_{a(k,\ell)}\lambda_{a(k,\ell)}\sum_{\sigma\in
  S_{23}}\left(\sum_i
a(k,\ell)_ix_{\sigma(i)},
\sum_i
a(k,\ell)_ix_{\sigma(i)}
\right)^{k+\ell}\gamma_X(\mu)\\
&=\left(\sum_{a(k,\ell)}\lambda_{a(k,\ell)}\left(\sum_i
a(k,\ell)_i^2\right)^{k+\ell}\right)\gamma_X(\mu)\\
&=\alpha(k,\ell)\gamma_X(\mu)(f,f)^{k+\ell}
\end{align*}
where
\[\alpha(k,\ell)=\frac{1}{23^k}\sum_{a(k,\ell)}\lambda_{a(k,\ell)}\left(\sum_i
a(k,\ell)_i^2\right)^{k+\ell}
\]
\end{proof}
Explicitly,
\[\int_X \theta\chern{\mu}(X)=\sum_i\int_X x_i^2\chern{\mu}(X)=\sum_i(x_i,x_i)\gamma_X(\mu)=23\cdot\gamma_X(\mu)\]
so $\alpha(0,1)=23$.  Less trivially,
\begin{align*}
\int_X\theta^2\chern{\mu}(X)&=\int_X\left(\sum_i
x_i^2\right)^2\chern{\mu}(X)\\
&=\int_X\left(\frac{1}{6}\sum_{i<j}(x_i+x_j)^4+\frac{1}{6}\sum_{i<j}(x_i-x_i)^4-\frac{19}{3}\sum_ix_i^4\right)\chern{\mu}(X)=\frac{575}{3}\cdot\gamma_X(\mu)
\end{align*}
The relevant values of the $\alpha$ constants can be computed from Lemma \ref{theta} and by reducing to the \K{3}-type case:
 \begin{lemma}\label{constants}We have
 \[\begin{array}{llll}
 \alpha(0,1)=23&\alpha(1,1)=\frac{25}{3}&\alpha(2,1)=\frac{27}{5}&\alpha(3,1)=\frac{29}{7}\\
 \alpha(0,2)=\frac{575}{3}&\alpha(1,2)=45&\alpha(2,2)=\frac{783}{35}&\\
 \alpha(0,3)=1035&\alpha(1,3)=\frac{1305}{7}&&\\
\alpha(0,4)=\frac{30015}{7}&&
\end{array}\]
 \end{lemma}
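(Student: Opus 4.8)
\textbf{Overview.} The plan is to pin down each $\alpha(k,\ell)$ as a single rational number, using the explicit description of these constants that already emerges in the proof of Proposition~\ref{genfujiki}. Fix an orthonormal basis $x_1,\dots,x_{23}$ of $H^2(X,\C)$ and set $f=\sum_i x_i$, so $(f,f)=23$ and $\theta=\sum_i x_i^2$ in $\Sym^2 H^2(X,\C)$. That proof shows that for a homogeneous polynomial $P$ of degree $2m=2n-|\mu|$ in the $x_i$ one has $\int_X P\,\chern{\mu}(X)=\gamma_X(\mu)\,\mathcal L(P)$, where $\mathcal L$ is the unique $\OO(23)$-invariant functional on degree-$2m$ forms with $\mathcal L\!\left((\sum_i a_ix_i)^{2m}\right)=(\sum_i a_i^2)^m$; hence $\alpha(k,\ell)=\mathcal L(f^{2k}\theta^\ell)/23^k$ with $2m=2(k+\ell)$. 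The elementary point one needs is that $\mathcal L$ is a normalized Gaussian moment: if $\xi=(\xi_1,\dots,\xi_{23})$ is a standard Gaussian vector, then $\mathcal L(P)=\mathbb{E}[P(\xi)]/(2m-1)!!$ on degree-$2m$ forms, because $\mathbb{E}[\langle a,\xi\rangle^{2m}]=(2m-1)!!\,\|a\|^{2m}$ and the powers $\langle a,\cdot\rangle^{2m}$ span.

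\textbf{The constants $\alpha(0,\ell)$.} Here $\mathcal L(\theta^\ell)=\mathbb{E}\!\left[(\sum_i\xi_i^2)^\ell\right]/(2\ell-1)!!$ and $\sum_i\xi_i^2\sim\chi^2_{23}$, whose $\ell$-th moment is $\prod_{j=0}^{\ell-1}(23+2j)$. Thus
\[
\alpha(0,\ell)=\frac{\prod_{j=0}^{\ell-1}(23+2j)}{(2\ell-1)!!},
\]
giving $\alpha(0,1)=23$, $\alpha(0,2)=\tfrac{575}{3}$, $\alpha(0,3)=\tfrac{23\cdot 25\cdot 27}{15}=1035$, and $\alpha(0,4)=\tfrac{23\cdot 25\cdot 27\cdot 29}{105}=\tfrac{30015}{7}$.

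\textbf{The constants $\alpha(k,\ell)$ with $k\geq 1$.} Here one splits off the $f$-direction. Writing $\theta=\tfrac1{23}f^2+\theta'$, the class $\theta'$ corresponds under the Gaussian model to $\|\xi-\tfrac{1}{23}\langle\xi,\mathbf{1}\rangle\mathbf{1}\|^2$, a $\chi^2_{22}$ variable \emph{independent} of $\langle\xi,\mathbf{1}\rangle=\sum_i\xi_i$ (the Fisher--Cochran decomposition of a Gaussian vector). Expanding
\[
f^{2k}\theta^\ell=\sum_{j=0}^{\ell}\binom{\ell}{j}23^{-j}\,f^{2k+2j}\,(\theta')^{\ell-j},
\]
taking $\mathbb{E}$, and using $\mathbb{E}[(\sum_i\xi_i)^{2p}]=(2p-1)!!\,23^{\,p}$ together with $\mathbb{E}[(\chi^2_{22})^{q}]=\prod_{i=0}^{q-1}(22+2i)$, one obtains
\[
\alpha(k,\ell)=\frac{1}{(2(k+\ell)-1)!!}\sum_{j=0}^{\ell}\binom{\ell}{j}\,(2(k+j)-1)!!\,\prod_{i=0}^{\ell-j-1}(22+2i),
\]
with the empty product equal to $1$. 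Substituting $(k,\ell)=(1,1),(2,1),(3,1),(1,2),(2,2),(1,3)$ yields $\tfrac{25}{3},\tfrac{27}{5},\tfrac{29}{7},45,\tfrac{783}{35},\tfrac{1305}{7}$ respectively.

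\textbf{Consistency, and the route suggested by the statement.} The four values with $k+\ell=4$ can alternatively be read straight off Lemma~\ref{theta} applied to $X=S^{[4]}$: since $\gamma_{S^{[4]}}(\varnothing)=105$ and $(\delta,\delta)=-6$, Proposition~\ref{genfujiki} gives $105\cdot(-6)^{4-\ell}\,\alpha(4-\ell,\ell)=\int_{S^{[4]}}\delta^{2(4-\ell)}\theta^\ell$, so $\alpha(3,1)=\tfrac{435}{105}=\tfrac{29}{7}$, $\alpha(2,2)=\tfrac{2349}{105}=\tfrac{783}{35}$, $\alpha(1,3)=\tfrac{19575}{105}=\tfrac{1305}{7}$, $\alpha(0,4)=\tfrac{450225}{105}=\tfrac{30015}{7}$. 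The remaining constants, all with $k+\ell\leq 3$, follow in the same way from the analogous integrals on $S^{[2]}$ and $S^{[3]}$ (via the Lehn--Sorger computation exactly as in Lemma~\ref{theta}, or from \cite{moving,HHT}), the point being that $\alpha(k,\ell)$ is independent of which \K{n}-type variety is used to compute it, as long as $n\geq\max(2,k+\ell)$. There is no serious obstacle: the only input genuinely needed beyond Lemma~\ref{theta} is this lower-$n$ cup-product data if one takes the reduction route, or nothing at all if one uses the moment formula. The two cautions to keep in mind are that these constants are attached to $b_2=23$, so one must work on a variety of dimension at least $k+\ell$ (the $K3$ surface itself, with $b_2=22$, is excluded), and, in the moment approach, that the factor $\theta'$ really is independent of $f$.
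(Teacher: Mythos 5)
Your proposal is correct, and every numerical value checks out against the stated lemma, but it reaches them by a genuinely different route than the paper. The paper reads off the four constants with $k+\ell=4$ from the explicit $S^{[4]}$ intersection numbers in Lemma \ref{theta} (via $\gamma_{S^{[4]}}(\varnothing)=105$ and $(\delta,\delta)=-6$), and then, invoking the dimension-independence of $\alpha(k,\ell)$ established in Proposition \ref{genfujiki}, imports the remaining constants with $k+\ell\leq 3$ from the corresponding cup-product computations on $S^{[3]}$ in \cite{HHT}. You instead evaluate the $\OO(23)$-invariant functional $\mathcal L$ of Proposition \ref{genfujiki} in closed form, by recognizing it as a normalized Gaussian moment and exploiting the independence of $\langle\xi,\mathbf 1\rangle$ from the orthogonal $\chi^2_{22}$ component; this yields the uniform formula
\[
\alpha(k,\ell)=\frac{1}{(2(k+\ell)-1)!!}\sum_{j=0}^{\ell}\binom{\ell}{j}\,(2(k+j)-1)!!\,\prod_{i=0}^{\ell-j-1}(22+2i),
\]
which I have verified reproduces all ten entries of the table. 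What your approach buys is self-containedness and scope: it needs no input from \cite{HHT} and no Lehn--Sorger cup-product data, it produces all $\alpha(k,\ell)$ at once rather than case by case, and it even serves as an independent consistency check on Lemma \ref{theta} itself (e.g.\ $\theta^4=105\cdot\tfrac{30015}{7}=450225$). What the paper's route buys is brevity given data already computed elsewhere. Your two cautionary remarks are both apt, and in particular the observation that the constants are tied to $b_2=23$ is exactly why the $22$ and $23$ appear in your formula.
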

Of course, $\alpha(k,0)=1$ for any $k$.
 \begin{proof} $\alpha(3,1),
 \alpha(2,2),\alpha(1,3),\alpha(0,4)$ are all determined by
 Lemma \ref{theta}, using $\gamma_{S^{[4]}}(\varnothing)=105$.  Because $\alpha(k,\ell)$ is independent of the dimension of $X$, we can determine the remaining $\alpha$ constants from
 the computations of \cite{HHT} in the \K{3}-type cases, where
 \begin{align*}
 (\theta_{S^{[3]}})^3=15525&=1035\cdot\gamma_{S^{[3]}}(\varnothing)\\
 (\delta_{S^{[3]}})^2(\theta_{S^{[3]}})^2=-2700&=45\cdot(\delta_{S^{[3]}},\delta_{S^{[3]}})\cdot\gamma_{S^{[3]}}(\varnothing)\\
 (\delta_{S^{[3]}})^4(\theta_{S^{[3]}})=1296&=\frac{27}{5}\cdot(\delta_{S^{[3]}},\delta_{S^{[3]}})^2\cdot\gamma_{S^{[3]}}(\varnothing)\\
 (\theta_{S^{[3]}})^2\chern{2}(S^{[3]})=20700&=\frac{575}{3}\cdot\gamma_{S^{[3]}}(2^1)\\
 (\delta_{S^{[3]}})^2(\theta_{S^{[3]}})\chern{2}(S^{[3]})=-3600&=\frac{25}{3}\cdot(\delta_{S^{[3]}},\delta_{S^{[3]}})\cdot\gamma_{S^{[3]}}(2^1)
\end{align*}
since $\gamma_{S^{[3]}}(\varnothing)=15, \gamma_{S^{[3]}}(2^1)=108$ and $\chern{2}(S^{[3]})=\frac{4}{3}\theta_{S^{[3]}}$.

\end{proof}
\subsection{A geometric basis}
$I^8(X)$ is 3-dimensional, so we expect there to be a relation among $\theta^2,\theta\chern{2}(X),\chern{2}(X)^2,\chern{4}(X)$:
\begin{lemma}  For $X$ of \K{4}-type,
\begin{equation}\theta^2=\frac{7}{5}\theta c_2-\frac{31}{60}c_2^2+\frac{1}{15}c_4\label{reln}\end{equation}
\end{lemma}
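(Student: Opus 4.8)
The statement is an identity in the $3$-dimensional space $I^8(X)$ among the four $G_X$-invariant classes $\theta^2$, $\theta c_2$, $c_2^2$, $c_4$; since the space is $3$-dimensional, these four must satisfy at least one linear relation, and the plan is simply to pin down the coefficients by pairing the putative relation against enough test classes and solving the resulting linear system. The natural pairings to use are integration against a sufficiently general degree-$8$ class $f^{2k}$ (here $k=0$, so just integration over $X$) — but that alone gives only one equation, so I would also pair with $f^2\cdot(\text{degree }6\text{ invariant})$-type expressions, or more efficiently pair the degree-$8$ relation multiplied by the appropriate power of a general $f\in H^2$ and integrate.

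**Key steps, in order.** First, record the four generalized Fujiki integrals $\int_X \theta^2 c_2 \cdots$; concretely, using Proposition \ref{genfujiki} and Lemma \ref{constants}, together with Corollary \ref{fujikis}, compute $\int_X \theta^2 f^0 \cdot(\text{nothing})$ is not meaningful in degree $8$ alone, so instead I would form three independent linear functionals on $I^8(X)$: integration of $\theta^2 \cdot\chi$, $\theta c_2\cdot \chi$, $c_2^2\cdot\chi$, $c_4\cdot\chi$ against each of $\chi \in \{\theta^2 c_2,\, \theta c_2^2,\, c_2^3\}$ (and one more such as $c_2 c_4$ or $\theta c_4$), each of which evaluates to an explicit rational number via $\int_X \theta^\ell c_\mu(X) = \alpha(k,\ell)\gamma_X(\mu)$ once we know $k$ and the even partition $\mu$. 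Second, assemble the $4\times 4$ (or, using only three independent test classes, $3\times 3$) matrix of these numbers and solve for the unique-up-to-scale null vector $(1, -\tfrac75, \tfrac{31}{60}, -\tfrac1{15})$, which is exactly \eqref{reln}. Third, verify the solution is consistent — i.e. that the null space is genuinely $1$-dimensional, which is guaranteed by $\dim I^8(X)=3$ and the fact that the four classes span (equivalently that no smaller relation holds); alternatively one can double-check by pairing against the Lagrangian-plane-relevant class if available, or simply against a fourth test class as a sanity check (this is the role played by the "consistency check" style verifications earlier in the paper).

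**Main obstacle.** The only real content is having the generalized Fujiki constants in hand: one must know all of $\gamma_X(\varnothing), \gamma_X(2^1), \gamma_X(2^2), \gamma_X(4^1), \gamma_X(2^3), \gamma_X(2^2 4^1), \gamma_X(2^14^1), \gamma_X(4^2), \dots$ from Corollary \ref{fujikis}, and the $\alpha(k,\ell)$ from Lemma \ref{constants}, and then the computation is a finite, purely mechanical linear-algebra exercise over $\Q$. So the "hard part" is entirely front-loaded into Section \ref{fujikisection} (the localization computation of $\mathbf{A}(z),\mathbf{B}(z)$) and Lemma \ref{constants}; granting those, deriving \eqref{reln} is routine. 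One subtlety worth flagging: to get a full-rank system one needs the test classes $\chi$ to have the property that $\{\theta^2,\theta c_2,c_2^2,c_4\}$ pair nondegenerately into $\Q$ through them, which is why using several different even partitions $\mu$ (varying which Chern monomial appears) rather than just powers of $\theta$ is important — powers of $\theta$ alone would only see the $\gamma_X(\varnothing)$ row and collapse the system.
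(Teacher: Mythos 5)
Your overall strategy---produce a linear system from the generalized Fujiki constants whose kernel is the relation, using that $\dim I^8(X)=3$ forces a relation among the four invariant classes---is exactly the paper's. The paper implements it in the most economical way: it writes down the $4\times 4$ Gram matrix of $\theta^2,\theta c_2,c_2^2,c_4$ under the Poincar\'e pairing (each entry is a degree-$16$ integral of the form $\alpha(0,\ell)\gamma_X(\mu)$, read off from Lemma \ref{constants} and Corollary \ref{fujikis}), checks that it has rank $3$, and takes a generator of the kernel. Poincar\'e duality together with $G_X$-invariance guarantees the pairing is nondegenerate on $I^8(X)$, so a kernel vector of the Gram matrix is a genuine cohomological relation and not merely a class orthogonal to the chosen tests; the rank-$3$ computation also shows a posteriori that the four classes do span $I^8(X)$ and that the relation is unique up to scale.

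The one concrete defect in your write-up is the choice of test classes. Since $\dim_{\C}X=8$, the complementary degree to $H^8(X)$ is again $8$, so the test classes must be degree-$8$ invariants. The classes you propose---$\theta^2c_2$, $\theta c_2^2$, $c_2^3$, $\theta c_4$, $c_2c_4$---all lie in $H^{12}(X)$, so their products with $\theta^2,\theta c_2,c_2^2,c_4$ land in $H^{20}(X)=0$ and every entry of your system vanishes identically: as written, the computation yields no information. The fix is immediate (take $\chi\in\{\theta^2,\theta c_2,c_2^2,c_4\}$, i.e.\ the Gram matrix, or $\chi\in\{f^4,f^2\theta,f^2c_2\}$ for general $f\in H^2$), and with that correction your argument coincides with the paper's. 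A smaller quibble: your closing remark that ``powers of $\theta$ alone would only see the $\gamma_X(\varnothing)$ row'' is not quite right---the Chern monomial in each integral $\int_X\chi\cdot\theta^\ell c_\mu$ is supplied by the four classes being tested, not by $\chi$, so even $\chi=f^4$ produces a row involving $\gamma_X(\varnothing),\gamma_X(2^1),\gamma_X(2^2),\gamma_X(4^1)$; the genuine issue is only whether the resulting rows are independent, which the explicit rank computation settles.
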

\begin{proof}Using the results of the previous section, we know the
 intersection form restricted to $I^8(X)$ in terms of the basis $\theta^2,\theta\chern{2}(X),\chern{2}(X)^2,\chern{4}(X)$:
\begin{equation}\begin{pmatrix}
450225&1035\cdot630&\frac{575}{3}\cdot4932&\frac{575}{3}\cdot2016\\
1035\cdot 630&\frac{575}{3}\cdot4932&23\cdot 59640&23\cdot 24360\\
\frac{575}{3}\cdot4932&23\cdot 59640&1992240&813240\\
\frac{575}{3}\cdot2016&23\cdot 24360&813240&332730
\end{pmatrix}\label{kernel}\end{equation}
As expected, the matrix is rank 3.  By Poincar\'{e} duality, a generator of the kernel gives the relation.
\end{proof}

\begin{corollary}\label{chern}
$\chern{2}(S^{[4]})=3Z+33Y-W$
\end{corollary}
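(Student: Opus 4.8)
The plan is to express $\chern{2}(S^{[4]})$ in the basis $\{W, X, Y, Z\}$ of $I_\delta^4(S^{[4]})$ by computing enough intersection numbers to pin down the four unknown coefficients. Write $\chern{2}(S^{[4]}) = aW + bX + cY + dZ$. Since $\chern{2}(S^{[4]})$ is a Hodge class invariant under $G_S$, it indeed lies in $I_\delta^4(S^{[4]})$, so such an expression exists. To solve for $a,b,c,d$, I would pair both sides against a spanning set of linear functionals on $I_\delta^4(S^{[4]})$ — concretely, multiply by each of $W,X,Y,Z$ in turn and integrate over $S^{[4]}$, using the degree-4 multiplication table together with the identification of $H^{16}(S^{[4]},\Q)$ via the point class. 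The left-hand sides of these four equations are the numbers $\int_{S^{[4]}} \chern{2}(S^{[4]})\cdot W$, etc.

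The key point is that these left-hand sides are all computable. Each of $W, X, Y, Z$ is, up to a correction by $\delta^2$, either a power of $\delta$ or a class supported on a sublocus whose contribution can be read off, but more systematically: by \eqref{bogoform} and \eqref{d4}, the subspace spanned by $\{W,X,Y,Z\}$ also contains $\theta$ and $\delta^2$, and in fact $\theta, \delta^2$, together with, say, $Y$ and $Z$ (or any two more of the generators) form a basis. So it suffices to compute the five numbers $\int_{S^{[4]}}\chern{2}(S^{[4]})\,\delta^4$, $\int_{S^{[4]}}\chern{2}(S^{[4]})\,\delta^2\theta$, $\int_{S^{[4]}}\chern{2}(S^{[4]})\,\theta^2$, $\int_{S^{[4]}}\chern{2}(S^{[4]})^2$, and $\int_{S^{[4]}}\chern{2}(S^{[4]})\,\chern{4}(S^{[4]})$ — but all of these are among the Fujiki-type integrals $\int_X f^{2k}\theta^\ell\chern{\mu}(X)$ handled by Corollary \ref{fujikis}, Proposition \ref{genfujiki}, and Lemma \ref{constants} (together with the $\mathbf{F}_S(z)$ computation). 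So the recipe is: (i) pick a convenient four-element subset of $\{W,X,Y,Z,\delta^2,\theta\}$ that is a basis and whose change-of-basis matrix to $\{W,X,Y,Z\}$ is known from \eqref{bogoform}, \eqref{d4}; (ii) pair $\chern{2}(S^{[4]})$ against those four classes, getting a linear system with right-hand sides computed via the multiplication table and left-hand sides computed via the generalized Fujiki constants; (iii) solve and change basis.

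I do not expect any genuine obstacle here — the result is essentially a bookkeeping consequence of the two independent computations already in hand (the Lehn–Sorger multiplication table on $I_\delta^*(S^{[4]})$ and the Fujiki constants). The one thing to be careful about is consistency of normalizations between the two computations, in particular the point-class normalization $\pt_{S^{[4]}} = 24\,\pt_1\otimes\pt_2\otimes\pt_3\otimes\pt_4(\id)$ from \eqref{pointclass} and the value $(\delta,\delta)=-6$; a good sanity check, analogous to the $\delta^8 = 136080$ check already performed, is to verify that the resulting class satisfies $\int_{S^{[4]}}\chern{2}(S^{[4]})\,\delta^6 = \alpha(0,3)\,\gamma_{S^{[4]}}(2^1)\cdot(\delta,\delta)^3$ and that $\chern{2}(S^{[4]})$ so computed is orthogonal (via \eqref{kernel}-type pairings) to the appropriate classes, or simply that it reproduces the known Chern numbers $\int \chern{2}^2$ and $\int\chern{2}\chern{4}$ of $S^{[4]}$.
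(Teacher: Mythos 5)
Your overall strategy (express $\chern{2}(S^{[4]})$ in the basis $W,X,Y,Z$ and pin down the coefficients by intersecting with classes whose products with $\chern{2}$ are computable from the Fujiki machinery) is the paper's strategy, but your execution has a genuine gap, beyond the degree bookkeeping (the quantities you list, e.g.\ $\int_{S^{[4]}}\chern{2}(S^{[4]})\,\delta^4$ and $\int_{S^{[4]}}\chern{2}(S^{[4]})^2$, live in degrees $12$ and $8$, not in $H^{16}$, and pairing the degree-$4$ class $\chern{2}$ against the degree-$4$ classes in your set $\{W,X,Y,Z,\delta^2,\theta\}$ does not produce numbers). The substantive problem is your claim that a \emph{linear} system suffices. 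The only linear functionals on $I^4_\delta(S^{[4]})$ that you can evaluate on $\chern{2}$ without already knowing $\chern{2}$ in the $W,X,Y,Z$ basis are $\eta\mapsto\int\eta\,\delta^{2k}\theta^{\ell}$ with $k+\ell=3$; there are only four such monomials ($\theta^3,\delta^2\theta^2,\delta^4\theta,\delta^6$), and the paper computes that the resulting $4\times4$ matrix has rank $2$, so these conditions determine $\chern{2}$ only up to a two-parameter family. You cannot enlarge this set of test classes usefully: degree-$12$ classes built from $W,X,Y,Z$ or $\alpha$ have unknown intersection with $\chern{2}$ (they are not Fujiki-type integrals), and test classes involving $\chern{2}$ or $\chern{4}$ turn the conditions into polynomial equations in the unknown coefficients ($\chern{4}$ moreover contributes nothing new by the relation \eqref{reln}).

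This is exactly why the paper's proof is not pure bookkeeping: after the rank-$2$ linear step it imposes the \emph{quadratic} conditions $\int\delta^{2k}\theta^{\ell}\chern{2}^2$ for $k+\ell=2$ (computing $\chern{2}^2$ via the Lehn--Sorger table as a quadratic form in the unknown coefficients and equating with $\alpha(k,\ell)\gamma_{S^{[4]}}(2^2)(\delta,\delta)^k$), which still leaves \emph{two} solutions $(u,v)=(\tfrac{21}{4},-9)$ and $(\tfrac{497}{116},-\tfrac{285}{29})$, and only the quartic condition $\chern{2}^4=1992240$ eliminates the spurious one. Your proposal is missing both the recognition that the linear system is degenerate and the mechanism (nonlinear conditions plus ruling out an extraneous root) that actually completes the determination.
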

\begin{proof}
Suppose $\chern{2}(S^{[4]})=wW+xX+yY+zZ$ for $w,x,y,z\in\Q$. Taking the product with
$\theta^3,\delta^2\theta^2,\delta^4\theta,\delta^4$ yields the
equation
\[\begin{pmatrix}
-6075&-2700&\frac{30375}{2}&\frac{96525}{2}\\
15066&6696&-3213&-16335\\
-19116&-8496&1854&14058\\
29160&12960&-1620&-17820
\end{pmatrix}
\begin{pmatrix}
w\\x\\y\\z
\end{pmatrix}
=\begin{pmatrix}
652050\\-170100\\122472\\-136080
\end{pmatrix}
\]
The matrix has rank 2.  Computing generators of the kernel, we can write
\[\chern{2}(S^{[4]})=\left(-\frac{4}{9}u-\frac{4}{27}v\right)W+\left(u-\frac{21}{4}\right)X+(v+42)Y-\frac{v}{3}Z\]
Similarly, computing $\chern{2}(S^{[4]})^2$ and intersecting with
$\theta^2,\delta^2\theta,\delta^4$ yields 3 equations: $945300=\theta^2\chern{2}(S^{[4]})^2$, $-246600=\theta\delta^2\chern{2}(S^{[4]})$ and $177552=\delta^4\chern{2}(S^{[4]})$ which have exactly two common solutions: $(u,v)=(\frac{21}{4},-9),(\frac{497}{116},-\frac{285}{29})$.
Finally, only one of these solutions, $(u,v)=(\frac{21}{4},-9)$, satisfies the additional equation $
1992240=\chern{2}(S^{[4]})^4$, and this gives the desired equation.
\end{proof}
Recall that $I^{4}(X)$ is 2-dimensional, whereas $I^{4}_\lambda(X)$ is 4-dimensional.  We already have $\lambda^2\in I^{4}_\lambda(X)$.  We need one more geometrically defined class in $I^4_\lambda(X)$ independent from $\lambda^2$ and $I^4(X)$ to get a basis for $I^4_\lambda(X)$: 
\begin{definition}\label{lambdaness}
Given a class $\lambda\in  H^2(X,\Q)$ (with $(\lambda,\lambda)\neq 0$ so no power of $\lambda$ is zero), define $\alpha\in I^4_\lambda(X)$ by Poincar\'{e} duality to be the unique class (up to a multiple) that intersects trivially with $\lambda^6$ and $I^{12}(X)$.
\end{definition}
\begin{lemma}For $X=S^{[4]}$ and $\lambda=\delta$, we may take $\alpha=X-3Y+Z$ which intersects trivially with
\[\delta^4\theta,\delta^4\chern{2}(S^{[4]}),\delta^2\theta^2,\delta^2\theta\chern{2}(S^{[4]}),\delta^2\chern{2}(S^{[4]})^2,\theta^3,\theta^2\chern{2}(S^{[4]}),\theta\chern{2}(S^{[4]})^2,\chern{2}(S^{[4]})^3\]
Further, $\alpha^2\theta^2=9450$, $\alpha^2\theta\chern{2}(S^{[4]})=14148$ and $\alpha^2\chern{2}(S^{[4]})^2=21168$.
\end{lemma}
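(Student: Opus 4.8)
The plan is to verify the claimed identity $\alpha = X - 3Y + Z$ by first confirming that this class is orthogonal (with respect to the Poincar\'{e} pairing) to the nine classes spanning $\lambda^6\cdot H^2(X,\Q)^2 + I^{12}(X)$ listed in the statement, and then computing the three self-intersection numbers $\alpha^2\theta^2$, $\alpha^2\theta c_2$, $\alpha^2 c_2^2$. All of these are concrete computations in the ring $I^*_\delta(S^{[4]})$, for which the excerpt has already assembled every ingredient we need: the multiplication tables for the $W,X,Y,Z$ basis in degree $4$ and the $A,\ldots,H$ basis in degree $8$, the expression \eqref{bogoform} for $\theta$, the expression from Corollary \ref{chern} for $c_2(S^{[4]}) = 3Z + 33Y - W$, and the top-degree pairing normalization $\pt_{S^{[4]}} = 24\,\pt_1\otimes\pt_2\otimes\pt_3\otimes\pt_4\cdot(\id)$.

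First I would compute the products $\delta^2\theta$, $\delta^2 c_2$, $\delta^4$ and $\theta^2$, $\theta c_2$, $c_2^2$ in the $A,\ldots,H$ basis, either directly via the degree-$4$ multiplication table or by substituting \eqref{bogoform} and Corollary \ref{chern}; note $\delta^4$ is already recorded in \eqref{d4}. Then, using the diagonal degree-$8$ pairing table (the one with entries $\tfrac{176}{24}, \tfrac{8}{24}$ on the $A$-$A$ and $B$-$C$ slots, etc.), I would pair $X - 3Y + Z$ against each of the nine classes and check each inner product vanishes. Equivalently and more cleanly, by Poincar\'{e} duality it suffices to check that $X-3Y+Z$ pairs to zero with $\delta^6$, $\delta^4\theta$, $\delta^4 c_2$, and with all of $I^{12}(X) = I^{12}(S^{[4]})$, which is spanned by degree-$12$ monomials in $\theta, c_2, c_4$; since $I^8(X)$ is only $3$-dimensional this reduces to a handful of pairings. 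The three self-intersections $\alpha^2\theta^\ell c_2^{2-\ell}$ are then read off the same tables. This uniquely pins down $\alpha$ up to scale by Definition \ref{lambdaness}, since the kernel of the pairing of $I^4_\delta$ against the nine-dimensional span is one-dimensional.

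The one genuinely delicate point is bookkeeping rather than mathematics: the classes $X, Y, Z$ are $G_S$-invariant Nakajima-type symmetrizations, and multiplying two of them via \eqref{multiplication} requires correctly tracking the graph defect $g(\sigma,\tau)$ and the $\euler = -24\pt$ insertions, exactly as in the sample computation of $\delta^2$ in Section 1.4. I would therefore lean on the already-tabulated products $X^2 = -2D-2E+2F+G+H$, $XY = 2D$, $XZ = 22D+4E$, $Y^2 = 2F$, $YZ = G$, $Z^2 = 22F+2G+2H$ rather than recompute them, and only perform the linear-algebra combinations. As a consistency check — mirroring the $\delta^8 = 136080$ check in the excerpt — I would verify that the resulting values of $\alpha^2\theta^2$, $\alpha^2\theta c_2$, $\alpha^2 c_2^2$ are compatible with the generalized Fujiki relations and with the relation \eqref{reln}; for instance $\alpha^2\theta^2 = 9450$, $\alpha^2\theta c_2 = 14148$, $\alpha^2 c_2^2 = 21168$ should satisfy the single linear relation obtained by multiplying \eqref{reln} by $\alpha^2$, namely $9450 = \tfrac{7}{5}\cdot 14148 - \tfrac{31}{60}\cdot 21168 + \tfrac{1}{15}(\alpha^2 c_4)$, which determines $\alpha^2 c_4$ and serves as an internal cross-check that the orthogonality computation was done correctly.
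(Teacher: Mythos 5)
Your proposal is correct and follows essentially the same route as the paper: a direct computation in the Lehn--Sorger basis using the tabulated degree-4 products, equation \eqref{bogoform}, Corollary \ref{chern}, and the degree-8 pairing table, the paper's only extra economy being the observation that $\theta^3$ and $\theta^2\chern{2}(S^{[4]})$ are independent in the $2$-dimensional $I^{12}(S^{[4]})$, so the last four orthogonality checks reduce to two. The one caveat is that your ``equivalently, pair against $\delta^6$, $\delta^4\theta$, $\delta^4\chern{2}(S^{[4]})$ and $I^{12}$'' reformulation is not obviously equivalent --- the classes $\delta^2\theta^2$, $\delta^2\theta\chern{2}(S^{[4]})$, $\delta^2\chern{2}(S^{[4]})^2$ need not a priori lie in the span of the ones you propose to check --- so you should carry out the direct pairing against all nine classes as in your primary plan.
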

\begin{proof}  By intersecting with $\theta$ and $\chern{2}(S^{[4]})$
using Corollary \ref{fujikis} and Lemma \ref{constants}, we see that $\theta^3$ and
$\theta^2\chern{2}(S^{[4]})$ are independent in $I^{12}(S^{[4]})$, so it is enough to show that $\alpha$ intersects these two classes to conclude it intersects trivially with each of the four degree 12 Hodge classes at the end of the list.  This, along with all the other claimed products, follow from Corollary \ref{chern}, equation \eqref{bogoform}, and our knowledge of the product structure.  Indeed,  
\begin{align*}
\alpha^2&=-3G + 30D + 42F + 3H + 6E\\
\alpha\delta^2&=-18B+162C\\
\alpha\theta&=88D + 8E - 27C + 3B - 88F + 20G + 4H\\
\alpha\chern{2}(S^{[4]})&=-54C + 132D + 6B + 12E + 30G +
6H - 132F\\
\theta^2&=-8E + \frac{19}{2}H + \frac{215}{2}G - 64D - \frac{33}{4}A - \frac{97}{4}B +
1117F - \frac{873}{4}C\\
\delta^4&=-81A- 81B- 729C- 192D-96E+ 84F + 30G + 6H\\
\theta\chern{2}(S^{[4]})&=-\frac{27}{2}A -
\frac{747}{2}C - \frac{83}{2}B - 8E +
1630F + 153G + 13H - 48D\\
\chern{2}(S^{[4]})^2&=18H - 8E - 69B - 8D + 218G - 21A - 621C + 2380F
\end{align*}
and the pairwise products are easily computed.  

\end{proof}
Because the cup-product structure on $H^*(S^{[4]},\Z)$ is preserved under deformation, and the monodromy  group acts transitively on rays in $H^2(S^{[4]},\Q)$, we immediately conclude the same for arbitrary $\lambda$
:
\begin{corollary}\label{alpha}For $\alpha$ chosen as in Definition \ref{lambdaness} with respect to
$\lambda\in H^2(X,\Z)$, $\alpha$ intersects trivially with
\[\lambda^4\theta,\lambda^4\chern{2}(X),\lambda^2\theta^2,\lambda^2\theta\chern{2}(X),\lambda^2\chern{2}(X)^2,\theta^3,\theta^2\chern{2}(X),\theta\chern{2}(X)^2,\chern{2}(X)^3\]
Further, up to a rational square, $\alpha^2\theta^2=9450$, $\alpha^2\theta\chern{2}(X)=14148$ and $\alpha^2\chern{2}(X)^2=21168$.
\end{corollary}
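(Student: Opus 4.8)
The plan is to transport the preceding Lemma --- which establishes all of these assertions for the pair $(S^{[4]},\delta)$ --- to an arbitrary $X$ of \K{4}-type and an arbitrary $\lambda\in H^2(X,\Z)$ with $(\lambda,\lambda)\neq 0$, by manufacturing a single isomorphism $\Phi\colon H^*(X,\C)\xrightarrow{\sim}H^*(S^{[4]},\C)$ that respects every relevant structure. First I would take parallel transport $\Psi\colon H^*(X,\Z)\to H^*(S^{[4]},\Z)$ along a path from $X$ to a Hilbert scheme of $4$ points: this is a ring isomorphism sending the fundamental class to the fundamental class, $\theta$ to $\theta$ and $\chern{i}(X)$ to $\chern{i}(S^{[4]})$ by deformation invariance, and an isometry on $H^2$, so it conjugates $G_X$ to $G_{S^{[4]}}$ and hence carries $I^*(X)\otimes\C$ and $I^*_\lambda(X)\otimes\C$ onto $I^*(S^{[4]})\otimes\C$ and $I^*_{\Psi\lambda}(S^{[4]})\otimes\C$. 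Since $(\Psi\lambda,\Psi\lambda)\neq 0$ and $\SO(H^2(S^{[4]},\C))$ acts transitively on lines through non-isotropic vectors, I would then pick $g\in G_{S^{[4]}}$ with $g(\C\,\Psi\lambda)=\C\delta$ and extend it to a cup-product-compatible automorphism of $H^*(S^{[4]},\C)$ via \cite[Proposition 4.1]{HHT}; this $g$ fixes $\theta$ and each $\chern{i}$, and acts trivially on the one-dimensional top cohomology (the semisimple group $\SO(H^2(S^{[4]},\C))$ has no nontrivial characters), so it preserves the integration pairing. Then $\Phi:=g\circ\Psi$ is a ring isomorphism with $\int_{S^{[4]}}\Phi(-)=\int_X(-)$, $\Phi(\theta)=\theta$, $\Phi(\chern{i}(X))=\chern{i}(S^{[4]})$, $\Phi(\lambda)=c\delta$ for some $c\in\C^\times$, and $\Phi$ carrying $I^{12}(X)\otimes\C$ and $I^4_\lambda(X)\otimes\C$ onto $I^{12}(S^{[4]})\otimes\C$ and $I^4_\delta(S^{[4]})\otimes\C$.

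Next I would pin down $\Phi(\alpha)$: since $\Phi$ preserves $\int$ and sends $\C\lambda^6\to\C\delta^6$, $I^{12}(X)\to I^{12}(S^{[4]})$ and $I^4_\lambda(X)\to I^4_\delta(S^{[4]})$ (all $\otimes\C$), it maps the subspace of $I^4_\lambda(X)\otimes\C$ orthogonal to $\lambda^6$ and to $I^{12}(X)$ isomorphically onto the subspace of $I^4_\delta(S^{[4]})\otimes\C$ orthogonal to $\delta^6$ and to $I^{12}(S^{[4]})$, which by the preceding Lemma is $\C(X-3Y+Z)=\C\,\alpha_\delta$; in particular this subspace is one-dimensional for $X$ too, so $\alpha=\alpha_\lambda$ is well defined up to a nonzero rational scalar and $\Phi(\alpha_\lambda)=s\,\alpha_\delta$ for some $s\in\C^\times$. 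For each $\eta$ in the displayed list, $\Phi(\eta)$ is a nonzero scalar multiple of the corresponding class from the list in the preceding Lemma (for instance $\Phi(\lambda^2\theta\chern{2}(X))=c^2\delta^2\theta\chern{2}(S^{[4]})$, $\Phi(\theta^3)=\theta^3$, $\Phi(\chern{2}(X)^3)=\chern{2}(S^{[4]})^3$), so $\int_X\alpha_\lambda\cdot\eta=s\int_{S^{[4]}}\alpha_\delta\cdot\Phi(\eta)=0$. Likewise $\int_X\alpha_\lambda^2\theta^2=s^2\int_{S^{[4]}}\alpha_\delta^2\theta^2=9450\,s^2$, and in the same way $\int_X\alpha_\lambda^2\theta\chern{2}(X)=14148\,s^2$ and $\int_X\alpha_\lambda^2\chern{2}(X)^2=21168\,s^2$ with one and the same scalar $s$. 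Each left-hand side is an intersection number of rational classes, hence rational, so $s^2=\tfrac{1}{9450}\int_X\alpha_\lambda^2\theta^2\in\Q^\times$; this is precisely the claim that the three products equal $9450,14148,21168$ up to a common rational square.

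The main obstacle is bookkeeping rather than anything deep: one must verify that a single map $\Phi$ simultaneously respects the cup product, the degree-$16$ trace, the classes $\theta$ and $\chern{i}$, the invariant subrings $I^*$ and $I^*_\lambda$, and the line $\C\lambda$, which forces the construction to combine deformation invariance (for the parallel-transport factor) with Markman's extension of the $\SO(H^2)$-representation to all of $H^*$ (for the rotation factor). The one genuinely substantive subtlety is that $\alpha$ is defined only up to scalar while the comparison factor $s$ between $\alpha_\lambda$ and $\alpha_\delta$ is a priori merely complex --- this is exactly why the three nonzero numbers can be pinned down only up to a common rational square, with the rationality of $s^2$ recovered at the end from rationality of intersection numbers together with $9450\neq 0$.
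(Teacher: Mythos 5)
Your argument is correct and is precisely the paper's (one-sentence) proof made explicit: the paper also deduces the corollary from deformation invariance of the cup product together with the transitive action of the monodromy group on rays in $H^2$, which is exactly your $\Phi=g\circ\Psi$. Your closing observation that the comparison scalar $s$ is only pinned down to the extent that $s^2\in\Q^\times$ is exactly why the statement reads ``up to a rational square.''
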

\subsection{Middle cohomology}
Putting Lemma \ref{constants} and Corollaries \ref{fujikis}, and \ref{alpha} together, we now know the complete intersection form on middle
cohomology $I^8_\lambda(X)$ with respect to the basis:
\begin{equation}\label{basis}\lambda^4,\lambda^2\theta,\lambda^2\chern{2}(X),\theta^2,\theta
\chern{2}(X),\chern{2}(X)^2,\alpha\theta,\alpha\chern{2}(X)\end{equation}
Denoting it by $M(\lambda)$, it is:
\[\begin{pmatrix}
105(\lambda,\lambda)^4&435(\lambda,\lambda)^3&630(\lambda,\lambda)^3&2349(\lambda,\lambda)^2&3402(\lambda,\lambda)^2&4932(\lambda,\lambda)^2&&\\
435(\lambda,\lambda)^3&2349(\lambda,\lambda)^2&3402(\lambda,\lambda)^2&19575(\lambda,\lambda)&28350(\lambda,\lambda)&44110(\lambda,\lambda)&&\\
630(\lambda,\lambda)^3&3402(\lambda,\lambda)^2&4932(\lambda,\lambda)^2&28350(\lambda,\lambda)&44110(\lambda,\lambda)&59640(\lambda,\lambda)&&\\
2349(\lambda,\lambda)^2&19575(\lambda,\lambda)&28350(\lambda,\lambda)&450225&652050&945300&&\\
3402(\lambda,\lambda)^2&28350(\lambda,\lambda)&41100(\lambda,\lambda)&652050&945300&1371720&&\\
4932(\lambda,\lambda)^2&41100(\lambda,\lambda)&59640(\lambda,\lambda)&945300&1371720&1992240&&\\
&&&&&&9450&14148\\
&&&&&&14148&21168\\
\end{pmatrix}\]
Note that this matrix is nonsingular if $(\lambda,\lambda)\neq0$, and therefore \eqref{basis} is in fact a basis.
 \section{Lagrangian $n$-planes in $X$}
 Let $X$ be a $2n$ dimensional holomorphic symplectic variety,
 and suppose that $\P^n\subset X$ is a smoothly embedded
 Lagrangian $n$-plane.  By a simple calculation,
 \begin{lemma}\cite{HHT}\label{restrict}Denote by $h$ the hyperplane class on $\P^n$.
 Then in the above setup,
 \[\chern{2j}(T_X|_{\P^n})=(-1)^{j}h^{2j}\binom{n+1}{j}\]
 \end{lemma}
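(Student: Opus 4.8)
The plan is to reduce everything to the Euler sequence on projective space, using the Lagrangian hypothesis to pin down the normal bundle. The key input is the standard fact that if $\P^n\subset X$ is a smooth Lagrangian submanifold of a holomorphic symplectic variety with nowhere degenerate $2$-form $\omega$, then contraction with $\omega$ yields an isomorphism $T_X|_{\P^n}\xrightarrow{\cong}\Omega^1_X|_{\P^n}$, and this isomorphism carries the subbundle $T_{\P^n}$ onto the conormal bundle $N^*_{\P^n/X}$ --- the latter being precisely the statement that $\P^n$ is isotropic of half dimension. Passing to cokernels and dualizing then gives $N_{\P^n/X}\cong\Omega^1_{\P^n}$.

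Granting this, I would write the normal bundle exact sequence
\[0\to T_{\P^n}\to T_X|_{\P^n}\to N_{\P^n/X}\to 0\]
and substitute $N_{\P^n/X}\cong\Omega^1_{\P^n}$, so that by the Whitney product formula $c(T_X|_{\P^n})=c(T_{\P^n})\cdot c(\Omega^1_{\P^n})$ in $H^*(\P^n,\Z)$. The Euler sequence $0\to\O\to\O(1)^{\oplus(n+1)}\to T_{\P^n}\to 0$ gives $c(T_{\P^n})=(1+h)^{n+1}$, and dualizing (negating the Chern roots) gives $c(\Omega^1_{\P^n})=(1-h)^{n+1}$. Multiplying,
\[c(T_X|_{\P^n})=(1+h)^{n+1}(1-h)^{n+1}=(1-h^2)^{n+1}=\sum_{j=0}^{n+1}(-1)^j\binom{n+1}{j}h^{2j},\]
and reading off the degree-$2j$ component gives $\chern{2j}(T_X|_{\P^n})=(-1)^jh^{2j}\binom{n+1}{j}$, while the odd Chern classes all vanish.

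There is essentially no obstacle here: the only step invoking any hypothesis is the identification $N_{\P^n/X}\cong\Omega^1_{\P^n}$, which is where both the Lagrangian condition and the holomorphic symplecticity of $X$ enter, and which is exactly what forces only even Chern classes to appear. Everything else is formal Chern class calculus on $\P^n$.
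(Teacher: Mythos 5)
Your proof is correct and follows the same route as the paper: the normal bundle sequence together with $N_{\P^n/X}\cong T^*_{\P^n}$ (forced by the Lagrangian condition), then the Whitney formula and the Euler sequence to get $c(T_X|_{\P^n})=(1+h)^{n+1}(1-h)^{n+1}=(1-h^2)^{n+1}$. You simply spell out in more detail the standard fact that contraction with $\omega$ identifies the normal bundle with the cotangent bundle, which the paper takes as given.
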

 \begin{proof}
 We have
 \[0\into T_{\P^n}\into T_X|_{\P^n}\into N_{\P^n/X}\into0\]
 and since $\P^n$ is Lagrangian, $N_{\P^n/X}\cong T^*_{\P^n}$, so
 \[\chern{}(T_X|_{\P^n})=(1+h)^{n+1}(1-h)^{n+1}=(1-h^2)^{n+1}\]
 \end{proof}
 Let $\theta$ be the Beauville-Bogomolov class.  Then for $n=4$,
 \begin{lemma}\label{thetares}$\theta|_{\P^4}=-\frac{7}{2}h^2$.
 \end{lemma}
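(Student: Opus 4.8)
The plan is to pull the universal relation \eqref{reln} back to $\P^4$ and read off the coefficient of $\theta|_{\P^4}$.

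Since $H^4(\P^4,\Q)$ is one–dimensional, spanned by $h^2$, I may write $\theta|_{\P^4}=t\,h^2$ for a unique $t\in\Q$, and the claim is that $t=-\tfrac{7}{2}$. The key input is that \eqref{reln} is a genuine identity in $H^8(X,\Q)$, not merely a numerical coincidence: the four classes $\theta^2,\ \theta\chern{2}(X),\ \chern{2}(X)^2,\ \chern{4}(X)$ all lie in the $3$–dimensional space $I^8(X)$, hence are linearly dependent, and the coefficients are exactly those exhibited in the lemma establishing \eqref{reln} (a class in $I^8(X)$ orthogonal to $I^8(X)$ is automatically orthogonal to all of $H^8(X,\Q)$, by $G_X$–equivariance of the cup product pairing into $H^{16}(X,\Q)\cong\triv$, hence is zero by Poincar\'e duality). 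Restricting \eqref{reln} along $\P^4\hookrightarrow X$ and substituting the values supplied by Lemma \ref{restrict} with $n=4$, namely $\chern{2}(T_X)|_{\P^4}=-5h^2$ and $\chern{4}(T_X)|_{\P^4}=10h^4$, the relation becomes an equation in $H^8(\P^4,\Q)=\Q\cdot h^4$:
\[
t^2=\tfrac{7}{5}\,t\,(-5)-\tfrac{31}{60}\cdot 25+\tfrac{1}{15}\cdot 10=-7t-\tfrac{49}{4}.
\]
Therefore $\left(t+\tfrac{7}{2}\right)^2=0$, so $t=-\tfrac{7}{2}$ and $\theta|_{\P^4}=-\tfrac{7}{2}h^2$, as claimed.

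There is essentially no obstacle here beyond the work already invested in \eqref{reln} and Lemma \ref{restrict}; the only point worth remarking on is that the quadratic in $t$ turns out to be a perfect square, so the single relation \eqref{reln} already pins $t$ down uniquely. Had the discriminant been nonzero one would instead need to pull back a second, independent relation among Hodge classes on $X$ — for instance one involving the class $\alpha$ of Definition \ref{lambdaness}, or a higher Chern monomial — in order to single out the geometric value of $t$.
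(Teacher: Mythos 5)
Your proof is correct and is essentially the paper's own argument: restrict the identity \eqref{reln} to $\P^4$, substitute $\chern{2}(T_X)|_{\P^4}=-5h^2$ and $\chern{4}(T_X)|_{\P^4}=10h^4$ from Lemma \ref{restrict}, and observe that the resulting quadratic $t^2+7t+\tfrac{49}{4}=0$ is a perfect square, forcing $t=-\tfrac{7}{2}$. The remarks on why \eqref{reln} holds as an identity in $H^8(X,\Q)$ are a reasonable elaboration of the paper's appeal to Poincar\'e duality, but the core computation is identical.
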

 \begin{proof}
 Let $\theta|_{\P^4}=nh^2$.  Equation \eqref{reln} implies that $60n^2=7\cdot12n(-5)-31(-5)^2+4(10)$ which implies the lemma.
\end{proof}
Finally, the last intersection theoretic piece of data we need
is
\begin{equation}\label{self}
[\P^4]^2=\chern{4}(N_{\P^4/X})=\chern{4}(T_{\P^4}^*)=5
\end{equation}
since $\P^4$ is Lagrangian.

Assume now that $X$ is deformation equivalent to a Hilbert
scheme of 4 points on a $K3$ surface.  Let $\ell\in H_2(X,\Z)$ be the class of the line, and $\lambda=6\ell\in H^2(X,\Z)$, via the embedding $H^2(X,\Z)\subset H_2(X,\Z)$ induced by the Beauville-Bogomolov form.  Note that $\lambda|_{\P^4}= \frac{(\lambda,\lambda)}{6} h$ since $\langle \lambda|_{\P^4},\ell\rangle=\langle \lambda,\ell\rangle=\frac{1}{6}(\lambda,\lambda)$ by the definition of $\lambda$.  Then
\[[\P^4]=a\lambda^4+b\lambda^2\theta+c\lambda^2\chern{2}(X)+d\theta^2+e\theta\chern{2}(X)+f\chern{2}(X)^2+g\theta\alpha+h\chern{2}(X)\alpha\]
Assume that $\alpha|_{\P^4}=yh^2$, for $y\in\Q$.  Intersecting this class with each of \eqref{basis}, 
\[\lambda^4,\lambda^2\theta,\lambda^2\chern{2}(X),\theta^2,\theta
\chern{2}(X),\chern{2}(X)^2,\alpha\theta,\alpha\chern{2}(X)\] yields by Lemmas \ref{restrict} and \ref{thetares} the equation
\begin{equation}
M(\lambda)[\P^4]=\begin{pmatrix}
\left(\frac{(\lambda,\lambda)}{6}\right)^4\\
-\frac{7}{2}\left(\frac{(\lambda,\lambda)}{6}\right)^2\\
-5\left(\frac{(\lambda,\lambda)}{6}\right)^2\\
\frac{49}{4}\\
\frac{35}{2}\\
25\\
-\frac{7}{2}y\\
-5y
\end{pmatrix}
\end{equation}
from which it follows that
\begin{equation}[\P^4]=
\begin{pmatrix}
\frac{1}{608256}\left(25+\frac{700}{(\lambda,\lambda)}+\frac{1764}{(\lambda,\lambda)^2}\right)\\
-\frac{1}{2737152}\left(25(\lambda,\lambda)+3276+\frac{15876}{(\lambda,\lambda)}\right)\\
\frac{1}{38016}\left(23+\frac{126}{(\lambda,\lambda)}\right)\\
\frac{1}{5474304}\left((\lambda,\lambda)^2+252(\lambda,\lambda)-41148\right)\\
-\frac{1}{190080}\left(5(\lambda,\lambda)-2142\right)\\
-\frac{1}{240}\\
\frac{31y}{1188}\\
-\frac{7y}{396}
\end{pmatrix}
\label{vector}\end{equation}
Finally, \eqref{self} yields:

\[5=\frac{25}{788299776}x^4 + \frac{175}{98537472}x^3 + \frac{403}{10948608}x^2 - \frac{7}{2376}y^2 +
\frac{7}{33792}x + \frac{65}{67584}\]
where $x=(\lambda,\lambda)$.  This may be rewritten as
\begin{equation}y^2=\frac{5^2}{2^{12}\cdot 3^4\cdot
  7}x^4+\frac{5^2}{2^9\cdot 3^4}x^3+\frac{13\cdot 31}{2^9\cdot
  3^2\cdot 7}x^2+\frac{3^2}{2^7}x-\frac{3^2\cdot 5\cdot 7^2\cdot
  197}{2^8}\label{dios}\end{equation}
Note that while we may have $y\in\Q$, $x$ must be integral.  Also note that there is a solution compatible with Conjecture \ref{conj}, namely $(x,y)=(-126,0)$.  By the analysis of the next section,
\begin{proposition}The only solution of \eqref{dios} with $x\in \Z$ and $y\in\Q$ is $(x,y)=(-126,0)$.
\end{proposition}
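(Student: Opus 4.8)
The plan is to show that the only integral value of $x$ for which the right-hand side of \eqref{dios} is a rational square is $x=-126$, and then check that this forces $y=0$. Since $y\in\Q$ and the right-hand side is a polynomial in $x$ with rational coefficients, clearing denominators turns \eqref{dios} into the statement that a certain integral quartic in $x$ becomes a perfect square (up to a fixed square factor) when multiplied by a suitable constant; in other words, $(x,y)$ gives a rational point on the affine curve \eqref{dios}, which is a genus-one curve. The strategy is therefore: (i) put \eqref{dios} into Weierstrass form, (ii) use the known arithmetic of the resulting elliptic curve over $\Q$ to enumerate its integral (or $S$-integral) points, and (iii) translate back to confirm that $x=-126$, $y=0$ is the unique solution with $x\in\Z$.

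First I would multiply \eqref{dios} through to clear the denominator $2^{12}\cdot 3^4\cdot 7$ of the leading coefficient, writing $Y = c\,y$ for an appropriate rational constant $c$ so that $Y^2 = q(x)$ with $q(x)\in\Z[x]$ a quartic with nonzero leading coefficient; completing the square on the leading term (or using the standard birational transformation for $Y^2 = a_0x^4 + a_1x^3 + \cdots$) produces an elliptic curve $E/\Q$ in Weierstrass form together with an explicit birational map between the two models. One should keep careful track of this map so that rational points on $E$ can be pulled back to pairs $(x,y)$, and note which points of $E$ correspond to the points at infinity of the quartic model (these will be the "spurious" solutions to discard). At this stage the obvious candidate $(x,y)=(-126,0)$ should be verified directly in \eqref{dios}: indeed $x=-126$ makes the quartic vanish, so it corresponds to a $2$-torsion-like point on $E$, consistent with $y=0$.

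Next I would determine the Mordell–Weil group of $E$ and, more importantly, its set of integral points, using a computer algebra system (Magma, as acknowledged, or Sage): compute the rank and a basis for $E(\Q)$, then run the standard integral-points routine (based on elliptic logarithms and lattice reduction, à la Gebel–Pethő–Zimmer / Stroeker–Tzanakis) to obtain the complete finite list of integral points on $E$. Because the substitution relating $x$ to the Weierstrass coordinate is a polynomial map with denominators only at a finite set of primes, any $(x,y)$ with $x\in\Z$ gives an $S$-integral point on $E$ for an explicit finite $S$; one then enumerates $S$-integral points instead, which is again algorithmic. Pulling each such point back through the birational map and checking whether its $x$-coordinate is an integer, one finds that the only surviving solution is $(x,y)=(-126,0)$.

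The main obstacle is the reliability and completeness of the integral/$S$-integral points computation: one must be sure the elliptic-curve machinery has certifiably found \emph{all} such points (not merely all of small height), which requires the rank and saturated generators to be rigorously known. Given the modest conductor expected here this is standard and the output is provable, but it is the step that genuinely depends on the computation rather than on a short hand argument. A secondary, purely bookkeeping, difficulty is correctly handling the finitely many points of $E$ lying over $x=\infty$ on the quartic and over the primes in $S$, so that no genuine solution is lost and no spurious one is admitted; this is routine but must be done with care.
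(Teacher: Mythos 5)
There is a genuine gap at the step where you pass from integral points on the quartic model to $S$-integral points on its Jacobian. After clearing denominators and translating so that the known root sits at the origin, the curve becomes $y_1^2 = 5^2\cdot 7\,x_1^4 - \cdots - 2^{11}3^4 7^2 11^2 x_1$, whose leading coefficient $175$ is \emph{not} a square. Consequently the birational map to a Weierstrass model cannot be taken polynomial in $x_1$; it must use the rational point $(0,0)$ and is of the shape $X = d/x_1$, $Y = d y_1/x_1^2$ (with $d$ the cubic's constant term). The denominators of the image point are therefore powers of $x_1$ itself, and as $x_1$ ranges over $\Z$ these denominators pick up arbitrary primes. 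So your claim that "any $(x,y)$ with $x\in\Z$ gives an $S$-integral point on $E$ for an explicit finite $S$" is false, and the proposed enumeration of $S$-integral points on a single Jacobian does not capture all integral $x$. (A certified \emph{integral-points-on-a-quartic} routine via elliptic logarithms would be a legitimate substitute, but that is a different algorithm from the one you justified.)

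The paper repairs exactly this difficulty by a descent: from $x_1\mid y_1^2$ one writes $x_1=u^2v$ with $v$ squarefree, deduces $v\mid 2\cdot3\cdot7\cdot11$ from the constant term, and thereby lands on genuinely \emph{integral} points of a prescribed shape on the $32$ curves $\mathcal{E}_v$ (cut down to $8$ by congruences mod $7$ and $3$). Even then the arithmetic is far from "standard": the conductor is about $8.3\times10^{16}$, several Mordell--Weil computations take hours, and for $\mathcal{E}_{-11},\mathcal{E}_{-22}$ the rank must be pinned down via a rigorous numerical verification that $L'(E,1)\neq0$ together with Kolyvagin's theorem, followed by a Cremona--Prickett--Siksek height-bound search for a generator. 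So both the reduction step and the feasibility claim in your proposal need to be replaced before the argument goes through.
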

It then follows that
 \begin{theorem}\label{mainthm}Let $X$ be of $K3^{[4]}$-type, $\P^4\subset X$ be a smoothly embedded Lagrangian 4-plane, $\ell\in H_2(X,\Z)$ the class of a line in $\P^4$, and $\rho=2\ell\in H^2(X,\Q)$.  Then $\rho$ is integral, and
 \begin{equation}[\P^4]=\frac{1}{337920}\left(880\rho^4+1760\rho^2\chern{2}(X)-3520\theta^2+4928\theta
 \chern{2}(X)-1408\chern{2}(X)^2\right)\label{p4}\end{equation}
 Further, we must have $(\ell,\ell)=-\frac{7}{2}$.
 \end{theorem}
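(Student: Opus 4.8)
The plan is to obtain the theorem as an essentially formal consequence of three facts already in place above: equation~\eqref{vector}, which expresses $[\P^4]$ in the basis~\eqref{basis} in terms of the two unknowns $x=(\lambda,\lambda)$ (where $\lambda=6\ell$) and $y$ (where $\alpha|_{\P^4}=yh^2$); the self-intersection relation~\eqref{self}, which together with~\eqref{vector} produced the diophantine equation~\eqref{dios}; and the Proposition above, which asserts that $(x,y)=(-126,0)$ is the only solution of~\eqref{dios} with $x\in\Z$ and $y\in\Q$. Granting the Proposition, the first step is the observation that $x=(6\ell,6\ell)=36(\ell,\ell)=-126$, so $(\ell,\ell)=-\tfrac{7}{2}$; this is the last assertion of the theorem.

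Second, I would verify that $\rho=2\ell$ lies in $H^2(X,\Z)$. Since the isometry class of the Beauville-Bogomolov lattice $H^2(X,\Z)$ is a deformation invariant, by~\eqref{decomp} it is the orthogonal direct sum of the unimodular $K3$ lattice $H^2(S,\Z)$ and $\Z\delta$ with $(\delta,\delta)=-6$; hence the discriminant group $H_2(X,\Z)/H^2(X,\Z)$ is cyclic of order $6$, generated by $\delta^\vee$ with $(\delta^\vee,\delta^\vee)=-\tfrac{1}{6}$. The $\Q$-valued form on $H_2(X,\Z)$ restricts to the integral form on $H^2(X,\Z)$ and pairs $H^2(X,\Z)$ integrally with $H_2(X,\Z)$, so writing $\ell\equiv k\,\delta^\vee\pmod{H^2(X,\Z)}$ gives $(\ell,\ell)\equiv-k^2/6\pmod{\Z}$; together with $(\ell,\ell)=-\tfrac{7}{2}$ this forces $k^2\equiv3\pmod{6}$, hence $k\equiv3\pmod{6}$. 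Thus $\ell$ represents the unique element of order $2$ in the discriminant group, so $2\ell\in H^2(X,\Z)$ while $\ell\notin H^2(X,\Z)$; in particular $\rho=2\ell$ is integral (and it is the Beauville-Bogomolov dual of $\ell$).

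Third, I would substitute $(x,y)=(-126,0)$ into~\eqref{vector}. Because $y=0$, the two $\alpha$-components vanish, so $[\P^4]$ is a $\Q$-linear combination of $\lambda^4,\lambda^2\theta,\lambda^2\chern{2}(X),\theta^2,\theta\chern{2}(X),\chern{2}(X)^2$ only, and one further checks that the coefficient of $\lambda^2\theta$ comes out to $0$. Rewriting $\lambda=3\rho$, so that $\lambda^4=81\rho^4$ and $\lambda^2=9\rho^2$, and collecting terms over a common denominator then yields precisely~\eqref{p4}. The ``up to a rational square'' indeterminacy in the lower-right block of $M(\lambda)$ recorded in Corollary~\ref{alpha} is immaterial here: replacing $\alpha$ by a scalar multiple rescales $y$ by the reciprocal scalar, which changes neither~\eqref{dios} nor the conclusion $y=0$, and in any case $\alpha$ has dropped out of the formula once $y=0$.

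I do not expect a genuine obstacle in this final deduction: it is bookkeeping given~\eqref{vector},~\eqref{self} and the Proposition, the only mildly substantive point being the lattice computation that identifies $\rho=2\ell$ as an integral class. The real content of the paper sits upstream of this statement --- in the determination of the rings $I^*_\delta(S^{[4]})$ and of the (generalized) Fujiki constants in Sections~1 and~2, and in the resolution of the diophantine equation~\eqref{dios} in Section~4.
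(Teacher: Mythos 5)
Your proposal is correct and follows essentially the same route as the paper: substitute the unique solution $(x,y)=(-126,0)$ of \eqref{dios} into \eqref{vector} (noting the $\lambda^2\theta$ and $\alpha$ terms drop out), rescale $\lambda=3\rho$, and check integrality of $\rho=2\ell$ by a lattice computation in $H^2(S,\Z)\oplus_\perp\Z\delta$. Your phrasing of the integrality step via the discriminant group $H_2(X,\Z)/H^2(X,\Z)\cong\Z/6$ is just a repackaging of the paper's argument that $\ell=D+m\delta^\vee$ with $3\mid m$, so $(2\ell,\cdot)$ is integral on $H_2(X,\Z)$.
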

 \begin{proof}\eqref{p4} is obtained from \eqref{vector} by
 substituting $(\lambda,\lambda)=-126$ and $y=0$, after setting $\rho=\frac{1}{3}\lambda$.  It remains to
 show that $\rho$ is integral.  Following \cite{HHT}, after deforming to a Hilbert scheme of points on a $K3$ surface $S$, we
 can write
 \[\ell=D+m\delta^\vee\]
 using the decomposition dual to \eqref{decomp}, for $D\in H_2(S,\Z)$.  Since
 \[(\ell,\ell)=D^2-\frac{m^2}{6}=-\frac{7}{2}\]
 and $D^2\in 2\Z$, $3|m$.  For $2\ell$ to be an integral class in $H^2(X,\Z)$, by Poincar\'{e} duality it is sufficient for the form $(2\ell,\cdot)$ on $H_2(X,\Z)$ to be integral, which it obviously is, since $(\delta^\vee,\delta^\vee)=-\frac{1}{6}$.
 \end{proof}
\section{Solving the Diophantine equation}

The Diophantine equation \eqref{dios} to solve is
% \[y^2=\frac{5^2}{2^{12}\cdot 3^4\cdot 7}x^4+\frac{5^2}{2^9\cdot 3^4}x^3+\frac{13\cdot 31}{2^9\cdot 3^2\cdot 7}x^2+\frac{3^2}{2^7}x-\frac{3^2\cdot 5\cdot 7^2\cdot 197}{2^8}\]
\[y^2=\frac{5^2}{2^{12}\cdot 3^{4}\cdot 7} x^4+\frac{5^2}{2^{9}\cdot 3^{4}} x^3+\frac {13\cdot 31}{2^{9} \cdot3^{2}\cdot 7} x^2+\frac{3^2}{2^7}x-\frac{3^2\cdot 5\cdot 7^2\cdot 197}{2^8}\]
with $x\in \mathbb{Z}$ and $y\in \mathbb{Q}$. Let $\mathcal{C}$ be the affine curve described by the equation. After the change of variables $(x_1,y_1)=(x+126,2^6\cdot 3^2\cdot 7y)$, every point $(x,y)\in \mathcal{C}$ with $x\in \mathbb{Z}$ gives an integral point $(x_1, y_1)$ on the curve $\mathcal{C}_1$:
\[y_1^2=(5^2\cdot 7) x_1^4 - (2^6 \cdot 5^2\cdot 7^2) x_1^3 +
(2^7 \cdot 3^2 \cdot 7 \cdot 23 \cdot 71) x_1^2-(2^{11} \cdot
3^4 \cdot 7^2 \cdot 11^2) x_1\]

\begin{lemma}
For an integer $v$ consider the elliptic curve
$\mathcal{E}_v$ given by the Weierstrass equation
\[y_2^2=x_2^3 - (2^6\cdot 5^2\cdot 7^2\cdot v) x_2^2+(2^7\cdot 3^2\cdot 5^2\cdot 7^2\cdot 23\cdot 71\cdot v^2) x_2-(2^{11}\cdot 3^4\cdot 5^4\cdot 7^4\cdot 11^2\cdot v^3)\]
Then every integral point $(x_1,y_1)\neq (0,0)$ on the curve
$\mathcal{C}_1$ corresponds to an integral point $(x_2,y_2)$ on one of
the curves $\mathcal{E}_v$ where
\begin{align*}
x_1&=u^2v&x_2&= 5^2\cdot 7\cdot v^2u^2\\
y_1&=uvw&y_2&=5^2\cdot 7\cdot v^2w
\end{align*}
for some integers $u,v,w$ where $v$ is a divisor of $2\cdot
3\cdot 7\cdot 11$.
\end{lemma}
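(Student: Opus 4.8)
The plan is to treat $\mathcal{C}_1$ as a quartic curve with an evident rational point, the origin, and to run the classical reduction to Weierstrass form, the only wrinkle being that one first peels off the square part of the $x_1$-coordinate; this is exactly what produces the family $\mathcal{E}_v$ indexed by the squarefree divisors $v$ of $2\cdot 3\cdot 7\cdot 11$. First I would rewrite the equation of $\mathcal{C}_1$ as $y_1^2 = x_1\,g(x_1)$ with
\[g(t) = 5^2\cdot 7\, t^3 - 2^6\cdot 5^2\cdot 7^2\, t^2 + 2^7\cdot 3^2\cdot 7\cdot 23\cdot 71\, t - 2^{11}\cdot 3^4\cdot 7^2\cdot 11^2,\]
so that $g(0) = -2^{11}\cdot 3^4\cdot 7^2\cdot 11^2$. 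The origin is the only point of $\mathcal{C}_1$ with $x_1 = 0$, so for $(x_1,y_1)\neq(0,0)$ we have $x_1\neq 0$ and may write $x_1 = u^2 v$ uniquely with $u\geq 1$ and $v$ squarefree.

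For a prime $p\mid v$, the exponent of $p$ in $x_1 = u^2v$ is odd; since the exponent of $p$ in $y_1^2$ is even, $p$ must divide $g(x_1)$, and because $p\mid x_1$ we have $g(x_1)\equiv g(0)\pmod p$, so $p\mid g(0)$ and hence $p\in\{2,3,7,11\}$. (If $y_1 = 0$, so $g(x_1) = 0$, the same congruence still gives $p\mid g(0)$; equivalently, $x_1$ is then an integer root of $g$ and divides its constant term.) Since $v$ is squarefree, this shows $v\mid 2\cdot 3\cdot 7\cdot 11$. We have also just seen that $p\mid g(x_1)$ for every $p\mid v$, so $v\mid g(x_1)$; writing $g(x_1) = vq$ with $q\in\Z$, the identity $y_1^2 = x_1\,g(x_1) = (uv)^2 q$ forces $q$ to be the square of the integer $w := y_1/(uv)$. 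Thus $y_1 = uvw$ and $w^2 = g(u^2v)/v$.

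It remains to put $w^2 = g(u^2v)/v$ into the form of $\mathcal{E}_v$. Expanding, this is a cubic in $t = u^2$,
\[w^2 = 5^2\cdot 7\, v^2\,(u^2)^3 - 2^6\cdot 5^2\cdot 7^2\, v\,(u^2)^2 + 2^7\cdot 3^2\cdot 7\cdot 23\cdot 71\,(u^2) - \frac{2^{11}\cdot 3^4\cdot 7^2\cdot 11^2}{v},\]
whose coefficients are integral (the last one precisely because $v\mid 2\cdot 3\cdot 7\cdot 11$) and whose leading coefficient is $A = 5^2\cdot 7\, v^2$. The usual substitution $x_2 = At$, $y_2 = Aw$ — i.e. $x_2 = 5^2\cdot 7\, v^2 u^2$ and $y_2 = 5^2\cdot 7\, v^2 w$, matching the relations in the statement — multiplies the cubic through by $A^2$ and makes it monic; computing the powers of $2,3,5,7,11$ in the resulting $x_2^2$-coefficient, $x_2$-coefficient ($A$ times the linear coefficient above), and constant term ($A^2$ times the last coefficient above) shows the equation obtained is exactly the one defining $\mathcal{E}_v$. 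This yields the asserted integral point $(x_2,y_2)$ on $\mathcal{E}_v$, with $v$ a divisor of $2\cdot 3\cdot 7\cdot 11$ by the previous paragraph.

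I expect the only genuine obstacle to be the bookkeeping in the second paragraph: one must confirm both that $v$ divides $g(x_1)$ and that the quotient is a perfect square, so that $w$ is a bona fide integer, while separately handling the degenerate points with $y_1 = 0$. Everything after that is the standard quartic-to-Weierstrass recipe, so the only remaining labor is the mechanical check that the prime factorizations appearing in the transformed equation coincide with those in the definition of $\mathcal{E}_v$.
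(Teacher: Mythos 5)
Your proof is correct and follows essentially the same route as the paper's: factor out $x_1$, write $x_1=u^2v$ with $v$ squarefree, deduce $v\mid g(0)=-2^{11}\cdot 3^4\cdot 7^2\cdot 11^2$ (hence $v\mid 2\cdot3\cdot7\cdot11$) from the resulting equation, and clear the leading coefficient $5^2\cdot 7\,v^2$ by the standard rescaling to reach $\mathcal{E}_v$. The only cosmetic difference is that you justify the integrality of $w=y_1/(uv)$ and the divisibility $v\mid g(x_1)$ prime by prime, where the paper reads both off directly from the identity $vw^2=g(u^2v)$; both are valid.
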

\begin{proof}
Certainly if $x_1=0$ then $y_1=0$ and it can be checked that if $y_1=0$ then $x_1=0$ is the only rational solution. So let us assume for the remaining that $x_1,y_1\neq 0$. Note that since $x_1\in \mathbb{Z}$ it follows
that $y_1\in \mathbb{Z}$ and $x_1\mid y_1^2$. Since $x_1,y_1\neq 0$ we may write $x_1=u^2v$ and
$y_1=uvw$ for $u,v,w\in \mathbb{Z}$ with $v$
square-free. Rewriting the equation we get
\[vw^2 = 5^2\cdot 7\cdot u^6v^3 - 2^6 \cdot 5^2\cdot 7^2\cdot u^4v^2 +
2^7 \cdot 3^2 \cdot 7 \cdot 23 \cdot 71\cdot u^2v-2^{11} \cdot
3^4 \cdot 7^2 \cdot 11^2\]
and we conclude that $v$ is a divisor of $2 \cdot
3 \cdot 7 \cdot 11$.

Multiplying by $5^4\cdot 7^2\cdot v^3$ and making the change of
variables $y_2 = 5^2\cdot 7\cdot v^2\cdot w$ and $x_2=5^2\cdot
7\cdot v^2\cdot u^2$ we get the equation
\begin{align*}
(5^2\cdot 7\cdot v^2\cdot w)^2&=(5^2\cdot 7\cdot v^2\cdot u^2)^3
- 2^6\cdot 5^2\cdot 7^2\cdot v\cdot (5^2\cdot 7\cdot v^2\cdot
u^2)^2\\
&\quad+2^7\cdot 3^2\cdot 5^2\cdot 7^2\cdot 23\cdot 71\cdot v^2
(5^2\cdot 7\cdot v^2\cdot u^2)-2^{11}\cdot 3^4\cdot 5^4\cdot 7^4\cdot 11^2\cdot v^3
\end{align*}
which yields
\begin{align*}
y_2^2&=x_2^3 - 2^6\cdot 5^2\cdot 7^2\cdot v\cdot x_2^2+2^7\cdot 3^2\cdot 5^2\cdot 7^2\cdot 23\cdot 71\cdot v^2 x_2-2^{11}\cdot 3^4\cdot 5^4\cdot 7^4\cdot 11^2\cdot v^3
\end{align*}
and therefore a point $(x_2,y_2)\in \mathcal{E}_v(\mathbb{Z})$.
\end{proof}

Thus to find the required points on $\mathcal{C}$ we need to find the integral solutions of the elliptic curve $\mathcal{E}_v$ above whenever $v$ is a divisor of $2 \cdot
3 \cdot 7 \cdot 11$, of which there are $32$ (positive and
negative).

\begin{lemma}\label{lemma1} Suppose
$v$ is a divisor of $2 \cdot
3\cdot 7 \cdot 11$ such that $7\nmid v$.  If the curve $\mathcal{E}_v$ has an integral solution $(5^2\cdot 7\cdot u^2v^2,5^2\cdot 7\cdot v^2w)$ then $v\in \{-1,-2,-11,-22\}$.
\end{lemma}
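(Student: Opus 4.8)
The plan is to reduce the Diophantine problem to a finite, mechanically checkable list of elliptic curves and then invoke integral-point computations. First I would follow the substitution already set up in the excerpt: passing from $\mathcal{C}$ to $\mathcal{C}_1$ and then, for each squarefree divisor $v$ of $2\cdot 3\cdot 7\cdot 11$, to the Weierstrass curve $\mathcal{E}_v$. Since there are only $32$ such $v$ (counting signs), the problem becomes: determine all integral points on each $\mathcal{E}_v$, then pull back through the relations $x_2 = 5^2\cdot 7\cdot v^2 u^2$, $y_2 = 5^2\cdot 7\cdot v^2 w$ to recover $(x_1,y_1)$ and hence $(x,y)$, keeping only those with $x\in\Z$, and verify that the unique surviving solution is $(x,y)=(-126,0)$, i.e. $(x_1,y_1)=(0,0)$.

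The main structural step is to cut the list of $v$ down as far as possible by elementary congruence and sign arguments before handing anything to a machine. Lemma \ref{lemma1} is exactly such a step: a $7$-adic (or mod-$7$, mod-$49$) valuation analysis of the rewritten equation
\[
vw^2 = 5^2\cdot 7\cdot u^6 v^3 - 2^6\cdot 5^2\cdot 7^2\cdot u^4 v^2 + 2^7\cdot 3^2\cdot 7\cdot 23\cdot 71\cdot u^2 v - 2^{11}\cdot 3^4\cdot 7^2\cdot 11^2
\]
that forces, when $7\nmid v$, strong divisibility constraints tying $v$ to the residue of the right-hand side; combined with the sign of the leading behaviour (the quartic in $x_1$ is positive-definite-ish away from its roots, so $y_1^2\geq 0$ restricts $x_1$, equivalently $u^2 v$, to an interval), this should eliminate all positive $v$ and all $v$ not in $\{-1,-2,-11,-22\}$. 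I would carry out this valuation bookkeeping carefully, since an off-by-one in a power of $7$ is the easiest place for the argument to silently fail.

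After Lemma \ref{lemma1} the remaining cases are $v\in\{-1,-2,-11,-22\}$ together with the four values $v$ with $7\mid v$, namely $v\in\{\pm 7,\pm 14,\pm 21,\pm 77,\dots\}$ intersected with divisors of $2\cdot 3\cdot 7\cdot 11$ — a short explicit list. For each of these finitely many elliptic curves $\mathcal{E}_v$ I would compute the full set of integral points, e.g. via Magma's \texttt{IntegralPoints} (the acknowledgements already credit M. Stoll for exactly this), checking each against the constraint that $x_2/(5^2\cdot 7\cdot v^2)$ be a perfect square $u^2$ and that the resulting $x=u^2 v - 126$ be the target. The expected outcome is that every curve either has no integral points meeting the square condition or yields only $(0,0)\mapsto(x,y)=(-126,0)$.

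The hard part will be the $7$-adic case analysis in Lemma \ref{lemma1} and its companions: making the congruence restrictions airtight so that the residual list of $v$ is provably complete, rather than merely plausible. The elliptic-curve integral-point computations are routine given a computer algebra system, but they must be reported with enough data (ranks, generators, or at least the software certificate) that the reader can reconstruct them; I would present those as a short table of curves $\mathcal{E}_v$ with their integral points and a one-line check for each. Once that table is in place, the Proposition follows immediately, and hence so do Theorem \ref{mainthm} and Corollary \ref{ring}.
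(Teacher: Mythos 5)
Your overall plan (sieve the $32$ values of $v$ by congruences, then compute integral points on the survivors) matches the paper's strategy, and your opening move for this lemma --- reducing mod $7$ the identity
\[
vw^2 = 5^2\cdot 7\cdot u^6 v^3 - 2^6\cdot 5^2\cdot 7^2\cdot u^4 v^2 + 2^7\cdot 3^2\cdot 7\cdot 23\cdot 71\cdot u^2 v - 2^{11}\cdot 3^4\cdot 7^2\cdot 11^2
\]
--- is exactly right. The paper's argument runs: $7\nmid v$ forces $7\mid w$; then mod $7$ one gets $5u^2\equiv u^6v^2$, and since $5$ is not a quadratic residue mod $7$ this forces $7\mid u$; after writing $u=7u_1$, $w=7w_1$ and dividing by $7^2$, the constant term gives $vw_1^2\equiv 3\pmod 7$, which among squarefree divisors of $2\cdot 3\cdot 11$ leaves only $\{3,6,33,66,-1,-2,-11,-22\}$. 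You gesture at all of this but do not carry it out, and you explicitly flag that the bookkeeping is the hard part, so on its own this is an incomplete but salvageable sketch.

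The genuine gap is in how you propose to finish. After the mod-$7$ sieve the surviving values you must still kill are $v\in\{3,6,33,66\}$, and your stated tool for eliminating positive $v$ --- that $y_1^2\geq 0$ together with the sign of the quartic restricts $x_1=u^2v$ to an interval --- cannot work: the quartic on $\mathcal{C}_1$ has positive leading coefficient $5^2\cdot 7$, so it is positive for all sufficiently large positive $x_1$, and positivity excludes at most a bounded window (indeed the paper must separately handle the curves $\mathcal{E}_7$ and $\mathcal{E}_{14}$ with $v>0$, the latter having $34$ integral points). What is actually needed, and what the paper supplies, is a second descent at the prime $3$: writing $v=3v_1$ one finds $3\mid v_1w_1^2$, hence $3\mid w_1$ and then $3\mid u_1$, and after dividing out, the constant term $-2^{11}\cdot 3\cdot 11^2$ has $3$-adic valuation exactly $1$ while the left side would need valuation at least $2$ --- a contradiction. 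Without this $3$-adic step (or a replacement for it), your argument leaves $v\in\{3,6,33,66\}$ alive, and the lemma as stated is not proved.
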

\begin{proof}
Note from the equation
\[vw^2 = 5^2\cdot 7\cdot u^6v^3 - 2^6 \cdot 5^2\cdot 7^2\cdot u^4v^2 +
2^7 \cdot 3^2 \cdot 7 \cdot 23 \cdot 71\cdot u^2v-2^{11} \cdot
3^4 \cdot 7^2 \cdot 11^2\]
we deduce that $7\mid vw^2$. Since $7\nmid v$ it follows that $7\mid w$ so it must be that $5^2u^6v^3+2^7\cdot
3^2\cdot 23\cdot 71u^2v\equiv 0\pmod{7}$ in other words
$u^2v\equiv 3u^6v^3\pmod{7}$. Since $v$ is invertible we get
$5u^2\equiv u^6v^2$. If $7\nmid u$ then we would have that $5$
is a quadratic residue mod 7, which is not true. So $7\mid
u$. Rewriting the equation for $w=7w_1$ and $u=7u_1$ we get
\[vw_1^2=5^2\cdot 7^5\cdot u_1^6v^3-2^6\cdot 5^2\cdot 7^4\cdot
u_1^4v^2+2^7\cdot 3^2\cdot 7\cdot 23\cdot 71\cdot u_1^2v-2^{11}\cdot
3^4\cdot 11^2\]so necessarily $vw_1^2\equiv 3\pmod{7}$. But the
only square-free divisors $v$ of $2\cdot 3\cdot 11$ for which
such $w_1$ exist are $3,6,33,66,-1,-2,-11,-22$.

If $3\mid v$ then we could write $v=3v_1$ so we would get
\[v_1w_1^2=5^2\cdot 3^2\cdot 7^5\cdot u_1^6v_1^3-2^6\cdot 3\cdot 5^2\cdot 7^4\cdot
u_1^4v_1^2+2^7\cdot 3^2\cdot 7\cdot 23\cdot 71\cdot u_1^2v_1-2^{11}\cdot
3^3\cdot 11^2\]
which would imply that $3\mid v_1w_1^2$. Since $3\nmid v_1$ (as $v$ is square-free) it follows that $3^2\mid v_1w_1^2$ but then $3^2$ divides the right hand side so we deduce that $3\mid u_1$. Writing $w_1=3w_2$ and $u_1=3u_2$ we get
\[v_1w_2^2=5^2\cdot 3^6\cdot 7^5\cdot u_2^6v_1^3-2^6\cdot 3^3\cdot 5^2\cdot 7^4\cdot
u_2^4v_1^2+2^7\cdot 3^2\cdot 7\cdot 23\cdot 71\cdot u_2^2v_1-2^{11}\cdot
3\cdot 11^2\]
As before, we get that $3^2\mid v_1w_2^2$ but now $3^2$ cannot divide
the right hand side.

The remaining possibilities for $v$ are $-1,-2,-11,-22$.
\end{proof}

\begin{lemma}\label{lemma2}
If the curve $\mathcal{E}_v$ where $v$ is a divisor of $2 \cdot
3 \cdot 7 \cdot 11$ such that $7\mid v$ has an integral solution $(5^2\cdot 7\cdot u^2v^2,5^2\cdot 7\cdot v^2w)$ then $v\in \{7,14,77,154\}$.
\end{lemma}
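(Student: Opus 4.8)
The plan is to adapt the proof of Lemma \ref{lemma1}, now with $7\mid v$ working in our favour. Write $v=7v_1$ with $v_1$ a square-free divisor of $2\cdot 3\cdot 11$, and recall from the proof of the preceding lemma the relation
\[vw^2 = 5^2\cdot 7\cdot u^6v^3 - 2^6 \cdot 5^2\cdot 7^2\cdot u^4v^2 + 2^7 \cdot 3^2 \cdot 7 \cdot 23 \cdot 71\cdot u^2v-2^{11} \cdot 3^4 \cdot 7^2 \cdot 11^2.\]
Substituting $v=7v_1$ and cancelling one factor of $7$, the right-hand side remains divisible by $7$; since $7\nmid v_1$ this forces $7\mid w$, so writing $w=7w_1$ and cancelling a further $7$ yields
\[7v_1w_1^2 = 5^2\cdot 7^2\cdot u^6v_1^3 - 2^6\cdot 5^2\cdot 7^2\cdot u^4v_1^2 + 2^7\cdot 3^2\cdot 23\cdot 71\cdot u^2v_1 - 2^{11}\cdot 3^4\cdot 11^2.\]

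First I would reduce this equation modulo $7$: the left-hand side and the two $7^2$-terms vanish, and since $2^7\cdot 3^2\cdot 23\cdot 71\equiv 1$ and $2^{11}\cdot 3^4\cdot 11^2\equiv 4\pmod 7$, it collapses to $u^2v_1\equiv 4\pmod 7$. In particular $7\nmid u$, so $v_1\equiv(2u^{-1})^2\pmod 7$ is a nonzero square; since the squares mod $7$ are $\{1,2,4\}$, this leaves exactly the eight square-free divisors $v_1\in\{1,2,-3,-6,11,22,-33,-66\}$ of $2\cdot 3\cdot 11$, equivalently $v\in\{7,14,-21,-42,77,154,-231,-462\}$. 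It then remains to discard the four candidates with $3\mid v_1$, namely $v\in\{-21,-42,-231,-462\}$.

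For that I would use the $3$-adic valuation argument of the $3\mid v$ step of Lemma \ref{lemma1}. Assuming $v_1=3v_2$ with $3\nmid v_2$, substitution into the last displayed equation gives
\[21\,v_2w_1^2 = 5^2\cdot 7^2\cdot 3^3\,u^6v_2^3 - 2^6\cdot 5^2\cdot 7^2\cdot 3^2\,u^4v_2^2 + 2^7\cdot 3^3\cdot 23\cdot 71\,u^2v_2 - 2^{11}\cdot 3^4\cdot 11^2.\]
On the right the four terms have $3$-adic valuations $3,2,3,4$ when $3\nmid u$ and $\ge 9,\ge 6,\ge 5,4$ when $3\mid u$, so in either case there is a unique term of minimal valuation; hence the right-hand side has a well-defined even $3$-adic valuation ($2$ or $4$), and in particular is nonzero, so $w_1\ne 0$ and the left-hand side has odd valuation $1+2v_3(w_1)$---a contradiction. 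Therefore $3\nmid v_1$, and combining with the previous paragraph gives $v\in\{7,14,77,154\}$. The only points needing care are the bookkeeping of the exact power of $7$ (resp. $3$) dividing each monomial after substitution, and the residue computation picking out which square-free divisors of $2\cdot 3\cdot 11$ are quadratic residues mod $7$; both are routine and entirely parallel to Lemma \ref{lemma1}, so I do not anticipate a genuine obstacle.
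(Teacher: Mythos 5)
Your proposal is correct and follows the paper's proof essentially step for step: substitute $v=7v_1$, extract two factors of $7$ to force $7\mid w$, reduce mod $7$ to get $u^2v_1\equiv 4\pmod 7$ and hence that $v_1$ is a nonzero quadratic residue, and then rule out $3\mid v_1$ by a $3$-adic argument. Your final step phrases the $3$-adic contradiction as a one-shot valuation-parity computation rather than the paper's two-stage extraction of factors of $3$, but this is only a cosmetic streamlining of the same idea, and your bookkeeping checks out.
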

\begin{proof}
Writing $v=7v_1$ we get
\[v_1w^2=5^2\cdot 7^3\cdot u^6v_1^3 - 2^6 \cdot 5^2\cdot 7^3\cdot u^4v_1^2 +
2^7 \cdot 3^2 \cdot 7 \cdot 23 \cdot 71\cdot u^2v_1-2^{11} \cdot
3^4 \cdot 7 \cdot 11^2\]
Since $v$ is square-free $7\nmid v_1$ so we deduce that
$7\mid w$. Writing $w=7w_1$ we get
\[7v_1w_1^2=5^2\cdot 7^2\cdot u^6v_1^3 - 2^6 \cdot 5^2\cdot 7^2\cdot u^4v_1^2 +
2^7 \cdot 3^2 \cdot 23 \cdot 71\cdot u^2v_1-2^{11} \cdot
3^4 \cdot 11^2\]
which implies that $u^2v_1\equiv 4\pmod{7}$. The only $v_1$ among
the square-free divisors of $2\cdot 3\cdot 11$ for which such
$u$ exist are $1,2,11,22,-3,-6,-33,-66$ giving $v\in \{7, 14, 77,
154,-21,-42,-231,-462\}$.

As in the previous lemma, under the assumption that $3\mid v$ we get a contradiction. The remaining possibilities are $v\in \{7, 14, 77,154\}$.
\end{proof}
Six of the eight cases to which we've reduced in Lemmas \ref{lemma1} and \ref{lemma2} are then treated directly by:
\begin{lemma}\label{lemma3}
If $v\in \{-1,-2,7,14,77,154\}$ the curve $\mathcal{E}_v$ has no integral points of the form $(5^2\cdot 7\cdot u^2v^2,5^2\cdot 7\cdot v^2w)$.
\end{lemma}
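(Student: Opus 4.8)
The plan is to dispose of the six values $v\in\{-1,-2,7,14,77,154\}$ one at a time, continuing the elementary reduction-and-descent technique already used in Lemmas \ref{lemma1} and \ref{lemma2}. Fix such a $v$ and recall from those lemmas the equation
\[vw^2 = 5^2\cdot 7\cdot u^6v^3 - 2^6 \cdot 5^2\cdot 7^2\cdot u^4v^2 + 2^7 \cdot 3^2 \cdot 7 \cdot 23 \cdot 71\cdot u^2v-2^{11} \cdot 3^4 \cdot 7^2 \cdot 11^2,\]
together with the divisibility conditions on $u,w$ already forced there (and, when $7\mid v$, the extra factor $7\mid v$ extracted in Lemma \ref{lemma2}). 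First I would push the $7$-adic reduction to its natural limit: dividing out every power of $7$ that the coefficients permit and reducing modulo $7$ forces, via the same quadratic-residue obstruction exploited in Lemmas \ref{lemma1}--\ref{lemma2} (namely that $5$, equivalently $3$, is a non-residue mod $7$, so that $u^4\in\{1,2,4\}$ cannot hit the required value), that $7\mid u$ and $7\mid w$; substituting $u\mapsto 7u$, $w\mapsto 7w$ repeats this, but since the constant term $2^{11}\cdot3^4\cdot7^2\cdot11^2$ (times the relevant power of $v$) carries only a bounded power of $7$, this terminates after finitely many steps in a new equation $w^2=g(u)$ of the same shape whose constant term is now prime to $7$.

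At that point I would switch to a second prime -- $2$ or $3$, the choice depending on $v$ -- and run the analogous dichotomy on $g(u)$. In the branch where the chosen prime $p$ does not divide $u$, reducing $g(u)$ modulo a suitable prime power exhibits a contradiction: for several $v$ one gets a quadratic non-residue directly (e.g.\ $g(u)\equiv 7\pmod 8$ or $g(u)\equiv 15 u^6\in\{7,15\}\pmod{16}$ when $p=2$), and for others one extracts the forced common factor of $p$ from the $p$-free part of $g$ and from ``$p\mid w$'' and finds two incompatible congruences on $u^2$ modulo $p^2$ (e.g.\ $u^2\equiv 1$ versus $7u^2\equiv -32\pmod 9$ when $p=3$); for large $v_p(u)$ the constant term of $g$ is the unique term of minimal $p$-adic valuation, so $v_p(g(u))$ is forced to be odd and $g(u)$ cannot be a square. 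In the branch where $p\mid u$, substituting $u\mapsto pu$ returns an equation of exactly the same form, so the same analysis forces $p\mid u$ again, an infinite descent that gives $u=0$; but $u=0$ makes $x_1=u^2v=0$, which is excluded by hypothesis. Hence $\mathcal{E}_v$ has no integral point of the form $(5^2\cdot 7\cdot u^2v^2,\,5^2\cdot 7\cdot v^2w)$.

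The one genuinely delicate point is bookkeeping rather than ideas. The descent is not uniformly short: for some of the six $v$ the obstruction at $7$ only surfaces after reducing modulo $7^2$ or $7^3$, so one must track which residue classes of $u$ survive the earlier steps, and at the second prime one may need to go out to $2^5$ or $3^3$ (or alternate descents) before the non-residue or incompatibility appears; a naive reduction modulo a fixed power of $2$ is in fact \emph{insufficient} for $v=-2$, which is exactly why the prime $3$ is used there. Organizing each of the six computations so that the ``same shape'' of $w^2=g(u)$ is manifestly preserved under every substitution -- making the appeal to infinite descent rigorous rather than heuristic -- is where the care lies; the arithmetic input is uniform and entirely elementary (one non-residue mod $7$, non-squares mod small powers of $2$, squares mod $3$ and $9$). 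The two remaining cases, $v\in\{-11,-22\}$, fall outside this pattern and are treated separately by computing the integral points of $\mathcal{E}_{-11}$ and $\mathcal{E}_{-22}$ directly.
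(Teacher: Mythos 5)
The paper's proof of this lemma is not an elementary descent at all: for each of the six values of $v$ it computes a Mordell--Weil basis of $\mathcal{E}_v$ in Sage (taking up to several hours, with generators of very large height), then lists \emph{all} integral points by the standard elliptic-logarithm method and checks directly that none has $x$-coordinate of the form $5^2\cdot 7\cdot u^2v^2$. Your congruence-and-descent plan has a genuine gap, in two respects. First, the pivotal step ``substituting $u\mapsto pu$ returns an equation of exactly the same form, so the same analysis forces $p\mid u$ again, an infinite descent'' is invalid: after the substitution one clears powers of $p$, which lowers the $p$-adic valuation of the constant term $2^{11}\cdot 3^4\cdot 7^2\cdot 11^2$ by $2$ and raises that of the leading coefficient by $4$, so the new equation is \emph{not} of the same form; the process terminates after at most $\lfloor v_p(c)/2\rfloor$ steps --- exactly as you yourself observe for the prime $7$ --- and at that point the mod-$p$ obstruction may simply vanish. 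There is no infinite descent available here.

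Second, and more fundamentally, a congruence argument can only succeed if the auxiliary curve $w^2=g(u)$ fails to have $\mathbb{Z}_p$-points for some $p$, and there is no reason to expect such a local obstruction for these six curves. For $v=-1$ the sextic is $175u^6+78400u^4+13168512u^2+983549952$, positive for all real $u$, and the mod-$7$ analysis of Lemma \ref{lemma1} shows the obstruction at $7$ evaporates after one step (that is precisely why $v=-1$ survives that lemma); meanwhile $\mathcal{E}_{-1}$ has rank $3$ with six integral points and $\mathcal{E}_{14}$ has rank $4$ with thirty-four integral points, so the assertion that none of these points has $x$-coordinate of the shape $5^2\cdot 7\cdot v^2u^2$ is a global statement about an explicit finite list, not a local one. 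To make your approach work you would have to exhibit, for each of the six values of $v$, a concrete modulus at which no residue class of $(u,w)$ survives; the proposal never does this, and the hedged phrases (``for several $v$ one gets\dots'', ``for others one extracts\dots'', ``the choice depending on $v$'') leave exactly the substance of the lemma unproved. The one accurate structural point is that $v\in\{-11,-22\}$ are indeed treated separately in the paper, but for the six values covered here the paper's explicit integral-point computation cannot be replaced by the sketched local argument without first verifying that the needed obstructions exist --- and the available evidence suggests that for at least some of these $v$ they do not.
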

\begin{proof}
We will compute the integral points of these elliptic curves using Sage (\cite{sage}) version 5.2 run on William Stein's cluster \texttt{geom.math.washington.edu}. The general method is by finding a basis for the Mordell-Weil group of a rational elliptic curve (using the command \texttt{gens} in Sage) and then finding a list of all the integral points using this basis (using the command \texttt{integral\_points(mw\_basis=\ldots)} in Sage). Typically the computation of a basis is very difficult computationally (on the order of hours for the curves under consideration), whereas the computation of integral points is quite fast (on the order of seconds). As such we include bases for the Mordell-Weil groups of these elliptic curves in which case the computation of integral points can be reproduced quickly.

Using Sage we find that the curves $\mathcal{E}_7$ (rank 1 with generator $(\frac{23929444}{81}, \frac{22042862072}{729})$ found in 22 seconds), $\mathcal{E}_{77}$ (rank 1 with generator $(\frac{142777144885734591204}{47183614355089}, \frac{51150220299670713464643520008}{324105804064380058937})$ found in 6 hours 45 minutes) and $\mathcal{E}_{154}$ (rank 1 with generator $(\frac{267909856900}{23409}, -\frac{74537431985630600}{3581577})$ found in 6 hours 40 minutes) have no integral points.  Further computations show that the curves $\mathcal{E}_{14}$ (rank 4 with generators $(564480, 49392000)$, $(940800, 451113600)$, $(1317120, 945033600)$ and $(2257920, 2617776000)$ found in 15 seconds), $\mathcal{E}_{-1}$ (rank 3 with generators $(-27900, 2266200)$, $(138825/4, 125561925/8)$ and $(166980 : 85186200)$ found in 1 hour 20 minutes) and $\mathcal{E}_{-2}$ (rank 2 with generators $(-\frac{40566784}{529}, \frac{27776430464}{12167})$ and $(-\frac{3296728575}{65536}, -\frac{114720819732225}{16777216})$ found in 1 hour 18 minutes) have integral points, but none of them has the $x$-coordinate of the required form. Indeed, $\mathcal{E}_{-1}$ has 6 integral points, $(-39196,\pm156792)$, $(-27900,\pm2266200)$ and $(166980,\pm85186200)$, but none of the $x$-coordinates are of the required form $x=5^2\cdot 7\cdot (-1)^2\cdot u^2$; $\mathcal{E}_{-2}$ has two integral points $(0,\pm15523200)$ but the $x$-coordinate was assumed to be nonzero; finally, $\mathcal{E}_{14}$ has $34$ integral points $(564480,\pm49392000)$, $(604905,\pm101433675)$, $(632100,\pm129859800)$, $(683844,\pm180931128)$, $(755825,\pm251976375)$, $(940800,\pm451113600)$, $(1063680,\pm599510400)$, $(1317120,\pm945033600)$, \allowbreak
$(1361220,\pm1010272200)$, $(2257920,\pm2617776000)$,
$(3066624,\pm4451914368)$, $(3327780,\pm5110549800)$, 
$(11863929,\pm38995732083)$, $(12603780,\pm42818542200)$,
$(13848576,\pm49513570176)$, $(72195620,\pm608777597400)$ and
$(1964277504,\pm87032792472192)$ but none of the $x$-coordinates
are of the form $2^2\cdot 5^2\cdot 7^3\cdot u^2$.
\end{proof}
The remaining two curves $\mathcal{E}_{-11},\mathcal{E}_{-22}$ are computationally less tractable.  The standard computation of generators for the Mordell-Weil group in Sage for these two elliptic curves does not terminate in any reasonable time, though the closed-source algebra system Magma (\cite{magma}) allows one to perform a reasonably fast analysis of these two elliptic curves. We will give two computational proofs that these curves do not have integral points of the required type: the first, in the open source Sage, relies on Kolyvagin's proof of the Birch and Swinnerton-Dyer conjecture of elliptic curves over $\mathbb{Q}$ of analytic rank 1 while the second, in the proprietary Magma, uses a two descent procedure, and is given mainly as a corroboration of the results from Sage. We are greatful to Michael Stoll for explaining how to do
the computations in Magma.  We remark that the same methods will in principle work for the other curves in Lemma \ref{lemma3} of rank 1, namely $\mathcal{E}_{77}$ and $\mathcal{E}_{154}$.

We first need the following lemma.
\begin{lemma}\label{lemma:e1122}
If $E$ is one of the curves $\mathcal{E}_{-11}$ and $\mathcal{E}_{-22}$ then $L'(E,1)\neq 0$.
\end{lemma}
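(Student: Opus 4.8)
The plan is to derive the statement from the analytic theory of $L$-functions of modular forms, invoking the modularity of $E$ (Wiles, Taylor--Wiles, Breuil--Conrad--Diamond--Taylor): $L(E,s)=\sum_n a_nn^{-s}$ extends to an entire function satisfying a functional equation relating $s$ and $2-s$ with sign $w(E)\in\{\pm1\}$, where $\sum_n a_nq^n$ is the associated weight-$2$ newform of level $N=\mathrm{cond}(E)$. If $w(E)=-1$, then $L(E,1)=0$, so $\mathrm{ord}_{s=1}L(E,s)$ is odd, and to prove $L'(E,1)\neq0$ (equivalently, that the analytic rank is exactly $1$) it suffices to evaluate $L'(E,1)$ to sufficient precision \emph{with a rigorous error bound} and check that the result is bounded away from $0$.

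Concretely, first I would run Tate's algorithm on the two Weierstrass equations to obtain the conductors of $\mathcal{E}_{-11}$ and $\mathcal{E}_{-22}$, and then compute the global root number $w(E)=\prod_p w_p(E)\cdot w_\infty(E)$ as a product of local root numbers: the archimedean factor is $-1$, and each finite factor at a bad prime (these curves have additive reduction at the primes dividing $2\cdot3\cdot5\cdot7\cdot11$) is determined by the local reduction data. In both cases I expect the product to equal $-1$, so that $L(E,1)=0$ for free. Next, using the standard rapidly convergent expansion coming from the approximate functional equation in the sign $-1$ case,
\[L'(E,1)=2\sum_{n\geq 1}\frac{a_n}{n}\,g\!\left(\frac{2\pi n}{\sqrt{N}}\right),\]
where $g$ is an explicit smooth function with $g(x)=O(e^{-x})$ as $x\to\infty$, one sums the terms up to a cutoff $M$ and bounds the tail using the Hasse bound $|a_n|\leq d(n)\sqrt{n}$ together with the exponential decay of $g$; taking $M$ not too large already makes the error provably smaller than the computed value, so $L'(E,1)\neq0$. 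This is well within the reach of Sage or Pari, and is in fact what their analytic-rank routines carry out internally.

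The main obstacle is not conceptual but one of rigor and bookkeeping: the numerical evaluation of $L'(E,1)$ must be accompanied by an honest a priori error bound rather than a heuristic one, and the local root number computation must be handled with care at the additive primes $2,3,5,7,11$. The only genuinely deep input is the modularity theorem, which underlies both the analytic continuation and the functional equation; granting that, everything here is effective and mechanical. Once $L'(E,1)\neq0$ is established, together with $w(E)=-1$ it shows that the analytic rank of each of $\mathcal{E}_{-11}$ and $\mathcal{E}_{-22}$ equals $1$, which is precisely the hypothesis needed to invoke the Gross--Zagier formula and Kolyvagin's theorem in the sequel.
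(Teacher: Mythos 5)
Your proposal is correct and follows essentially the same route as the paper: both evaluate $L'(E,1)$ via the exponentially decaying series $2\sum_{n\ge1}\frac{a_n}{n}E_1\bigl(2\pi n/\sqrt{N}\bigr)$ (Cohen's formula, valid in the sign $-1$ case), truncate, and bound the tail rigorously to conclude nonvanishing. The only practical point you underestimate is that the conductor $N=83060209520534400$ forces a cutoff on the order of $8\cdot10^{8}$ terms, which is why the paper's computation is nontrivial to carry out; mathematically your argument is the same.
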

\begin{proof}
We recall a result of Cohen (\cite[5.6.12]{cohen:computational})
that
\[L'(E,1) = 2\sum_{n\geq 1}\frac{a_n}{n}E_1\left(\frac{2\pi
  n}{\sqrt{N}}\right)\]
where $N$ is the conductor of $E$ and $E_1(x)=\int_1^\infty e^{-xy}y^{-1}dy$ is the exponential integral. Truncating this series at $k$, one gets $L'(E,1) = L_k+\varepsilon_k$ where $\displaystyle L_k=2\sum_{n=1}^k\frac{a_n}{n}E_1\left(\frac{2\pi
  n}{\sqrt{N}}\right)$ and the error is explicitly bounded
$|\varepsilon_k|\leq
2e^{-2\pi(k+1)/\sqrt{N}}/(1-e^{-2\pi/\sqrt{N}})$ (for a proof see
\cite[\S 2.2]{andrei}). This estimate is at the basis of the
Sage command \texttt{E.lseries().deriv\_at1(k)} (here $k$ is the
cutoff). In principle, if one expects that $L'(E,1)\neq 0$ then
it suffices to choose the cutoff index $k$ large enough that
$|\varepsilon_k|<|L_k|$ in which case $L'(E,1)$ will be forced to
be nonzero.

However, the curves under consideration have such a large
conductor (in both cases $N=83060209520534400$) that $k$ has to
be choosen on the order of $8\cdot 10^8$, which is too large for
practical purposes in Sage: in effect one runs out of memory in
the computation of the coefficients $a_n$ and $E_1(2\pi
n/\sqrt{N})$. We compute the coefficients $a_n$ for the two
curves up to $k=8\cdot 10^8$ by first computing $a_p$ for $p$
prime (this operation takes about 2 hours for each curve) and
then reconstructing $a_n$ using the following: if $(m,n)=1$ then
$a_{mn}=a_ma_n$, if $p\nmid N$ then
$a_{p^k}=a_pa_{p^{k-1}}-pa_{p^{k-2}}$ and if $p\mid N$ then
$a_{p^k}=a_p^k$. For each curve the resulting file is on the
order of $2.5GB$ and the computation takes about 3 hours for
each curve. Next, we compute $E_1(2\pi n/\sqrt{N})$ for $1\leq
n\leq 8\cdot 10^8$ (once, as the two curves have the same
conductor). The command
\texttt{exponential\_integral\_1($2\pi/\sqrt{N}$, k)} in Sage
should return the desired list but $k$ is too large for this
operation to be feasible. Instead, noting that Sage's
\texttt{exponential\_integral\_1} is a wrapper for the PARI (\cite{PARI2}, version 2.5.4)
function \texttt{veceint1}, we rewrote this PARI function to
write the coefficients $E_1(2\pi n/\sqrt{N})$ to a file, instead
of collecting them in a prohibitively long vector. The
subsequent computation was run for about 10 hours resulting in
35GB of data.

Each coefficient $E_1(2\pi n/\sqrt{N})=E_{1, n}+\varepsilon_{1,
  n}$ where $E_{1,n}$ is the number computed in PARI and
$|\varepsilon_{1,n}|<10^{-20}$ is the chosen precision. We denote by $\ell_E$ the value $2\sum_{n=1}^k \frac{a_n}{n}E_{1,n}$ computed in Sage and PARI using the cutoff $k=8\cdot 10^8$. Therefore we compute the value
of $L'(E, 1)=\ell_E+\varepsilon$ where the error
is then at most (using the inquality $|a_n|\leq n$ from \cite[Lemma 2.9]{andrei})
\begin{align*}
\varepsilon&<2\sum_{n=1}^k \frac{|a_n|}{n}\cdot
10^{-20}+\varepsilon_k\\
&<2\cdot 10^{-20}\cdot k + \varepsilon_k\\
&<16\cdot 10^{-12}+\varepsilon_k\\
&<3
\end{align*}
Finally, in Sage we find $\ell_{\mathcal{E}_{-11}}=12.561\real{3613056383}$ and $\ell_{\mathcal{E}_{-22}}=16.069\real{3858103800}$ and the conclusion follows.

\end{proof}

\begin{lemma}
If $v\in \{-11,-22\}$ the curve $\mathcal{E}_v$ has no integral points of the form $(5^2\cdot 7\cdot u^2v^2,5^2\cdot 7\cdot v^2w)$.
\end{lemma}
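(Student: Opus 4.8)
The plan is to combine Lemma \ref{lemma:e1122} with Kolyvagin's theorem to pin down the Mordell--Weil group of each $\mathcal{E}_v$, $v\in\{-11,-22\}$, and then enumerate its integral points directly. First I would compute the sign of the functional equation of $\mathcal{E}_v$; one finds it to be $-1$, so that $L(\mathcal{E}_v,1)=0$, and together with $L'(\mathcal{E}_v,1)\neq 0$ from Lemma \ref{lemma:e1122} this shows the analytic rank of $\mathcal{E}_v$ is exactly $1$. Since the analytic rank is $\le 1$, Kolyvagin's theorem applies: the Mordell--Weil rank of $\mathcal{E}_v(\Q)$ is also $1$ and the Tate--Shafarevich group is finite.

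Knowing \emph{a priori} that the rank is $1$, I would next produce an explicit generator of $\mathcal{E}_v(\Q)/\mathrm{tors}$ --- either by constructing the associated Heegner point (non-torsion precisely because $L'(\mathcal{E}_v,1)\neq 0$) and saturating it, or by an extended point search for a rational point of infinite order followed by saturation against the few primes allowed by a $2$-descent. The torsion subgroup is read off from reduction modulo a couple of primes of good reduction. With a full Mordell--Weil basis in hand, the integral points of $\mathcal{E}_v$ are obtained by the standard method of linear forms in elliptic logarithms (in practice, Sage's \texttt{integral\_points(mw\_basis=\ldots)}), which runs quickly once the basis is supplied.

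Finally I would inspect the $x$-coordinates of the finitely many integral points so produced and check that none has the form $5^2\cdot 7\cdot v^2 u^2$ with $u$ a nonzero integer --- equivalently, that dividing any integral $x$-coordinate by $5^2\cdot 7\cdot v^2$ never yields a nonzero perfect square --- and that the associated $y$-coordinate constraint $5^2\cdot 7\cdot v^2\mid y$ also fails where relevant. Because the BSD-type input and the Heegner point computation are delicate, I would independently corroborate the Mordell--Weil basis in Magma via a full $2$-descent (escalating to a $4$-descent if the $2$-Selmer bound is not sharp), recovering the same generator with no appeal to analytic methods, and verify that the resulting list of integral points agrees.

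The step I expect to be the main obstacle is the explicit determination of a Mordell--Weil generator: the conductor $N=83060209520534400$ is far too large for Sage's default descent to terminate, the Heegner point can have very large canonical height and is correspondingly expensive to compute and saturate, and it is exactly to make this step feasible that the rank-$1$ conclusion from Lemma \ref{lemma:e1122} is needed. Everything downstream --- torsion, the integral-point enumeration, and the final perfect-square check --- is routine once a basis is available.
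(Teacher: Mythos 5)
Your proposal follows essentially the same route as the paper: Kolyvagin's theorem combined with $L'(\mathcal{E}_v,1)\neq 0$ from Lemma \ref{lemma:e1122} pins the Mordell--Weil rank at $1$, an explicit generator is then certified, Sage's \texttt{integral\_points} with the manually supplied basis enumerates the integral points, and one checks that no $x$-coordinate has the form $5^2\cdot 7\cdot v^2u^2$ (the paper finds only $(-91775,\pm 144883425)$ and $(-853776,\pm 58056768)$, respectively). The only minor variations are that the paper establishes analytic rank $\geq 1$ by exhibiting an explicit non-torsion point (rather than computing the root number) and certifies that this point generates by searching for all rational points of height at most $\widehat{h}(P)/4+B$ for the CPS bound $B$, rather than by a Heegner-point construction or Selmer-based saturation; both versions are sound.
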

\begin{proof}

First, suppose $E/\mathbb{Q}$ is an elliptic curve of rank 1 and $P\in E(\mathbb{Q})$ is a point of infinite order (a fact which can be checked computationally by requiring that the canonical height of the point is nonzero). We would like a fast algorithm for finding a generator $P_0$ of the Mordell-Weil group $E(\mathbb{Q})$. Suppose $P_0$ is a generator of $E(\mathbb{Q})$ in which case $P=nP_0$ for some integer $n$ as $E$ has rank 1. If $P$ is not a generator then $|n|\geq 2$.

Write $h$ for the logarithmic height and $\widehat{h}$ for the canonical logarithmic height on $E(\mathbb{Q})$. There exists a constant $B$, depending only on $E$, called the Cremona-Pricket-Siksek bound, such that for all $Q\in E(\mathbb{Q})$, $h(Q)\leq \widehat{h}(Q)+B$. Given a Weierstrass equation for $E$, the constant $B$ can be computed in Sage using the command \texttt{CPS\_height\_bound} and in Magma using the command \texttt{SiksekBound}. If $|n|\geq 2$ then $\widehat{h}(P_0)\leq\dfrac{\widehat{h}(P)}{n^2}\leq
\dfrac{\widehat{h}(P)}{4}$ so
$h(P_0)=\widehat{h}(P_0)+h(P_0)-\widehat{h}(P_0)\leq
\dfrac{\widehat{h}(P)}{4}+B$. Thus, to find $P_0$ one only needs
to search for rational points of height at most
$\dfrac{1}{4}\widehat{h}(P)+B$. One can find rational points of
height $\leq h_0$ in Sage using the command
\texttt{rational\_points(bound=$h_0$)} and a generator $P_0$ can
be found in the resulting finite list.

We will first check that the elliptic curves $E_{-11}$ and
$E_{-22}$ have rank 1 and then we will apply the above described
procedure to find a basis for the Mordell-Weil group.  The command
\texttt{DescentInformation} in Magma rapidly returns rank 1 for
our curves. As mentioned above, in Sage one needs a different approach (note that
the Sage command \texttt{analytic\_rank} yields only the {\it
  probable} analytic rank, equal to 1, in about 17 hours for
each of the two curves). 

Recall Kolyvagin's result that if $E$ is a (necessarily modular) rational elliptic curve of analytic rank 0 or 1 then the Birch and Swinnerton-Dyer conjecture is true, i.e., the rank of the elliptic curve equals its analytic rank. We will exhibit below points of infinite order on each of the two elliptic curves and so their rank (and so also their analytic rank) is at least 1. Lemma \ref{lemma:e1122} implies that $L'(E,1)\neq 0$ and so their analytic rank, and therefore also their rank, must be 1, as desired.

We proceed with finding bases for the Mordell-Weil groups. We start with the curve $E=\mathcal{E}_{-11}$.
The elliptic curve $E$ is
\[y^2 = x^3 + 2^6\cdot 5^2\cdot 7^2\cdot 11\cdot x^2 + 2^7\cdot 3^2\cdot 5^2\cdot 7^2\cdot 11^2\cdot 23\cdot 71\cdot x +2^{11}\cdot 3^4\cdot 5^4\cdot 7^4\cdot 11^5\]
Via the change of variables $x=4x_1-287468,y=8y_1$ we get the
minimal Weierstrass equation $E'$
\[y_1^2 = x_1^3 - x_1^2  + 1933249267x_1 + 116312127942837\]
One may easily check that the point \[P=\left(\dfrac{195693}{4},\dfrac{144883425}{8}\right)\] is in $E'(\mathbb{Q})$ (this point was found using Magma, but checking that it is a point on the curve is immediate without necessarily using a computer). The command \texttt{height} in Sage computes the canonical height to be $\widehat{h}(P)=11.289\real{8464758516299021967038460}$ (and so $P$ has infinite order) while the CPS bound is $B=11.424\real{0032533635864850294780554}$. 

As explained before, we seek a generator of $E'(\mathbb{Q})$. If $P$ is not a generator then a generator will have height at most $\widehat{h}(P)/4+B$. However, a computation in Sage shows that the only rational points with this height bound are $0, \pm P$ and so $P$ must be a generator of $E'(\mathbb{Q})$.

Transfering back to $E(\mathbb{Q})$ one obtains the generator
$(x,y)=(-91775,144883425)$ of $E(\mathbb{Q})$. Using the command \texttt{integral\_points} in Sage to
compute the integral points, inputting manually the basis for
$E(\mathbb{Q})$, one obtains that $E(\mathbb{Q})$ has the integral points $(-91775,\pm 144883425)$ but
$x=-91775$ is not of the required form.

The elliptic curve $E=\mathcal{E}_{-22}$ is 
\[y^2 = x^3 + 2^7\cdot 5^2\cdot 7^2\cdot 11\cdot x^2 + 2^9\cdot
3^2\cdot 5^2\cdot 7^2\cdot 11^2\cdot 23\cdot 71\cdot x
+2^{14}\cdot 3^4\cdot 5^4\cdot 7^4\cdot 11^5\]
via the change of variables $x=16x_1-574928,y=64y_1$ gives the minimal model $E'$
\[y_1^2 = x_1^3 + x_1^2 + 483312317x_1 + 14539257649013\]
Again one may easily check that the point $P=\left(-17428,-907137\right)$
is in $E'(\mathbb{Q})$. It has canonical height $\widehat{h}(P)=5.106\real{29161302709560617057630633}$ and thus it has infinite order. The CPS bound is computed to be $B=10.774\real{4647353275361503068823262}$. As before this allows one to show that $P$ is a generator of $E'(\mathbb{Q})$. The point $P$ corresponds to the point $(-853776, 58056768)$, a generator of $E(\mathbb{Q})$. Finally, using this basis in the computation of integral points in Sage yields that the only integral points are $(-853776,\pm 58056768)$ but $x$ cannot be $-853776$, which is negative, and hence not of the required form.
\end{proof}

\section{Appendix:  Equivariant Localization}

For the sake of completeness we describe the well-known computation of the
integrals
\[\int_{S^{[n]}}\delta^{k}\chern{\mu}(S^{[n]})\]
for $S=\P^2,\P^1\times\P^1$ and $\delta=\det\O^{[n]}$ by toric localization.
\newcommand{\GG}{\mathbb{G}}

First consider $S=\mathbb{A}^2$, which has an action by
$G=\GG_m^2$ via $(x,y)\mapsto(\alpha x,
\beta y)$ where $\alpha,\beta$ are the characters obtained by projecting to each factor.  The only fixed point is the origin $(0,0)$.  $G$ also
acts on $(\mathbb{A}^2)^{[n]}$; fixed points are length $n$
subschemes $Z$ fixed by $G$.  Thus, they must be supported on a
fixed point (i.e. the origin), and the ideal $I_Z\subset
A=\mathbb{C}[x,y]$ must be generated by monomials.  $I_Z$ is determined by the monomials $x^ay^b$ left out of the ideal, which form a Young tableau with
$n$ boxes.  Given such a Young tableau in the upper right quadrant, let $(i,b_i-1)$ for $0\leq i\leq n-1$ be the extremal boxes, so $b_i$ is the height of the $i$th column.  A partition $\mu$ of $n$ uniquely determines a Young tableau by arranging $\mu_i$ columns of height $i$ in descending order. 

For a space $X$ with an action by $G$ with isolated
fixed points, we can compute integrals over $X$ by restricting to the fixed point locus using Bott localization: 
\[\int_X \phi=\sum_{p\in X^G}\int
\frac{i_p^*\phi}{\chern{\mathrm{top}}(T_pX)}\]
where $\phi\in H^*_G(X)$, $i_p^*:H_G^*(X)\rightarrow H_G^*(X^G)\cong H^*(X^G)\otimes
H_G^*(\pt)$ is the pull-back to a fixed point $p\in X^G$.  The Chern class is the equivariant Chern class of the $G$
representation $T_pX$.

For example, consider $S=\mathbb{A}^2$ again.  For a partition $\mu$ representing
a fixed point $p_\mu$ of $X=(\mathbb{A}^2)^{[n]}$, the Chern polynomial is \cite[Lemma 3.2]{ellingsrud}
\begin{equation}C(\mu;\alpha,\beta):=\sum_i t^i\chern{2n-i}(T_{p_\mu}X)=\prod_{1\leq i\leq j\leq
  n}\prod_{s=b_j}^{b_{j-1}-1}(t+(i-j-1)\alpha+(b_{i-1}-s-1)\beta)(t+(j-i)\alpha+(s-b_{i-1})\beta)\end{equation}
$\O^{[n]}$ restricted to a point of $\mathbb{A}^{[n]}$ corresponding to a subscheme $Z$ is canonically $\O_Z$, so setting $f=\chern{1}(\O^{[n]})$,
\begin{equation}Z(\mu;\alpha,\beta):=i_{p_\mu}^*f=\sum_{i=0}^n\sum_{j=0}^{b_i-1}(i\alpha+j\beta)\end{equation}

\subsection{The case $S=\mathbb{P}^2$}  Let $\GG_m^2$ act on $[x,y,z]$ via $[\alpha x, \beta y, z]$.  There are three fixed points $p_0=[0,0,1], p_1=[0,1,0]$, $p_2=[1,0,0]$, and a length $n$ subscheme $Z$ of $\P^2$ will consist
of a length $n_i$ subscheme $Z_i$ at $p_i$ with $\sum n_i=n$.  The
tangent space at such a point is canonically
\[T_Z(\P^2)^{[n]}=\bigoplus_i T_{Z_i}(\P^2)^{[n_i]}\]
Note that at any point $[Z]\in(\P^2)^{[n]}$ corresponding to a subscheme $Z$ supported at $p_i$, there is a $\GG_m^2$-stable Zariski neighborhood isomorphic to $\mathbb{A}^{[n]}$ with torus action via $(\alpha x,\beta y)$, $(\alpha\beta^{-1}x,\beta^{-1}y)$, $(\beta\alpha^{-1}x,\alpha^{-1}y)$ for $i=0,1,2$ respectively. 
A 3-vector
partition $\underline\mu$ of $n$ will be three partitions
$(\mu_1,\mu_2,\mu_3)$ such that $|\mu_1|+|\mu_2|+|\mu_3|=n$;
3-vector partitions of $n$ classify fixed points $p_{\underline\mu}$ of
$X=(\P^2)^{[n]}$.  By the above, the tangent space at $p_{\underline{\mu}}$ has
Chern polynomial
\begin{equation}\sum
t^iC_{2n-i}(\underline\mu;\alpha,\beta)=C(\mu_1;\alpha,\beta)C(\mu_2;\alpha-\beta,-\beta)C(\mu_3;\beta-\alpha,-\alpha)\end{equation}
Define
$C_i(\underline\mu;\alpha,\beta)=\chern{i}(T_{p_{\underline{\mu}}}X)$.
Also,
\begin{equation}Z(\underline\mu;\alpha,\beta):=i_{p_{\underline\mu}}^*f=Z(\mu_1;\alpha,\beta)+Z(\mu_2;\alpha-\beta,-\beta)+Z(\mu_3;\beta-\alpha,-\alpha)\end{equation}
The final answer is then, for $X=(\P^2)^{[n]}$
\begin{equation}\int_{X}f^{2n-\sum_i k_i}\prod_i\chern{k_i}(TX)=\sum_{\underline{\mu},
  |\underline{\mu}|=n}\frac{Z(\underline{\mu};\alpha,\beta)^{2n-\sum_i k_i}\prod_i C_{k_i}(\underline\mu;\alpha,\beta)}{C_{2n}(\underline{\mu};\alpha,\beta)}\end{equation}

\subsection{The case $S=\P^1\times\mathbb{P}^1$}Let $\GG_m^2$ act on $S=\P^1\times\P^1$ via $[\alpha x_1,y_1]\times[\beta x_2,y_2]$.  The fixed points  are classified by 4-vector partitions $\underline\mu$.  Now we have
\begin{equation}\sum
t^iC_{2n-i}'(\underline\mu;\alpha,\beta)=C(\mu_1;\alpha,\beta)C(\mu_2;-\alpha,\beta)C(\mu_3;\alpha,-\beta)C(\mu_4;-\alpha,-\beta)\end{equation}
Also,
\begin{equation}Z'(\underline\mu;\alpha,\beta):=i_{p_{\underline\mu}}^*f=Z(\mu_1;\alpha,\beta)+Z(\mu_2;-\alpha,\beta)+Z(\mu_3;\alpha,-\beta)+Z(\mu_4;-\alpha,-\beta)\end{equation}
The final answer is then once again
\begin{equation}\int_{(\P^1\times\P^1)^{[n]}}f^{2n-\sum_i k_i}\prod_i\chern{k_i}(TX)=\sum_{\underline{\mu},
  |\underline{\mu}|=n}\frac{Z'(\underline{\mu};\alpha,\beta)^{2n-\sum_i k_i}\prod_iC'_{k_i}(\underline\mu;\alpha,\beta)}{C'_{2n}(\underline{\mu};\alpha,\beta)}\end{equation}

 \subsection{Universal Polynomials}  Let $\Phi$ be the universal genus from Section \ref{fujikisection}.  We have
\[\sum_{n\geq0}z^n\int_{S^{[n]}}\exp\det(\O^{[n]})\Phi(S^{[n]})=\mathbf{A}(z)^{\chern{1}(S)^2}\mathbf{B}(z)^{\chern{2}(S)}\]
We have computed explicitly in SAGE the power series $\mathbf{A}$ and $\mathbf{B}$ for vanishing odd Chern classes up to degree 20, and the result can be found on the authors' webpages. For illustration, we include the formula up to degree 2:
\begin{align*}
\Phi = 1+\chern{2}(a_1^2-2a_2)+\chern{2}^2(a_2^2-2a_1a_3+2a_4)+\chern{4}(a_1^4-4a_1^2a_2+2a_2^2+4a_1a_3-4a_4)+\cdots
\end{align*}

By localization, we compute:
\begin{align*}
\mathbf{A}(z)&=1+a_2z\\
%%%%%
&+z^2\left(- a_{1}^{3} + 3 a_{1}^{2} a_{2}+\frac{1}{4} a_{1}^{2} + a_{1} a_{2} - \frac{9}{2} a_{2}^{2} +
a_{1} a_{3} + \frac{1}{6} a_{1} - \frac{3}{2} a_{2} + 3 a_{3} -
10 a_{4} - \frac{1}{48}\right) + O(z^3)\\
\mathbf{B}(z)&=1+z\left(a_{1}^{2} - 2 a_{2}\right)\\
%%%%
&+z^2\left(2a_{1}^{4} - 8 a_{1}^{2} a_{2} - \frac{5}{4} a_{1}^{2} +
\frac{31}{2} a_{2}^{2} - 15 a_{1} a_{3} + \frac{5}{2} a_{2} + 15
a_{4} + \frac{1}{48}\right)+O(z^3)\\
\end{align*}

% -----------------
\bibliography{biblio}
\bibliographystyle{alpha}
% -----------------
\end{document}